\documentclass[a4paper]{article}
\usepackage{latexsym,amsfonts, amsmath, amssymb, amsthm}
\usepackage[utf8]{inputenc}
\usepackage[final=true,protrusion=true,expansion=true]{microtype}
\usepackage[noadjust]{cite}
\usepackage{enumitem}
\usepackage{color}
\usepackage{empheq}
\usepackage[dvipsnames,svgnames]{xcolor}
\usepackage{mathtools}
\usepackage{bm}
\usepackage{adjustbox}

\usepackage{booktabs}
\usepackage{multirow}

\usepackage{subcaption}
\usepackage[font=scriptsize,labelfont=bf]{caption}
\usepackage[affil-it]{authblk}

\usepackage{geometry}
\usepackage{hyperref}
\hypersetup{
	final,
    colorlinks=true,
    linkcolor=blue,
    filecolor=magenta,
    urlcolor=cyan,
}
\usepackage[nameinlink]{cleveref}
\crefname{subsection}{subsection}{subsections}

\usepackage{stmaryrd}

\usepackage[normalem]{ulem}

\theoremstyle{plain}
\newtheorem{theorem}{Theorem}[section]
\newtheorem{proposition}[theorem]{Proposition}
\newtheorem{lemma}[theorem]{Lemma}
\newtheorem{corollary}[theorem]{Corollary}

\theoremstyle{plain}
\newtheorem{definition}[theorem]{Definition}
\newtheorem{example}[theorem]{Example}
\newtheorem{remark}[theorem]{Remark}
\newtheorem{assumption}[theorem]{Assumption}

\newcommand{\dummy}{\mathord{\color{black!33}\bullet}}

\providecommand{\argmin}{\operatorname*{arg\,min}}  % argument yielding inf
\providecommand{\CB}{{\cal B}}
\providecommand{\CC}{{\cal C}}

\providecommand{\CE}{{\cal E}}

\providecommand{\CI}{{\cal I}}

\providecommand{\CN}{{\cal N}}
\providecommand{\CO}{{\cal O}}
\providecommand{\CP}{{\cal P}}

\providecommand{\CV}{{\cal V}}
\providecommand{\CW}{{\cal W}}

\providecommand{\bbE}{\mathbb{E}}

\providecommand{\bbN}{\mathbb{N}}

\providecommand{\bbP}{\mathbb{P}}

\providecommand{\bbR}{\mathbb{R}}

\providecommand*{\N}[1]{\left\|{#1}\right\|} % Double bar norm
\providecommand*{\Nnormal}[1]{\|{#1}\|} % Double bar norm
\providecommand*{\Nbig}[1]{\big\|{#1}\big\|} % Double bar norm
\newcommand*{\SN}[1]{\left|{#1}\right|}      % Single bar norm
\usepackage{bbm} % Black board math
\usepackage{ifdraft}
\usepackage{arydshln}
\usepackage{subcaption}

\usepackage{mdframed}
\usepackage{xcolor}
\usepackage{soul}

\usepackage{tikz}
\usepackage{scalerel}
\usepackage{tikz-cd}
\usetikzlibrary{arrows,intersections,calc,patterns,angles,quotes}
\usetikzlibrary{shapes.geometric}
\usetikzlibrary{calc}
\usetikzlibrary{shapes,arrows}

\usepackage[textsize=footnotesize]{todonotes}

\newcommand{\tn}{\textnormal}

\newcommand{\overbar}[1]{\mkern 1.5mu\overline{\mkern-1.5mu#1\mkern-1.5mu}\mkern 1.5mu}

\newcommand{\overbarsuperscriptt}[1]{\mkern 1.5mu\overline{\mkern-1.5muX_{#1}\mkern-4mu}\mkern4mu}
\newcommand{\overbarsuperscriptit}[1]{\mkern -1.5mu{\mkern 1.5mu{\mkern-1.5mu\overline{X}^i_{#1}\mkern-1.5mu}\mkern1.5mu}}

\newcommand{\hatsuperscriptit}[1]{\mkern -1.5mu{\mkern 1.5mu{\mkern-1.5mu\widehat{X}^i_{#1}\mkern-1.5mu}\mkern1.5mu}}

\DeclarePairedDelimiter\abs{\lvert}{\rvert}%
\renewcommand{\abs}[1]{\left|{#1}\right|}
\DeclareMathOperator*{\Law}{Law}

\newcommand{\dt}[0]{\Delta t}

\def\R{\mathbb{R}}
\def\RD{\mathbb{R}^D}
\def\Rdl{\mathbb{R}^{d_{\ell}}}
\def\Rd{\mathbb{R}^d}
\def\Rn{\mathbb{R}^n}
\def\B{B} % may be later replaced by \def\B{B^{\infty,}}
\def\intrd{\mathbf{d}}

\newcommand{\Xbar}[1]{\overbarsuperscriptt #1}
\newcommand{\Xibar}[1]{\overbarsuperscriptit #1}

\newcommand{\Xihat}[1]{\hatsuperscriptit {#1}} % for EM-CBO

\newcommand{\globmin}{x^*}

\newcommand{\indivmeasure}[0]{\varrho} %\widetilde\rho already used

\newcommand{\empmeasure}[1]{\widehat\rho_{#1}^N}

\newcommand{\omegaa}[0]{\omega_{\alpha}}

\newcommand{\conspoint}[1]{x_{\alpha}({#1})}
\newcommand{\conspointcoordinate}[1]{x_{\alpha}^{\ell}({#1})}

\def\Vk{\mathcal{V}_k}
\def\CEunder{\underline{\CE}}
\def\CEupper{\overbar{\CE}}

\def\qlk{q_{\ell_k}}
\def\rlk{r_{\ell_k}}

\def\alphazerol{\alpha_{0,\ell}}
\def\qle{q_{\ell,\varepsilon}}
\def\rle{r_{\ell,\varepsilon}}
\def\ple{p_{\ell,\varepsilon}}

\newcommand{\cna}[0]{C_{\tn{NA}}} 
\newcommand{\cmfa}[0]{C_{\tn{MFA}}}
\newcommand{\acc}[0]{\varepsilon_{\mathrm{total}}}

\makeatother

\title{\usefont{OT1}{bch}{b}{n}
	\LARGE Exploiting Structure with\\Anisotropic Consensus-Based Optimization\\
}
\author[Sabrina Bonandin, Konstantin Riedl and Sara Veneruso]{}
\date{}

\begin{document}

\maketitle

% Authors, emails and affiliations
\vspace{-1.8cm}
%\centerline{Sabrina Bonandin$^{*1}$, Konstantin Riedl$^{\dagger 2}$ and Sara Veneruso$^{\ddagger 1,3}$}
\centerline{Sabrina Bonandin$^{1}$, Konstantin Riedl$^{2}$ and Sara Veneruso$^{1,3}$}

\bigskip

{\footnotesize
% Enter the full affiliation and country name:
% Do not insert commas or periods at the end of lines.
 \centerline{$^1$Institute for Geometry and Practical Mathematics, RWTH Aachen University,}
 \centerline{Templergraben 55, 52062 Aachen, Germany}
} 

\medskip

{\footnotesize
% Enter the full affiliation and country name:
% Do not insert commas or periods at the end of lines.
 \centerline{$^2$Mathematical Institute, University of Oxford,}
 \centerline{Radcliffe Observatory, Andrew Wiles Building, Woodstock Rd, Oxford OX2 6GG, United Kingdom}
} 

\medskip

{\footnotesize
 % Enter the full affiliation and country name:
  \centerline{$^3$Department of Mathematics and Computer Science, University of Ferrara,}
   \centerline{Via Machiavelli 30, 44121 Ferrara, Italy}
}

\medskip
{\footnotesize
 % Enter email addresses:
 % \centerline{Email addresses: $^*$\texttt{bonandin@eddy.rwth-aachen.de}, $^\dagger \texttt{konstantin.riedl@maths.ox.ac.uk}$,}
 % \centerline{$^\ddagger$\texttt{sara.veneruso@unife.it}}
 \centerline{Email addresses: \href{mailto:bonandin@eddy.rwth-aachen.de}{\texttt{bonandin@eddy.rwth-aachen.de}}, \href{mailto:konstantin.riedl@maths.ox.ac.uk}{\texttt{konstantin.riedl@maths.ox.ac.uk}},}
 \centerline{\href{mailto:sara.veneruso@unife.it}{\texttt{sara.veneruso@unife.it}}
}

\bigskip

% \author[1]{Sabrina Bonandin\thanks{Email: \texttt{bonandin@eddy.rwth-aachen.de}}}
% \author[2]{Konstantin Riedl\thanks{Email: \texttt{Konstantin.Riedl@maths.ox.ac.uk}}}
% \author[3]{Sara Veneruso\thanks{Email: \texttt{sara.veneruso@unife.it}}}

% \affil[1]{Institute for Geometry and Practical Mathematics, RWTH Aachen University, Templergraben 55, 52062 Aachen, Germany}
% \affil[2]{Mathematical Institute, University of Oxford, Radcliffe Observatory, Andrew Wiles Building, Woodstock Rd, Oxford OX2 6GG, United Kingdom}
% \affil[3]{Department of Mathematics and Computer Science, University of Ferrara, Via Machiavelli 30, 44121 Ferrara, Italy}
% \renewcommand\Authands{ and }

%%%%%%%%%%%%%%%%%%%%%%%%%%%%%%%%%%%%%%%%%%%%%%%%%%
%%%%%%%%%% Abstract %%%%%%%%%%%%%%%%%%%%%%%%%%%%%%
%%%%%%%%%%%%%%%%%%%%%%%%%%%%%%%%%%%%%%%%%%%%%%%%%%
\begin{abstract}
\noindent
Anisotropic consensus‑based optimization (CBO), a multi-agent
metaheuristic derivative-free optimization method, which reliably finds global minima of nonsmooth and nonconvex objective functions while being amenable to a rigorous theoretical analysis, automatically detects and exploits additively separable structures of high-dimensional objective functions.
This enables the algorithm to provably mitigate the curse of dimensionality in scenarios where the objective function decomposes additively into lower-dimensional components.
In this paper, we show the latter property by proving that the computational complexity of anisotropic CBO depends exponentially only on the intrinsic dimension $\intrd$ of the objective function, rather than the ambient dimension $D\gg \intrd$.
Additionally, we demonstrate that rather than depending on a tractability condition of the full energy landscape, the computational complexity depends only on tractability conditions of the lower-dimensional components, 
which allows for a more refined description of the objective function and algorithmic complexity, as the objective function landscape is captured directly on the level of the individual components.
Our results highlight the effectiveness of anisotropic CBO for additively separable objective functions provided that there is sufficient alignment between the structure of the anisotropic noise in the algorithm and the separability structure of the objective itself.
This motivates the design of an enhanced algorithm that learns during the optimization how to most effectively explore the loss landscape by aligning on the fly the noise with the structure of the objective function, which we leave for future research.
Numerical experiments validate our theoretical results accentuating in particular the influence 
of the intrinsic dimensionality $\intrd$ and the level of separability as well as the level of complexity and non-convexity of the objective function within the separable components 
on the performance and computational complexity of the anisotropic CBO algorithm.
\end{abstract}

%%%%%%%%%% Keywords %%%%%%%%%%%%%%%%%%%%%%%%%%%%%%
{\small\noindent{\textbf{Keywords:} high-dimensional optimization, global optimization, derivative-free optimization, additively separable functions, nonsmoothness, nonconvexity, metaheuristics, consensus-based optimization, mean-field limit, Fokker-Planck equations, anisotropic diffusion}}\\

{\small\noindent{\textbf{AMS subject classifications:} 65K10, 90C26, 90C56, 35Q90, 35Q84}}

%\tableofcontents

\normalsize
%%%%%%%%%%%%%%%%%%%%%%%%%%%%%%%%%%%%%%%%%%%%%%%%%%
%%%%%%%%%% Section %%%%%%%%%%%%%%%%%%%%%%%%%%%%%%%
%%%%%%%%%%%%%%%%%%%%%%%%%%%%%%%%%%%%%%%%%%%%%%%%%%
\section{Introduction}
\label{sec:intro}
High-dimensional nonconvex global optimization problems inherently suffer from the curse of dimensionality \cite{bellman1957dynamic,bottou2018optimization}
since the volume of the search space and hence the computational complexity of the optimization problem grows exponentially with the ambient dimension.
At the same time, the no-free-lunch theorem~\cite{wolpert2005coevolutionary}, a well-known cornerstone principle of optimization theory, states that no single algorithm can be superior across all---in theory---possible objective functions.
In particular, without prior structural assumptions, this implies that every optimization algorithm performs, once their performance is averaged across all problems, identical to random guessing.

This result, however, has to be put into perspective, since not all function classes are equally relevant in practical applications,
therefore demonstrating the necessity of exploiting prior information and structure of the objective function landscape: In order to be able to break this worst-case algorithmic barrier and achieve scalable convergence guarantees, an optimization algorithm must explicitly and effectively exploit the intrinsic structure and underlying geometry of the problem class.
% randomised subspace methods

In this work, we reveal how anisotropic consensus-based optimization (CBO)~\cite{carrillo2019consensus,fornasier2021convergence,riedl2024perspective}, a versatile and powerful multi-agent metaheuristic derivative-free optimization method, which has attracted significant recent research interest due to its empirical successes~\cite{carrillo2019consensus, zhang2026proxicbo,borghi2023constrained,beddrich2024constrained,chen2020consensus} and its provable ability~\cite{bonandin2025strong, fornasier2021anisotropic,carrillo2019consensus,riedl2024perspective} to globally minimize nonconvex nonsmooth objective functions,
exploits intrinsic lower-dimensional structure of objective functions.
To do so, we consider the class of additively separable functions $\CE:\RD \to \R$,
where the $D$-dimensional objective~$\CE$ can be decomposed into a sum of $L$ lower-dimensional objective functions~$\CE_\ell:\Rdl \to \R$, $\ell=1,\dots,L$, with disjoint coordinates and $d_\ell\ll D$.
More precisely, an additively separable objective function~$\CE$ has a structure as defined in Assumption~\ref{ass:add_sep}.

\begin{assumption}[additive separability]
\label{ass:add_sep}
The objective function~$\CE:\RD\rightarrow\R$ can be decomposed into a sum
\begin{equation}
    \label{eq:add_sep}
    \CE(x)
    = \sum_{{\ell}=1}^L \CE_{\ell}(\CB^\top_{\ell}x)
\end{equation}
of $L \in \mathbb{N}$ lower-dimensional components~$\CE_{\ell}:\Rdl \to \R$, with $\sum_{{\ell}=1}^L d_{\ell} = D$ and $1 \le d_{\ell} \le D$ for all $\ell \in \{1,\dots,L\}$,
where $\CB_{\ell}^\top:\RD \to \Rdl$ denote suitable coordinate selector maps with $\CB_{\ell}^\top x = x|_{\CI_{\ell}} \in \Rdl$, and $\CI_1, \ldots, \CI_L$ being a partition of $\{1,\dots,D\}$.
\end{assumption}
\noindent Note that $P_{\ell}:\RD \to \RD$ with $P_{\ell} = \CB_{\ell} \CB_{\ell}^\top$ denotes the orthoprojector onto the $d_\ell$-dimensional subspace of $\RD$ spanned by the coordinates in $\CI_{\ell}$, and, by construction, $\#\CI_{\ell} = d_{\ell}$ for any $\ell$, and $1 \le L \le D$. 

This property of additive separability arises naturally in various application areas in science and engineering, whenever systems exhibit a certain amount of modularity, which is typical in practical settings.
Examples where such structures appear range from statistical regression models, generalized additive models~\cite{hastie1986generalized}, and  analysis of variance decomposition (ANOVA)~\cite{wahba1990spline, vapnik1998statistical} in statistics
% see paper \cite{duvenaud2011additive} on Additive gaussian processes for references
to decoupled physical potentials in molecular dynamics~\cite{gandolfi2023molecular}.
%A similar condition is used in \cite[Equation~2.1]{shirokoff2025convergence}
Furthermore, in several problems arising in machine learning, 
including sparse support vector machines, matrix completion, and  graph cuts, 
the objective function~$\CE$ splits into approximately independent components where a overlap over separate parameter clusters is rare \cite[Eq.~(1)]{recht2011hogwild}.
%approximation results of this class of functions constructive approximation / scattered data a approximation: \cite{rieger2024approximability}
% information-theoretic complexity: \cite{krieg2026approximation}
However, let us emphasize that there are certainly equally many problems leading to objective functions not fulfilling Assumption~\ref{ass:add_sep}.
For concrete illustrative examples of functions with this property, we refer the reader to Example~\ref{ex:examples_separability}.

Assumption~\ref{ass:add_sep} naturally gives rise to a new notion of dimensionality, which we call the \emph{intrinsic dimension} and which corresponds to 
\begin{equation}
    \label{def:intrinsic_dim}
    \intrd \coloneqq \max_{\ell \in \{1,\dots,L\}} d_{\ell},
\end{equation}
in contrast to the dimensionality of the search space $D\gg \intrd$, henceforth referred to as the \emph{ambient dimension}.

Throughout this manuscript,
we consider the task of solving the global unconstrained optimization problem 
\begin{equation}
    \label{eq:min_prob}
    \globmin = \argmin_{x\in \RD} \CE(x)
    \quad \text{ with } \quad
    \CEunder \coloneqq \CE(\globmin),
\end{equation}
for a potentially nonconvex nonsmooth and high-dimensional objective function $\CE$ obeying Assumption~\ref{ass:add_sep} on the structure of the objective function landscape.
The optimizer $\globmin $ is assumed to exists and be unique.
The focus of our analysis is on the class of derivative‑free (zero‑order) optimization methods~\cite{conn09intro}, which have recently been shown to be particularly effective in high‑dimensional and multimodal landscapes, such as the one encountered in problem~\eqref{eq:min_prob}, in particular, since in such settings gradient‑based methods may fail to detect globally optimal solutions due to the presence of numerous local minima, a lack of regularity, or unreliable derivative information \cite{back1997handbook,blum2003metaheuristics}.
The family of zero-order optimization methods includes notable examples such as particle‑swarm optimization \cite{kennedy1995particle}, simulated annealing \cite{aarts1989simulated}, genetic algorithms \cite{reeves2010genetic}, as well as consensus‑based optimization (CBO), which constitutes the central optimization method studied in the present work.
Introduced in \cite{pinnau2017consensus}, CBO has emerged as a mathematically grounded alternative and representative within this class of schemes, 
owing to its amenability to rigorous analysis using tools from statistical physics.
CBO methods employ a finite number of \(N\in\bbN^+\) particles (also referred to as agents) to explore the landscape of the objective function \(\CE\).  
Their dynamics combine two complementary mechanisms: on the one hand, a deterministic exploitation process that drives the particles toward regions of the search space containing high‑quality approximations of the solution, and, on the other, a stochastic exploration feature that injects randomness, enabling the particles to explore admissible regions and to escape from local minima or saddle points.
Over time, the particles progressively collapse around a so-called consensus point, which serves as a reliable approximation of the global minimizer $\globmin$ in \eqref{eq:min_prob}.
The update rule of the CBO algorithm consists of a system of $N$ coupled discrete-time equations (see equation~\eqref{eq:aCBO_micro_EM} below). Nevertheless, the analysis of the method typically involves a continuous-time approximation \eqref{eq:aCBO_micro} where the $N$ coupled discrete-time equations are replaced by a system of $N$ coupled continuous-time stochastic differential equations (SDEs) in the limit of vanishing step size, see, e.g., \cite{pinnau2017consensus,carrillo2018analytical}.
An explicit rate of convergence in the step size together with a holistic convergence analysis in expectation was recently proved in~\cite{bonandin2025strong},
and an approach that does not employ such an approximation can be found in~\cite{borghi2024mean}.
A global convergence analysis of CBO can be carried out either in the finite-particle regime (also named the microscopic level), as done, for instance, in \cite{bellavia2025discrete,ha2020convergenceHD,ha2021convergence}, or at the designated mean‑field level, where the number of agents \(N\) is let tend to infinity, and the dynamics are described statistically in terms of the average behavior of a typical agent, as done in \cite{pinnau2017consensus, carrillo2018analytical, fornasier2024consensus, riedl2024perspective,huang2025faithful}, amongst others.  
The relationship between these two regimes has been subject of intensive recent research, and quantitative mean‑field results bridging the finite‑particle and infinite‑particle settings have been established, e.g., in \cite{fornasier2020consensus_hypersurface_wellposedness, fornasier2024consensus, gerber2025mean, gerber2025uniform, kalise2022consensus}.

The adaptability, simplicity, and ease of use~\cite{bailo2024cbx} of CBO have driven its increasing popularity for tackling diverse optimization challenges. A concise overview of selected variants is provided in Subsection~\ref{sec:intro_literatureCBO}.

Let us now present the numerical formulation of the CBO method.
For a fixed number of iterations $H \in \bbN^+$ and a time step size $\dt>0$,
we denote by $\Xihat{h \dt} \in \RD$ the position of agent $i \in \{1,\ldots,N\}$ at time $h \dt$, for $h \in \{ 0, \ldots, H \}$.
For user-specified parameters $\alpha,\lambda,\sigma>0$, the time-discrete evolution of the $i$th agent is defined by the update rule 
\begin{equation}
    \label{eq:aCBO_micro_EM}
        \Xihat{(h+1)\dt} - \Xihat{h\dt} = -\lambda  \big(\Xihat{h\dt} - \conspoint{\empmeasure{h\dt}}\big) \dt + \sigma \sum_{k=1}^D \big(\Xihat{h\dt} - \conspoint{\empmeasure{h\dt}}\big)_k \big(B^{(i)}_{h\dt}\big)_k e_k^D, 
\end{equation}
where $\{ B^{(i)}_{h\dt} \}^{i \in \{1,\dots,N\}}_{h \in \{ 0, \ldots, H \}}$ are independent and identically distributed (i.i.d.) Gaussian random vectors in $\RD$ with zero mean and covariance matrix $\dt I_D$ (in short, $B^{(i)}_{h\dt} \sim \CN_D(0, \dt I_D)$). % compact notation  that will be used in the rest of the manuscript 
The system is complemented with independent initial conditions $\{ \Xihat{0} \}^{i \in \{1,\dots,N\}}$ distributed according to a common law $\rho_0 \in \CP(\RD)$. 
The quantity $\empmeasure{\dummy} \coloneqq \frac{1}{N} \sum_{i=1}^N \delta_{\Xihat{\dummy}}$ is the 
empirical measure associated with the particle system $\{ \Xihat{\dummy} \}^{i \in \{1,\ldots,N\}}_{\dummy}$, and 
$x_{\alpha}$ denotes the consensus point, which is given for any probability distribution $\indivmeasure$ on $\RD$ by
\begin{equation}
    \label{eq:conspoint}
    \conspoint{\indivmeasure} \coloneqq \int_{\RD} x \frac{\omegaa(x)}{\N{\omegaa}_{L^1(\indivmeasure)}} \,d\indivmeasure(x)
    \quad \text{ with }\quad
    \omegaa(x) \coloneqq \exp(-\alpha\CE(x)).
\end{equation}
It can be shown that, by invoking the classical Laplace principle \cite{dembo2009large,miller2006applied}, $\conspoint{\indivmeasure}$ yields an accurate approximation of \(\globmin\) as
\begin{equation*}
    \lim_{\alpha \to \infty} \left( -\frac{1}{\alpha} \log \left( \int_{\RD} \omegaa(x) d\indivmeasure(x) \right) \right) = \inf_{x \in \text{supp}(\indivmeasure)} \CE(x).
\end{equation*}
Equation~\eqref{eq:aCBO_micro_EM} captures the two aforementioned competing mechanisms of exploitation and exploration.  
The exploitation component is governed by the parameter \(\lambda\) and drives each agent toward the weighted point \(x_{\alpha}\).  
The exploration component is controlled by \(\sigma\). In the CBO literature, such stochastic exploration is typically modeled in one of two ways: as isotropic noise, where all coordinates are perturbed equally (as in the initial work~\cite{pinnau2017consensus}), or as  
anisotropic noise (as proposed in \cite{carrillo2021consensus}) to improve the scalability and performance of CBO in high‑dimensional settings.
The latter is the type employed in Equation~\eqref{eq:aCBO_micro_EM}.

\subsection{Contributions and main result}
\label{sec:intro_contributions}

To explain the recent successes of CBO with anisotropic noise in tackling high‑dimensional optimization problems,
we investigate in this paper how the anisotropic CBO algorithm mitigates the curse of dimensionality by exploiting intrinsic structure of the objective function in form of additive separability.
Our computational complexity analysis identifies two key structural ingredients that anisotropic CBO is able to exploit.

\begin{enumerate}
    \item A structured energy landscape $\CE$, which is additively separable as specified by Assumption \ref{ass:add_sep}, with a lower intrinsic dimensionality $\intrd$ (as defined in \eqref{def:intrinsic_dim}). This allows the algorithm to automatically separate the optimization problem into its lower-dimensional components, see Figure~\ref{fig:heatmaps_cbo_ANISO8D_intro}.
    \item Component-wise specified geometries of the energy landscapes around the respective components of the global minimizer $\globmin$. This allows a more finely tuned selection of hyperparameters, enabled by our analysis which captures differently complex objective function landscapes in different components, see Figure~\ref{fig:heatmaps_cbo_ANISO4D_cD_intro}.
\end{enumerate}

Let us now present an informal, intuitive formulation of our main result, whose fully rigorous version is deferred to Theorem~\ref{thm:conv_dlg1_Vk_micro} and Corollary~\ref{thm:conv_dlg1_Vk_micro_epstot}, which can be found in Subsection~\ref{sec:mainresults_convmicro}.

\begin{theorem}
\label{thm:conv_dlg1_Vk_micro_nonrigorous}
Let $\CE:\RD\rightarrow\R$ be additively separable as in Assumption~\ref{ass:add_sep}, fulfill Assumption~\ref{ass:well-posedness_E}, and let $\CE_\ell$ satisfy Assumption~\ref{ass:invcont_QLPl2} for all $\ell \in \{ 1, \ldots, L \}$.
Let $\rho_0 \in \CP_6(\Rd)$ be such that $\globmin \in \tn{supp}(\rho_0)$. Moreover, suppose that the measures $\rho^\ell_0\coloneqq\CB_{\ell}^\top\#\rho_0$, $\ell \in \{ 1, \ldots, L \}$, are statistically independent.
Let $\acc > 0$ be a prescribed accuracy and fix $\delta_1 \in (0,1-(1/2)^{1/L})$ and $\delta_2 \in (0,1/2)$.
Then, there exist parameter choices $\alpha_0$, $N$, $\dt$ and $H$ such that for all $\alpha > \alpha_0$ it holds that
\begin{equation*}
        \N{\frac{1}{N} \sum_{i=1}^N \widehat{X}^{i}_{H\dt} - \globmin}_{\infty,D} \le \acc
\end{equation*}
with probability at least $(1-\delta_1)^L-\delta_2$, 
% presentation of process after so that I have introduced delta t and H
where $\{\Xihat{h \dt}\}_{h\in \{0,\ldots,H\}}^{i\in\{1,\ldots,N\}}$ denote 
the iterates generated by the CBO algorithm \eqref{eq:aCBO_micro_EM} initialized according to $\Xihat{0}\sim\rho_0$.
This can be accomplished by
     \begin{enumerate}
     %[label=(\roman*)]
        \item[(i)] choosing $\alpha_0 = \max_{\ell \in \{1,\ldots,L\}} \alphazerol >0$ with $\alphazerol=\alphazerol(\varepsilon^{-1}) >0$ as specified in~\eqref{eq:alphazerol_simplified}, where $\varepsilon^{-1}\propto\intrd/(\acc^2\,\delta_1)$;\\ (Hence, $\alphazerol$ depends on the intrinsic dimension $\intrd$ and the constants of Assumption \ref{ass:invcont_QLPl2},  
        but is independent of both $D$ and the parameters appearing in the regularity assumptions imposed on the energy functional $\CE$.)
        \item[(ii)] fixing
        $N$ and $\dt$, so that $N$ grows polynomially as 
        $\cmfa/(\acc^{2}\,\delta_2)$,
        while $\dt$ scales linearly with  $\acc^2\,\delta_2/\cna$, and selecting $H = \left\lceil \frac{T}{\dt}\right\rceil$ for a suitable time horizon $T>0$ dependent on $\rho_0$ and $\intrd/(\acc^2\,\delta_1)$.\\ (Here, $\cna,\cmfa>0$ are positive constants that depend only on the quantities appearing in Assumption~\ref{ass:well-posedness_E}, on $\rho_0$, and exponentially on $\alpha$ and $H\dt$. Moreover, $\cna$ depends linearly on the ambient dimension $D$.)
    \end{enumerate}

\end{theorem}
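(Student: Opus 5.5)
The plan is the standard three-step error decomposition used for anisotropic CBO, with the separable structure exploited only in the mean-field step. Let $\rho_t$ be the law of the mean-field (McKean--Vlasov) process $\Xbar{t}$ associated with the continuous-time dynamics \eqref{eq:aCBO_micro}, and let $\{\Xibar{t}\}$ be $N$ i.i.d.\ copies driven by the same Brownian motions as the particles. At $T=H\dt$ we split
\begin{equation*}
\N{\frac{1}{N}\sum_{i=1}^N \Xihat{H\dt}-\globmin}_{\infty,D}
\le \N{\frac{1}{N}\sum_{i=1}^N \big(\Xihat{H\dt}-\Xibar{T}\big)}_{\infty,D}
+\N{\frac{1}{N}\sum_{i=1}^N \Xibar{T}-\bbE[\Xbar{T}]}_{\infty,D}
+\N{\bbE[\Xbar{T}]-\globmin}_{\infty,D}.
\end{equation*}
The first term is handled by the strong Euler--Maruyama error of \cite{bonandin2025strong} combined with a quantitative mean-field approximation. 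Together these give a bound of order $\cna\dt+\cmfa/N$ in mean square, on the event of bounded moments provided by Assumption~\ref{ass:well-posedness_E} and $\rho_0\in\CP_6$. The second term is a law-of-large-numbers fluctuation of order $1/N$ in mean square. Markov/Chebyshev then makes the sum of the first two at most $\acc/2$ with probability at least $1-\delta_2$. This is exactly the choice $N\gtrsim\cmfa/(\acc^2\delta_2)$ and $\dt\lesssim\acc^2\delta_2/\cna$. The factor $D$ in $\cna$ enters from summing the anisotropic noise coordinatewise.

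The core is the third term, where separability is used. The anisotropic noise acts coordinatewise. Under Assumption~\ref{ass:add_sep}, the weight factorizes as $\omegaa(x)=\prod_\ell \exp(-\alpha\CE_\ell(\CB_\ell^\top x))$. We first show that if $\rho_t$ is a product of its marginals $\rho_t^\ell=\CB_\ell^\top\#\rho_t$, then $\CB_\ell^\top\conspoint{\rho_t}=x_\alpha(\rho^\ell_t)$, computed with $\CE_\ell$ alone. Hence the projected mean-field dynamics $\CB_\ell^\top\Xbar{t}$ are themselves $d_\ell$-dimensional anisotropic CBO dynamics for $\CE_\ell$. A uniqueness argument for the nonlinear Fokker--Planck equation then propagates the product structure from the independent initial marginals $\rho_0^\ell$ to all $t$. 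We need to check that the product of the solutions of the $L$ decoupled equations solves the full equation, and then invoke uniqueness.

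On each component we apply the single-objective anisotropic mean-field convergence result. We use the functional $\Vk$ (the variance-type energy $\tfrac12\N{x-\globmin_\ell}^2$ integrated against $\rho^\ell_t$) together with the quantitative Laplace principle under Assumption~\ref{ass:invcont_QLPl2}. This gives $\mathcal V(\rho^\ell_T)\le\varepsilon$ for all $\alpha>\alphazerol(\varepsilon^{-1})$ and for $T$ of order $\log(\mathcal V(\rho_0^\ell)/\varepsilon)$. We then choose $\varepsilon\propto\acc^2\delta_1/\intrd$ and convert the variance bound into an $\infty$-norm bound on $\CB_\ell^\top\bbE[\Xbar{T}]-\globmin_\ell$. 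The $\delta_1$ and the product $(1-\delta_1)^L$ suggest the statement is really proved per component in probability: on each block a Chebyshev-type event of probability $1-\delta_1$ holds, independence across blocks multiplies these probabilities, and a union bound with the $\delta_2$ event gives $(1-\delta_1)^L-\delta_2$.

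We take $\alpha_0=\max_\ell\alphazerol$ and $T=\max_\ell T_\ell$. The main obstacle is the factorization/propagation-of-independence step, and in particular making the $\ell$-wise Laplace estimates independent of $D$ and of the regularity constants of $\CE$. Only the components' constants and $d_\ell\le\intrd$ should enter. We must also ensure that the $\infty$-norm is what appears, so that no $\sqrt{D}$ factor is picked up when combining blocks.
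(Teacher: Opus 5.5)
Your overall structure follows the paper: you charge the particle and time-discretization error to $\delta_2$ via the strong mean-square estimate, split the mean-field dynamics into the $L$ block dynamics, and use block-wise constants to get $\alpha_0=\max_\ell\alpha_{0,\ell}$. Proving the separation by showing that $\sum_\ell\CB_\ell\overline{X}^\ell_t$ solves the full equation (Lemma~\ref{lem:sepmeasure_implies_sepcons}) and then invoking uniqueness is a legitimate variant of the paper's coupling-plus-Gr\"onwall argument. The mean-field step, however, fails as written.

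The mean-field convergence theorem applied to $\CE_\ell$ does not give $\mathcal V(\rho^\ell_T)\le\varepsilon$ for all sufficiently large $T$. It gives a hitting time $T_{\alpha,\ell}\in[\xi T^*_\ell,T^*_\ell]$, with $\xi=(1-\vartheta)/(1+\vartheta/2)$, and exponential decay only on $[0,T_{\alpha,\ell}]$. The reason is that the bootstrap controlling the consensus point (quantitative Laplace principle plus the mass lower bound) only runs while the energy exceeds $\varepsilon$; after that time the theorem provides no bound at all. The windows differ across blocks, since they depend on the initial block energies, and the hitting times are not known. So with $T=\max_\ell T_\ell$, every block that hits earlier is evaluated outside its controlled window. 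You need a common final time lying below every block's guaranteed lower window end, and at that time you must use the decay estimate rather than the hitting statement, recalibrating the accuracy accordingly.

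The paper avoids per-block stopping times altogether. It runs one Lyapunov argument for $\CV_\infty=\max_k\CV_k$ on the full law. Separability enters only through Proposition~\ref{prop:QLPl_Vk} and the mass lower bound for the marginal $\rho^{\ell_k}_t$ in dimension $d_{\ell_k}$, so a single $T_\alpha$ serves all blocks. It then sets $H\dt=\xi T^*\le T_\alpha$ so that \eqref{eq:upperbound_functional} applies at the final time; this is why \eqref{eq:def_varepsilon} carries the exponent $(1+\vartheta/2)/(1-\vartheta)$.

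Second, your probability bookkeeping does not match your decomposition. In your three-term split the last term is deterministic, $\|\bbE\overline{X}_T-\globmin\|_{\infty,D}\le\sqrt{2\CV_\infty(\rho_T)}$. The fluctuation $\frac1N\sum_i\overline{X}^i_T-\bbE\overline{X}_T$ is already charged to $\delta_2$. So there is no block-wise event of probability $1-\delta_1$ anywhere in your argument, and block independence is not used in the probability estimate. The mechanism you guess belongs to the paper's two-term split. There the mean-field average is compared with $\globmin$ directly, and for each block Markov and Jensen are applied with no $1/N$ gain:
\[
\bbP\Bigl(\Bigl\|\frac1N\sum_{i=1}^N\CB_\ell^\top\bigl(\overline{X}^i_{H\dt}-\globmin\bigr)\Bigr\|_{\infty,d_\ell}>\frac{\acc}{2}\Bigr)
\le\frac{8}{\acc^2}\sum_{k\in\CI_\ell}\CV_k(\rho_{H\dt})
\le\frac{8d_\ell}{\acc^2}\,\CV_\infty(\rho_{H\dt}).
\]
Independence of the block averages across $\ell$, which follows from Theorem~\ref{thm:separation_anisotropicCBO_dynamics}, then turns the joint event into a product. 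Requiring each factor to be at least $1-\delta_1$ is exactly what forces $\varepsilon^{-1}\propto\intrd/(\acc^2\delta_1)$.

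If you carried your own split through consistently, you would only need $\CV_\infty(\rho_T)\le\acc^2/8$ for the bias term. The fluctuation, of mean-square size $\mathrm{tr}\,\mathrm{Cov}(\rho_T)/N$, would be absorbed into the choice of $N$. That gives success probability at least $1-\delta_2$, which implies the statement. As written, though, the factor $(1-\delta_1)^L$ is asserted rather than derived.
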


\begin{remark}[Scaling of $\alpha_0$]
\label{rem:crucial_remark}
In order to fully appreciate how our main result, Theorem~\ref{thm:conv_dlg1_Vk_micro_nonrigorous}, exploits the separability of the objective function from Assumption~\ref{ass:add_sep} to mitigate the curse of dimensionality, let us now provide a more detailed description of the structure of $\alpha_0=\max_{\ell \in \{1,\ldots,L\}} \alphazerol$ and the dependencies of $\alphazerol$. 
Proceeding as in \cite[Remark~4.8]{fornasier2024consensus}, we consider a simplified setting that yields an informative expression for $\alphazerol$.
As noted in the cited work, extracting general quantitative information from the explicit formula of $\alphazerol$ in \eqref{def:alpha0l} is considerably more challenging.
Hence, let us assume that $\CE_\ell$ is locally $\Lambda_\ell$-Lipschitz continuous in a $d_\ell$-dimensional ball $\B^{d_\ell}_{R_\ell}(\CB^\top_{\ell}\globmin)$ for some radius $R_\ell>0$, and that there exists a constant $\eta_\ell>0$ such that 
\begin{equation*}
    \N{v-\CB_\ell^\top \globmin}_{\infty,d_\ell} \le \frac{1}{\eta_\ell} \left( \CE_\ell(v)-\CEunder_\ell \right)^{1/2} \quad \text{for all } v \in \Rdl, \quad \text{with } \; \CEunder_{\ell} \coloneqq \inf_{v \in \Rdl}\CE_{\ell}(v).
\end{equation*}
The above condition corresponds to Assumption~\ref{ass:invcont_QLPl2} satisfied globally with exponent $1/2$, and 
ensures that $\CE_\ell$ grows proportionally to $v \mapsto \N{v-\CB^\top_{\ell}\globmin}_{\infty,d_\ell}^2$ with the parameter $\eta_\ell$ regulating the rate of growth (the smaller $\eta_\ell$, the faster the function grows).
Under these two assumptions on $\CE_\ell$, the form of $\alphazerol$ reduces to 
\begin{equation}
\label{eq:alphazerol_simplified}
    \alphazerol = \frac{-8}{c^2\eta_\ell^2\varepsilon} \log \left( c \rho_0^\ell( \B^{d_\ell}_{\min\{R_\ell,c^2\eta_\ell^2\varepsilon/(8\Lambda_\ell)\}}(\CB^\top_{\ell}\globmin)) \right)
\end{equation}
for some constant $c = c(\lambda,\sigma)$ and with $\varepsilon^{-1}\propto\intrd/(\acc^2\,\delta_1)$ (see~\eqref{eq:def_varepsilon} in Corollary~\ref{thm:conv_dlg1_Vk_micro_epstot} for its rigorous definition).
The expression of $\alphazerol$ in~\eqref{eq:alphazerol_simplified} clarifies how anisotropic CBO  leverages the intrinsic structure of the optimization problem, as summarized right before Theorem~\ref{thm:conv_dlg1_Vk_micro_nonrigorous}.
\begin{enumerate}
    \item First, under the assumption that $\rho_0^\ell$ is equivalent to the Lebesgue measure, it holds $\alphazerol \sim \intrd d_\ell \log(\intrd)$.
    To see this, note that the logarithmic term in \eqref{eq:alphazerol_simplified} behaves as $\log(\rho_0^\ell(B^{d_\ell}_{\varepsilon})) = \log(\varepsilon^{d_\ell})=d_\ell \log(\varepsilon)$, where $\varepsilon^{-1}\propto\intrd/(\acc^2\,\delta_1)$. Taking the maximum over $\ell \in \{1,\ldots,L\}$ shows that $\alpha_0$ behaves as $\CO(\intrd^2 \log(\intrd))$, i.e., grows with the intrinsic dimension $\intrd$, but remains independent of the ambient dimension $D$.
    Hence, Theorem~\ref{thm:conv_dlg1_Vk_micro_nonrigorous} exploits the lower dimensional structure associated with the structured energy landscape $\CE$.
    We refer to Figure~\ref{fig:heatmaps_cbo_ANISO8D_intro} for numerical experiments capturing this behavior.
    \item Second, it holds $\alphazerol \propto d_{\ell} \frac{1}{\eta_\ell^2} \max \left\{ \log\left(\frac{1}{R_\ell}\right),\log\left(\frac{\Lambda_\ell}{\eta_\ell^2}\right)\right\}$. Taking the maximum over $\ell\in\{1,\dots,L\}$ shows that $\alpha_0$ is determined by the geometric properties of the objective function landscapes of the individual components together with their respective dimensionality. To be precise, it is determined by the worst among the components, where $\alphazerol$ is largest. Hence, Theorem~\ref{thm:conv_dlg1_Vk_micro_nonrigorous} exploits component-wise specified geometries of the energy landscape of the components \(\CE_{\ell}\) rather than the full high‑dimensional objective function $\CE$, leveraging the underlying structure on the local level, thereby complementing the first item.
    We refer to Figure~\ref{fig:heatmaps_cbo_ANISO4D_cD_intro} for numerical experiments capturing this behavior.
\end{enumerate}
\end{remark}

Theorem~\ref{thm:conv_dlg1_Vk_micro_nonrigorous} estimates the $\ell^{\infty}$-norm between the final iterate of the implementable CBO scheme~\eqref{eq:aCBO_micro_EM} and the global minimizer $\globmin$ in high probability.
The bound holds for any hyperparameter $\alpha>\alpha_{0}$, where $\alpha_0$ depends in particular on the intrinsic dimensionality $\intrd$ and the constants capturing the tractability properties of the energy landscapes of the individual component functions $\CE_\ell$, $\ell \in \{1,\ldots,L\}$, appearing in the assumptions on $\CE_\ell$, as highlighted in Remark~\ref{rem:crucial_remark}.
Furthermore, the bound allows to derive explicit choices of the required number of particles $N$, the time step size $\Delta t$, and the total number of iterations $H$.

To the best of our knowledge, this work is the first to show that anisotropic CBO provably mitigates the curse of dimensionality by exploiting certain structures of high-dimensional objective functions.
\begin{itemize}
\item 
    \cite[Theorem~2]{fornasier2021convergence} and 
    \cite[Theorem~3.6]{riedl2024perspective} present 
    global convergence results for the mean-field formulation~\eqref{eq:aCBO_mf} of \eqref{eq:aCBO_micro_EM} for general objective functions $\CE$, both for isotropic and anisotropic noise, respectively.
    As we later discuss in Remarks~\ref{rem:implications_ICP} and~\ref{rem:proof_Vk_dlg1_specialcases}, such analysis corresponds to our special scenario $L=1$, or, equivalently, $\intrd=D$ (see also Remark \ref{rem:notablechoicesL}). In this setting, $\alpha_0$ grows with the ambient dimension $D$. Consequently, optimization with CBO is subject to the curse of dimensionality in the absence of structural assumptions on $\CE$. 
    The aforementioned convergence results are valid under a global tractability condition on $\CE$. As we will show later in Lemma~\ref{lem:assEl_onE_ICP_general} of Subsection~\ref{sec:relEandEl_relICP}, the tractability conditions imposed on $\CE_\ell$ imply those on $\CE$ with worst‑case constants. Consequently, even if the objective $\CE$ behaves well in every component except one, this favorable behavior is not captured by the globally coarser conditions on $\CE$ (see also the discussion preceding Lemma~\ref{lem:assEl_onE_ICP_general} and Example~\ref{ex:ICP_worstcase}). Our Theorem~\ref{thm:conv_dlg1_Vk_micro_nonrigorous} therefore provides a richer and more precise description when a priori knowledge about the structure of the energy landscape is available.
    \item In \cite[Theorem 1.1]{bonandin2025strong} and~\cite[Corollary 1.2]{bonandin2025strong}, strong mean‑square convergence of the practical, time‑discrete CBO algorithm to the global minimizer was established for both isotropic and anisotropic noise, with explicit convergence rates in the time step size $\dt$ and the number of particles $N$. Building on this result, our Theorem~\ref{thm:conv_dlg1_Vk_micro} bounds in high probability the $\ell^{\infty}$-error under more specific assumptions on the objective function and for better-suited hyperparameters that exploit the intrinsic low‑dimensional structure from Assumption~\ref{ass:add_sep}.
    Our Corollary~\ref{thm:conv_dlg1_Vk_micro_epstot} provides practical guidelines for choosing the hyperparameters $\alpha$, $N$, $\dt$ and $H$ so that a prescribed total accuracy $\acc$ is achieved when a priori structural information about the cost function is available. 
\end{itemize}

These theoretical insights are supported by numerical evidence.
We depict success heatmaps for different values of the hyperparameter $\alpha$ and the required number of particles $N$ when the anisotropic CBO algorithm \eqref{eq:aCBO_micro_EM} is used to minimize an $8$-dimensional ($D=8$) function whose separability deteriorates progressively from left to right (Figure~\ref{fig:heatmaps_cbo_ANISO8D_intro}) or whose level of non-convexity, and hence complexity, increases progressively in a prescribed component from left to right (Figure~\ref{fig:heatmaps_cbo_ANISO4D_cD_intro}).

In Figure~\ref{fig:heatmaps_cbo_ANISO8D_intro}, we observe a degradation in the success rates as the function becomes less separable (decrease of $L$ from $L=8$ to $L=2$, or, equivalently, increase in $\intrd$ from $\intrd=1$ to $\intrd=7$), indicating that anisotropic CBO, which operates along the canonical directions of the Cartesian coordinates, is most effective when the separability structure of the objective is aligned with them.
    \begin{figure}[htbp]
        \centering
        % Subfigure 1
        \begin{subfigure}[b]{0.3\textwidth}
            \centering
            \includegraphics[width=\textwidth]{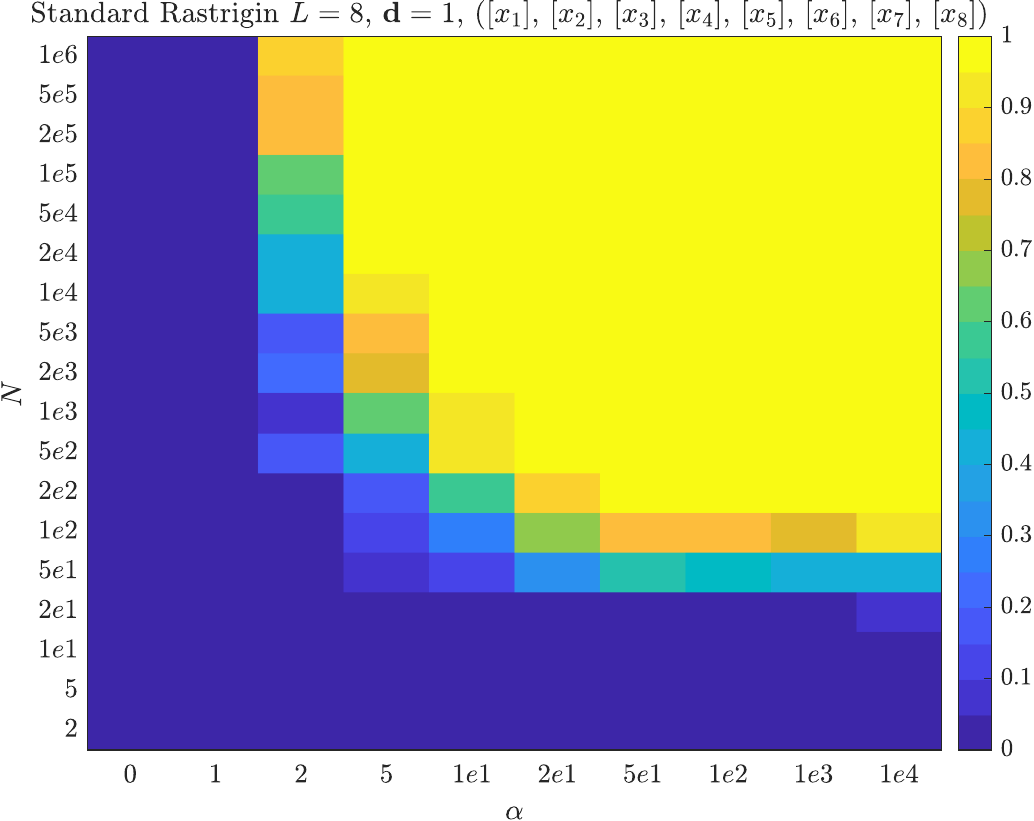}
            \caption{$\mathbf{d}=1$}
            \label{fig:heatmap_rastr8D_anisod1_intro}
        \end{subfigure}
        \hfill
        % Subfigure 2 
        \begin{subfigure}[b]{0.3\textwidth}
            \centering
            \includegraphics[width=\textwidth]{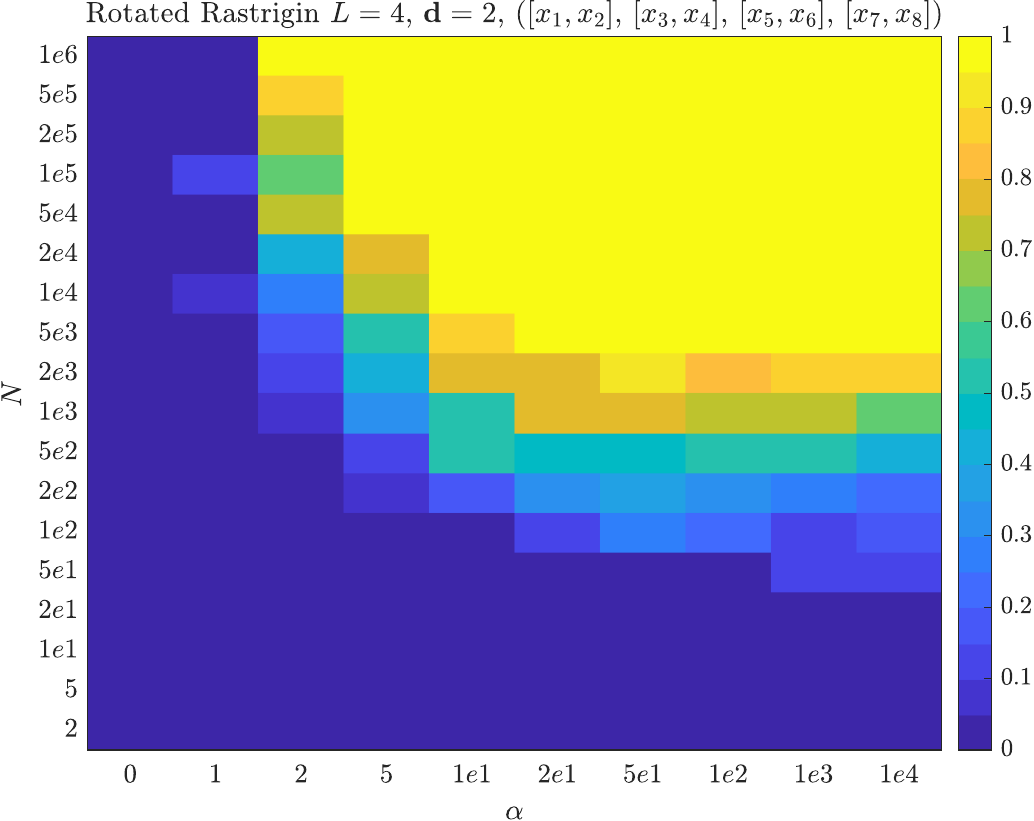}
            \caption{$\mathbf{d}=2$.}
            \label{fig:heatmap_rastr8D_anisod2_intro}
        \end{subfigure}
        \hfill
        % Subfigure 3 
        \begin{subfigure}[b]{0.3\textwidth}
            \centering
            \includegraphics[width=\textwidth]{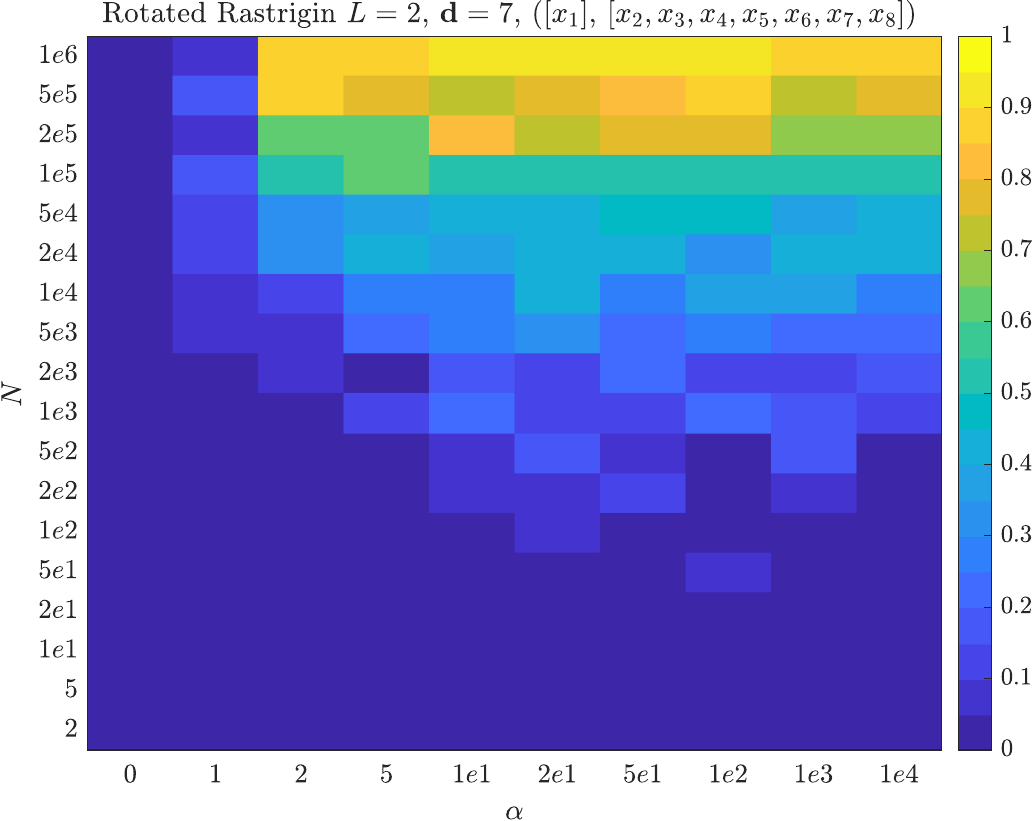}
            \caption{$\mathbf{d}=7$.}
            \label{fig:heatmap_rastr8D_anisod7_intro}
        \end{subfigure}
    
        \caption{The performance of anisotropic CBO progressively deteriorates as the complexity of the objective function (here, the intrinsic dimensionality $\intrd$) increases (from left to right), see Figure \ref{fig:heatmaps_cbo_ANISO8D} for more details.}
        \label{fig:heatmaps_cbo_ANISO8D_intro}
    \end{figure}
    
In Figure~\ref{fig:heatmaps_cbo_ANISO4D_cD_intro}, we observe that an increase in the complexity of the local geometry of the individual components~$\CE_\ell$ (increase of the constants $\{(c_D)_k\}_{k \in \{1,\ldots,D\}}$ that quantify the magnitude of the oscillatory component of the function) leads to a decrease in the success rates of the anisotropic CBO algorithm. 
    \begin{figure}[htbp]
        \centering
        % Subfigure 1
        \begin{subfigure}[b]{0.3\textwidth}
            \centering
            \includegraphics[width=\textwidth]{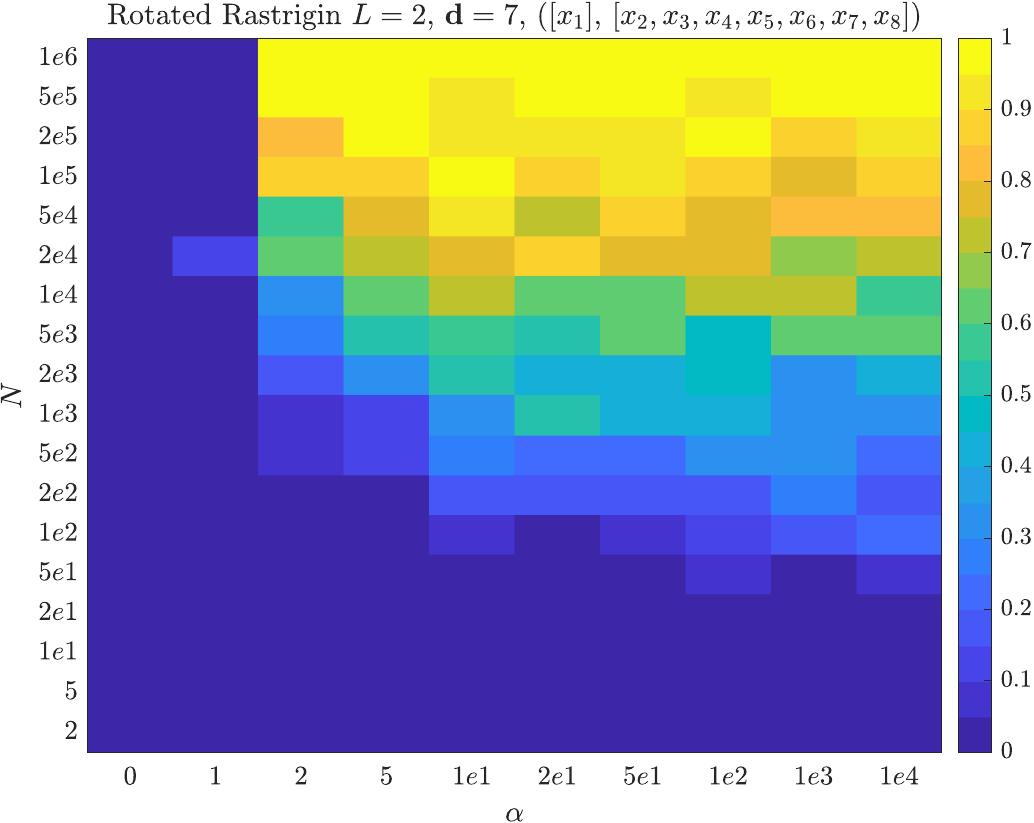}
            \caption{$\mathbf{d}=7$ with $(c_D)_1=2.5$ and $(c_D)_2=\cdots=(c_D)_8=1.5$.}
            \label{}
        \end{subfigure}
        \hfill
        % Subfigure 2 
        \begin{subfigure}[b]{0.3\textwidth}
            \centering
            \includegraphics[width=\textwidth]{Figures/rotatedD8d7.pdf}
            \caption{$\mathbf{d}=7$ with $(c_D)_1=2.5$ and $(c_D)_2=\cdots=(c_D)_8=2.5$.}
            \label{}
        \end{subfigure}
        \hfill
        % Subfigure 3 
        \begin{subfigure}[b]{0.3\textwidth}
            \centering
            \includegraphics[width=\textwidth]{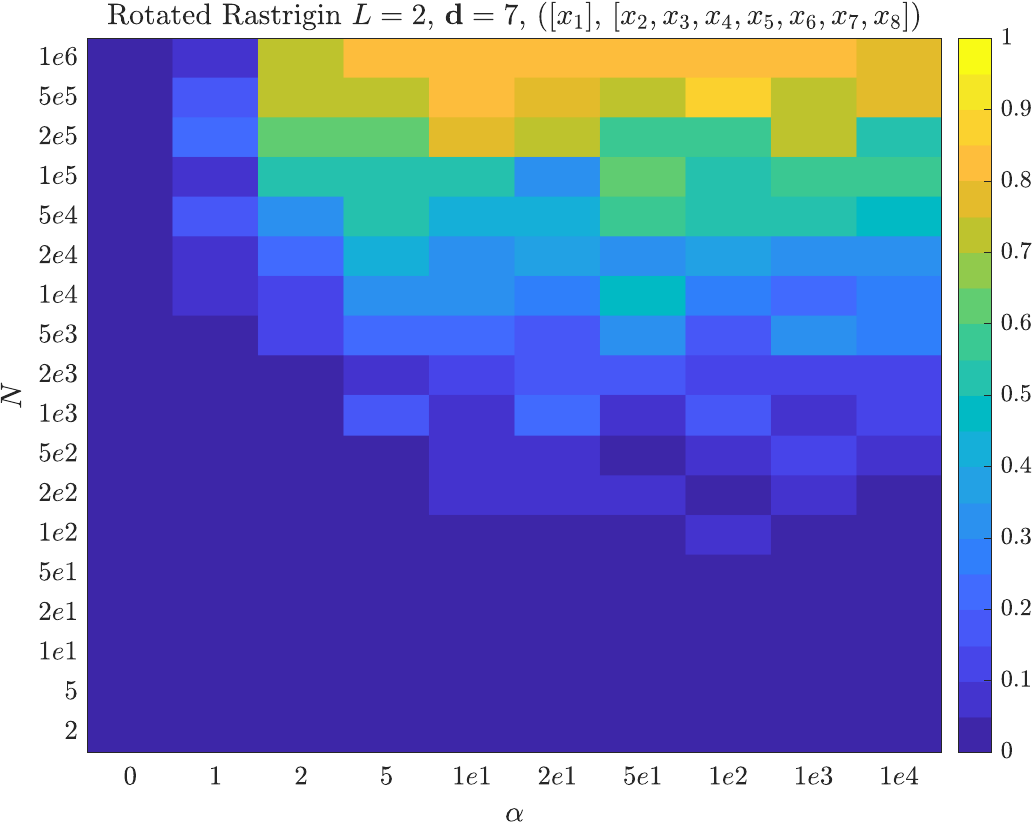}
            \caption{$\mathbf{d}=7$ with $(c_D)_1=10$ and $(c_D)_2=\cdots=(c_D)_8=2.5$.}
            \label{fig:heatmap_rastr8D_anisod7_intro_cD_caseC}
        \end{subfigure}
    
        \caption{The performance of anisotropic CBO progressively deteriorates as the complexity of the objective function (here, the non-convexity within the separable components) increases (from left to right), see Figure \ref{fig:heatmaps_cbo_ANISO8D_cD} for more details.}
\label{fig:heatmaps_cbo_ANISO4D_cD_intro}
    \end{figure}

%%%%%%%%%%%%%%%%%%%%%%%%%%%%%%%%%%%%%%%%%%%%%%%%%%%%%%%%%%
%%%%%%%%%%%%%%%%%%%%%%%%%%%%%%%%%%%%%%%%%%%%%%%%%%%%%%%%%%
% \newpage
\subsection{Organization}

The remainder of this paper is organized as follows.
After a literature review in Section~\ref{sec:intro_literatureCBO},
we discuss in depth Assumption~\ref{ass:add_sep} of additive separability in Section~\ref{sec:relEandEl}.
%introduce the hypotheses used for our theoretical analysis, and examine how they adapt to the additional structure imposed on the objective function.
Section~\ref{sec:mainresults} then presents a  rigorous formulation for our main result, so far presented only in an informal manner in Theorem~\ref{thm:conv_dlg1_Vk_micro_nonrigorous}.
This section concludes with a detailed discussion and a proof.
Several auxiliary results and their proofs are deferred to Section~\ref{sec:proofmainresults}. 
In Section~\ref{sec:numerics}, we provide compelling numerical experiments that verify and support our theoretical findings,
before Section~\ref{sec:conclusion} concludes the manuscript.

%%%%%%%%%%%%%%%%%%%%%%%%%%%%%%%%%%%%%%%%%%%%%%%%%%%%%%%%%%
%%%%%%%%%%%%%%%%%%%%%%%%%%%%%%%%%%%%%%%%%%%%%%%%%%%%%%%%%%
\subsection{Extensions and applications of consensus-based optimization}
\label{sec:intro_literatureCBO}

Before moving onto the technical details of our paper,
let us give an overview of relevant CBO variants and their applications,
illustrating the method's adaptability and breadth.
A comprehensive literature review, however, lies beyond the scope of this manuscript.

The conceptual design and fundamental dynamical principle of CBO are rooted in multi-agent metaheuristic derivative-free optimization algorithms,
in particular closely resembling the renowned particle‑swarm optimization framework \cite{grassi2020particle,huang2022global,herty2025micromacro} while being inspired by mechanisms of evolution strategies \cite{riedl2023all,fornasier2026consensus}.
Surprisingly, it can be observed that, through its consensus‑driven drift term, CBO exhibits a stochastic gradient descent-like behavior despite solely relying on zero-order evaluations of the objective function \cite{riedl2023all}.

Numerous extensions of this method have been developed to tackle a broad spectrum of optimization tasks, such as constrained optimization \cite{borghi2023constrained,herty2025micromacro,fornasier2020consensus_hypersurface_wellposedness,fornasier2020consensus_sphere_convergence,beddrich2024constrained}, multi-objective  \cite{borghi2022adaptive,klamroth2022consensus,borghi2022consensus} and
hierarchical or multi-level formulations \cite{herty2025multiscale,trillos2024cb,chao2026convergence},
sparse optimization and mirror descent \cite{bungert2025mirrorcbo}, 
min-max or saddle point problems \cite{borghi2024particle,huang2024consensus}, optimization problems involving stochastic or noisy objectives \cite{bonandin2025consensus,bonandin2025kinetic,bellavia2025discrete} as well as multiple global minima \cite{bungert2022polarized,fornasier2024polarized}.
Recent work has also focused on formulating the method so that it can be applied to infinite‑dimensional optimization tasks \cite{borghi2023model,borghi2025dynamics,khatab2026consensus,CHSV,huang2026derivative}.
Several adaptations of the original CBO algorithm have been introduced to accommodate more intricate dynamics. Among the enhancements, we mention formulations incorporating memory components \cite{huang2025self,riedl2022leveraging,totzeck2020personal}, momentum terms \cite{chen2020consensus}, second‑order dynamics \cite{byeon2025secondorder,cipriani2021zero,herty2025micromacro,grassi2023mean}, 
gradient information \cite{riedl2022leveraging,zhang2026proxicbo}, % (Riedl and ProxiCBO)
optimal control mechanisms \cite{huang2026fast}, truncated‑noise perturbations \cite{fornasier2023consensus}, jump processes \cite{kalise2022consensus,aceves2026consensus}, and superlinear drift \cite{franceschi2025superlinear}.

Ultimately, the method has been employed in a broad spectrum of domains, such as compressed sensing \cite{riedl2022leveraging} and phase retrieval \cite{fornasier2020consensus_sphere_convergence}, robust subspace detection \cite{fornasier2020consensus_sphere_convergence}, the training of neural networks \cite{carrillo2019consensus,fornasier2021convergence,riedl2022leveraging,borghi2023consensus,de2025mean}, the optimization of qubit configurations \cite{de2025consensus}, robotics \cite{sun2026consensus}, (clustered) federated learning problems \cite{carrillo2024fedcbo,trillos2024attack} and adversarial training \cite{trillos2024attack,roith2025consensus}, as well as multi‑agent game‑theoretic frameworks \cite{chenchene2025consensus}.

Beyond that, technical tools developed in the context of CBO analysis have been used recently to study concentration phenomena of mean-field transformers~\cite{alcalde2026quantifying}.

A code package for the CBO method and its variants is available at \cite{bailo2024cbx}.

%%%%%%%%%%%%%%%%%%%%%%%%%%%%%%%%%%%%%%%%%%%%%%%%%%%%%%%%%%
%%%%%%%%%%%%%%%%%%%%%%%%%%%%%%%%%%%%%%%%%%%%%%%%%%%%%%%%%%
\subsection{Notation}
\label{sec:notation}

We denote by $D$ the ambient dimension of the optimization problem,
whereas the $d_\ell$'s are the dimensions of the domains of the respective component functions~$\CE_\ell$, $\ell \in \{1,\dots,L\}$, as specified in Assumption \ref{ass:add_sep}.
The intrinsic dimension $\intrd$, defined as the largest of the $d_\ell$'s, is given in \eqref{def:intrinsic_dim}. 
When formulating a general statement, which will be applied later in the setting of some problem-specific dimension, we write $d$.

% We set $\setnumber{J}\coloneqq\{1,\ldots,N\}$, for any positive natural number $J >1$.
We let $\bbN^+ \coloneqq \{1,2,\ldots\}$. % and $\bbR^+$ be the set of positive real numbers.
The $k$-th vector of the standard basis in $\bbR^d$ is abbreviated by $e_k^d$. 
The Euclidean and infinity norms of a vector $u \in \Rd$ are denoted by $\N{u}_{2,d}$ and $\N{u}_{\infty,d}$ respectively, whereas the absolute value of a real number $w \in \R$ is denoted by $\SN{w}$.
Throughout the manuscript, only $\ell^{\infty}$-balls will be considered and denoted by $\B^{d}_{r}(u) \coloneqq \{y \in \bbR^d: \N{y-u}_{\infty,d} \leq r\}$.
For the space of continuous functions~$f:X\rightarrow Y$ we write $\CC(X,Y)$, with $X \subset \Rn$ and $Y$ a suitable topological space. 
In the case of real-valued functions we omit $Y$. $\nabla$ indicates the gradient operator of a function on $\Rd$.

Given a random variable $X$, $\text{Law}(X)$ and $\bbE(X)$  denote its law and expectation, respectively.
The set of probability measures over a metric space $E$ is denoted by $\CP(E)$, and the set of probability measures with finite moments up to order $1\leq p<\infty$ are collected in $\CP_p(E) \subset \CP(E)$.
When discussing a particular fixed distribution, we write~$\indivmeasure$.
The \mbox{Wasserstein-$p$} distance~$W_p$ between two probability measures~$\indivmeasure_1,\indivmeasure_2\in\CP_p(\bbR^d)$ is defined by
\begin{align*}
W_p(\indivmeasure_1,\indivmeasure_2) = \left(\inf_{\pi\in\Pi(\indivmeasure_1,\indivmeasure_2)}\int\N{x_1-x_2}_2^pd\pi(x_1,x_2)\right)^{1/p},
\end{align*}
where $\Pi(\indivmeasure_1,\indivmeasure_2)$ denotes the set of all couplings of $\indivmeasure_1$ and $\indivmeasure_2$, i.e., the collection of all probability measures over $\mathbb{R}^d\times\mathbb{R}^d$ with marginals $\indivmeasure_1$ and $\indivmeasure_2$ on the first and second component, respectively (see, e.g., \cite{savare2008gradientflows}).
This manuscript mainly focuses on laws of stochastic processes, $\rho=(\rho_t)_{t\in[0,T]}\in\CC([0,T],\CP(\bbR^d))$. The term $\rho_t\in\CP(\bbR^d)$ refers to a specific instance of this law at time~$t$.

Finally, $\CN_d(\mu,\Sigma)$ denotes the $d$-dimensional multivariate normal distribution with mean vector $\mu \in \Rd$ and covariance matrix $\Sigma \in \R^{d \times d}$.

%%%%%%%%%%%%%%%%%%%%%%%%%%%%%%%%%%%%%%%%%%%%%%%%%%
%%%%%%%%%% Section %%%%%%%%%%%%%%%%%%%%%%%%%%%%%%%
%%%%%%%%%%%%%%%%%%%%%%%%%%%%%%%%%%%%%%%%%%%%%%%%%%
\section{Assumptions on the objective \texorpdfstring{$\CE$}{} and its components \texorpdfstring{$\CE_\ell$}{}}
\label{sec:relEandEl}
In this section,
we discuss the assumptions on the additively separable objective function
\begin{equation*}
    \CE(x)
    = \sum_{{\ell}=1}^L \CE_{\ell}(\CB^\top_{\ell}x)
\end{equation*}
that will be used throughout this manuscript.
In Section~\ref{sec:add_sep_asm}, we elaborate in more depth on the assumption of additive separability, i.e., Assumption~\ref{ass:add_sep}.
Thereafter, in Sections~\ref{sec:relEandEl_relwp} and \ref{sec:relEandEl_relICP}, respectively, we introduce the regularity and tractability assumptions required for our theoretical analysis.
We in particular investigate the relation between such assumptions on the objective $\CE$ itself, and its components~$\CE_\ell$.

\subsection{Additive separability assumption}
\label{sec:add_sep_asm}

Assumption~\ref{ass:add_sep} is a structural assumption on the objective function~$\CE$. Under this assumption, the $D$-dimensional optimization problem~\eqref{eq:min_prob} decomposes into $L$ lower-dimensional $d_{\ell}$-dimensional optimization problems, as made rigorous in the following lemma.

\begin{lemma}
    \label{lem:rel_Eunder_Elunder}
    Let $ \CEunder_{\ell} \coloneqq \inf_{v \in \Rdl}\CE_{\ell}(v)$ for any $\ell \in \{1,\dots,L\}$. 
    If $\CE$ is additively separable as in Assumption~\ref{ass:add_sep},
    it holds 
    \begin{equation}
        \label{eq:rel_globmin_Elunder}
        \CB^\top_\ell \globmin = \argmin_{v\in \Rdl} \CE_\ell(v)
        \quad \text{ with } \quad
        \CEunder_\ell = \CE_\ell (\CB^\top_\ell \globmin),
    \end{equation}
    and, in particular,
    $\CEunder = \sum_{\ell=1}^L\CEunder_{\ell}$.
\end{lemma}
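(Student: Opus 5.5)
The plan is to exploit the fact that the coordinate selectors $\CB_\ell^\top$ act on disjoint blocks, so the vector $x\in\RD$ is in bijection with the tuple $(\CB_1^\top x,\dots,\CB_L^\top x)\in\R^{d_1}\times\cdots\times\R^{d_L}$. Concretely, since $\CI_1,\dots,\CI_L$ partition $\{1,\dots,D\}$, we have $\sum_{\ell} P_\ell = I_D$. Hence for any choice of $v_\ell\in\Rdl$, $\ell=1,\dots,L$, the vector $x=\sum_{\ell}\CB_\ell v_\ell$ satisfies $\CB_\ell^\top x = v_\ell$, because $\CB_\ell^\top\CB_{\ell'} = 0$ for $\ell\neq\ell'$ and $\CB_\ell^\top\CB_\ell = I_{d_\ell}$. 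I will record this surjectivity/independence property first, as everything rests on it.

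Next I prove that $\CB_\ell^\top\globmin$ minimizes $\CE_\ell$. Fix $\ell$ and an arbitrary $v\in\Rdl$. Define the competitor $y \coloneqq \globmin - P_\ell\globmin + \CB_\ell v$, which differs from $\globmin$ only in the block $\CI_\ell$. By the block property, $\CB_{\ell'}^\top y = \CB_{\ell'}^\top\globmin$ for $\ell'\neq\ell$ and $\CB_\ell^\top y = v$. Minimality of $\globmin$ in \eqref{eq:min_prob} gives $\CE(\globmin)\le\CE(y)$. Expanding both sides with \eqref{eq:add_sep}, all terms with $\ell'\neq\ell$ cancel since they are finite real numbers, leaving $\CE_\ell(\CB_\ell^\top\globmin)\le\CE_\ell(v)$. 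Thus $\CEunder_\ell = \CE_\ell(\CB_\ell^\top\globmin)$ and the infimum is attained.

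For uniqueness, which is needed to write $\argmin$ as an equality, suppose $v\neq\CB_\ell^\top\globmin$ also attains $\CEunder_\ell$. Then the same competitor $y$ satisfies $\CE(y)=\CE(\globmin)$ while $y\neq\globmin$. This contradicts the assumed uniqueness of the global minimizer. Finally, summing $\CEunder_\ell=\CE_\ell(\CB_\ell^\top\globmin)$ over $\ell$ and using \eqref{eq:add_sep} yields $\CEunder=\CE(\globmin)=\sum_\ell\CEunder_\ell$.

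There is no real obstacle here. The only point requiring care is the construction of the block-replacement competitor $y$ and verifying its projections, since that is where disjointness of the index sets is used; the cancellation step relies on the components $\CE_{\ell'}$ being real-valued, hence finite.
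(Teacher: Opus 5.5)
Your proof is correct and follows the same route as the paper: both replace the $\ell$-th block of $\globmin$ by an arbitrary $v$, using the disjointness of the index sets $\CI_\ell$, and compare with the global optimality of $\globmin$. The paper argues by contradiction where you argue directly. Your extra step, deriving uniqueness of the minimizer of $\CE_\ell$ from the assumed uniqueness of $\globmin$, is a welcome addition: the paper's proof only shows that $\CB_\ell^\top\globmin$ is a minimizer and leaves the uniqueness implicit in the $\argmin$ equality unaddressed.
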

\begin{proof}
    Since $\globmin$ is a global minimizer of $\CE$ according to \eqref{eq:min_prob}, we have $\CE(\globmin)=\inf_{x\in\RD}\CE(x)$.
    With Assumption~\ref{ass:add_sep}, it holds for all $x \in \RD$ that 
    $\CE(\globmin)=\sum_{\ell=1}^L \CE_\ell(\CB^\top_\ell \globmin) \le \CE(x)$.
    We first show that, for every $\ell\in\{1,\dots,L\}$, $\CB_\ell^\top \globmin$ minimizes $\CE_\ell$.
    Fix $\bar\ell\in\{1,\dots,L\}$.
    Suppose, by contradiction, that there exists $\bar v\in\R^{d_{\bar\ell}}$ such that $\CE_{\bar\ell}(\bar v)<\CE_{\bar\ell}(\CB_{\bar\ell}^\top \globmin)$.
    Because the coordinate selector maps $\CB_\ell^\top$ act on disjoint coordinate sets $\CI_{\ell}$,
    we can define a vector $\bar x\in\RD$ such that $\CB_{\bar\ell}^\top \bar x=\bar v$ but $\CB_\ell^\top \bar x=\CB_\ell^\top \globmin$ for all $\ell\neq \bar\ell$.
    Hence,
    \begin{equation}
        \CE(\bar x)
        = \sum_{\ell\neq \bar\ell}\CE_\ell(\CB_\ell^\top \bar x) + \CE_{\bar\ell}(\CB_{\bar\ell}^\top \bar x)
        = \sum_{\ell\neq \bar\ell}\CE_\ell(\CB_\ell^\top \globmin) + \CE_{\bar\ell}(\bar v)
        < \sum_{\ell\neq \bar\ell}\CE_\ell(\CB_\ell^\top \globmin) + \CE_{\bar\ell}(\CB_{\bar\ell}^\top \globmin)
        = \CE(\globmin),
    \end{equation}
    which contradicts the optimality of $\globmin$.
    Therefore, $\CB_\ell^\top \globmin$ minimizes $\CE_\ell$ for every $\ell\in\{1,\dots,L\}$, and thus $\CEunder_\ell=\inf_{v\in\R^{d_\ell}}\CE_\ell(v)=\CE_\ell(\CB_\ell^\top \globmin)$.
    Summing over $\ell\in\{1,\dots,L\}$ and using that $\CE$ is additively separable gives $\CEunder = \CE(\globmin) = \sum_{\ell=1}^L \CE_\ell(\CB_\ell^\top \globmin) = \sum_{\ell=1}^L \CEunder_\ell$,
    which concludes the proof.
\end{proof}

\begin{remark}
\label{rem:why_intrd}
To fully capture the structure of a function $\CE$ that is additively separable as in Assumption~\ref{ass:add_sep},
we require information about the dimensionalities of all components, i.e., $\{d_1,\dots,d_L\}$.
The tuple $(D,L)$ is not able to provide this information to sufficient extent.
Just imagine an example, where $D$ is large and $L=2$. Both, $\{d_1=1,d_2=D-1\}$ and $\{d_1=D/2,d_2=D/2\}$ would be characterized by the same tuple, while their functions having substantially different complexities.
The intrinsic dimension $\intrd$ introduced in \eqref{def:intrinsic_dim}, on the other hand,
naturally captures the complexity of such functions.
\end{remark}

\begin{remark}
    \label{rem:notablechoicesL}
    There are two trivial cases in which the additive separability property in \eqref{eq:add_sep} simplifies.
    They correspond to two distinct choices of $L$ and $\intrd$.
    (i) If $L=1$, and hence $\intrd = D$, then there is only one component, i.e., $d_{1} = \intrd = D$ with $\CB_{1}^\top:\RD \to \RD$ being the identity map.
    This setting corresponds to the treatment of general objective functions $\CE$ without any additive separability assumption.
    (ii) On the other side, if $d_\ell=1$ for all $\ell \in \{1,\dots,L\}$, hence $\intrd = 1$ and $L=D$, then each coordinate direction is its own component, i.e., $\CB_{\ell}^\top:\RD \to \R$ coincides with $(e^D_{\ell})^\top$ where $e^D_{\ell} \in \RD$ denotes the $\ell$-th vector from the standard basis of $\RD$. In this case, $\CE(x) = \sum_{\ell=1}^D \CE_\ell(x_\ell)$ is the sum of $L=D$ one-dimensional functions $\CE_\ell:\R\rightarrow\R$.
\end{remark}

\begin{example}
\label{ex:examples_separability}
    Let us provide a few examples to make the additive separability property in Assumption~\ref{ass:add_sep} tangible.
    (i) The objective $\CE(x)=\big(\!\sum_{k=1}^D x_k\big)^2$ is not additively separable due to the presence of cross-terms. In general, functions of the form $\CE(x)=\N{Rx}_{2,D}^2$ are not additively separable. In fact, if $R\in\R^{D\times D}$ is a random Gaussian matrix, $\CE$ is not additively separable with probability $1$.
    However, for certain block-structures of $R$, certain levels of additive separability are achieved.
    (ii) In particular, the $D$-dimensional parabola $\CE(x)=\N{x}_{2,D}^2$ is additively separable with $L=D$ and hence $d_\ell=1$ for all $\ell \in \{1,\dots,L\}$. The same holds for the classical $D$-dimensional Rastrigin function~$\CE(x) = \sum_{k=1}^{D} \left( x_k^2 - c_D \cos(2\pi x_k) + c_D \right)$, where the constant $c_D$ quantifies the amplitude of the oscillatory component of the function.
\end{example}

%%%%%%%%%%%%%%%%%%%%%%%%%%%%%%%%%%%%%%%%%%%%%%
\subsection{Regularity assumptions}
\label{sec:relEandEl_relwp}

Let us now state regularity conditions on the objective function $\CE$ which are standard requirements for proving the well‑posedness of the interacting particle system, the mean‑field CBO system~\cite{carrillo2018analytical,fornasier2024consensus} as well as quantitative mean‑field estimates~\cite{carrillo2018analytical,carrillo2021consensus,fornasier2024consensus,gerber2025mean}.
They involve boundedness, Lipschitz continuity, and growth conditions on $\CE$.

\begin{assumption}
    \label{ass:well-posedness_E}
    The objective function 
    $\CE \in \CC(\RD)$ 
    \begin{enumerate}[label=W\arabic*,labelsep=10pt,leftmargin=35pt]
        \item\label{ass:well-posedness_E_lowerbound} 
        is lower bounded, i.e., 
        $\CEunder \coloneqq \inf_{x \in \RD}\CE(x) >- \infty$,
    \item\label{ass:well-posedness_E_lipschitz} 
    satisfies for some constants $C_u, c_u > 0$ the conditions
	\begin{align*}
		\abs{{\CE(x)-\CE(x')}} 
        &\leq C_u \left(1+\N{x}_{2,D} + \N{x'}_{2,D}\right) \N{x-x'}_{2,D}
        \quad \text{for all } x,x' \in \RD,\\
		\CE(x) - \CEunder
        &\leq    c_{u} \left(1+\N{x}_{2,D}^2\right)
        \quad \text{for all } x \in \RD,
	\end{align*}
    \item\label{ass:well-posedness_E_growth} is either upper bounded, i.e., $\CEupper \coloneqq \sup_{x \in \RD}\CE(x) < \infty,$ or satisfies for some constants $  c_{l},   C_{l} > 0$ the assumption
	\begin{align*} \label{eq:quadratic_growth_condition_car}
		\CE(x) - \CEunder \geq   c_{l} \N{x}_{2,D}^2 \quad \text{for all } \N{x}_{2,D} \geq C_{l}.
	\end{align*}
    \end{enumerate}
\end{assumption}

Analogously, we can state regularity conditions on the objective functions $\CE_\ell$ that involve boundedness, Lipschitz continuity, and growth conditions on $\CE_\ell$.

\begin{assumption}
    \label{ass:well-posedness_El}
    The objective function 
    $\CE_{\ell} \in \CC(\Rdl)$ 
    \begin{enumerate}[label=W\arabic*$_{\ell}$,labelsep=10pt,leftmargin=35pt]
        \item\label{ass:well-posedness_El_lowerbound} 
        is lower bounded, i.e., 
            $\CEunder_{\ell} \coloneqq \inf_{v \in \Rdl}\CE_{\ell}(v) >- \infty,$
    \item\label{ass:well-posedness_El_lipschitz} 
    satisfies for some constants $C_{u,\ell}, c_{u,\ell}  > 0$ the conditions
	\begin{align*}
		\abs{\CE_{\ell}(v)-\CE_{\ell}(v')}
        &\leq C_{u,\ell} \left(1+\N{v}_{2,d_\ell} + \N{v'}_{2,d_\ell}\right)\N{v-v'}_{2,d_\ell}
        \quad \text{for all } v,v' \in \Rdl,\\
		\CE_{\ell}(v) - \CEunder_{\ell}
        &\leq c_{u,\ell} \left(1+\N{v}_{2,d_\ell}^2\right) 
        \quad \text{for all } v \in \Rdl,
	\end{align*}
    \item\label{ass:well-posedness_El_growth} is either upper bounded, i.e., $\CEupper_{\ell} \coloneqq \sup_{v \in \Rdl}\CE_{\ell}(v) < \infty,$ or satisfies for some $c_{l,\ell}, C_{l,\ell} > 0$ the assumption
	\begin{align*} %\label{eq:quadratic_growth_condition_car}
		\CE_\ell(v) - \CEunder_\ell \geq c_{l,\ell} \N{v}_{2,d_\ell}^2 \quad \text{for all } \N{v}_{2,d_\ell} \geq C_{l,\ell}.
	\end{align*}
    \end{enumerate}
\end{assumption}

Assumption~\ref{ass:well-posedness_El} characterizes the regularity of the objective function~$\CE$ at the level of its components~$\CE_\ell$,
whereas Assumption~\ref{ass:well-posedness_E} imposes a global \emph{coarser} condition directly on $\CE$.
In Lemma~\ref{lem:assE_onEl_wellp} below, we show that, for an additively separable objective as in Assumption~\ref{ass:add_sep}, the global assumption implies the component‑wise one with the same constants.
Lemma~\ref{lem:assEl_onE_wellp} provides the converse statement, indicating that the global assumption holds with the worst-case constants appearing in the component‑wise assumptions.
Consequently, describing regularity more \emph{finely} (through the constants~$C_{u,\ell},c_{u,\ell},c_{l,\ell}, C_{l,\ell}$) for each component offers greater flexibility.
This lemma furthermore ensures, that for an additively separable objective function it is sufficient to directly impose the regularity conditions on the lower‑dimensional objectives~$\CE_\ell$.
This observation will be exploited later in Remark \ref{rem:implications_wp}, and Sections \ref{sec:mainresults_sep} and \ref{sec:mainresults_convmicro} (more precisely, in the proofs of Theorems \ref{thm:separation_anisotropicCBO_dynamics} and \ref{thm:conv_dlg1_Vk_micro} respectively).

\begin{lemma}
    \label{lem:assE_onEl_wellp}
    Let $\CE:\RD\rightarrow\R$ be additively separable as in Assumption~\ref{ass:add_sep}.
    If $\CE$ satisfies Assumption \ref{ass:well-posedness_E}, then $\CE_\ell$ fulfills Assumption~\ref{ass:well-posedness_El} for all $\ell \in \{1,\dots,L\}$ with constants
    \begin{equation}
        C_{u,\ell} \coloneqq C_u,
        \qquad
        c_{u,\ell}\coloneqq c_u,
        \qquad
        c_{l,\ell}\coloneqq c_l,
        \qquad
        C_{l,\ell}\coloneqq C_l.
    \end{equation}
    Moreover, $\CEunder = \sum_{\ell=1}^L\CEunder_{\ell}$. If $\CE$ is upper bounded, then $\CE_\ell$ is upper bounded for all $\ell \in \{1,\dots,L\}$ and $\CEupper = \sum_{\ell=1}^L\CEupper_{\ell}$.
\end{lemma}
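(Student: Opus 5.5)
The plan is to restrict $\CE$ to ``slices'' through the global minimizer. Fix $\ell\in\{1,\dots,L\}$ and, for $v\in\Rdl$, define the embedding $\iota_\ell(v)\coloneqq \CB_\ell v + (I-P_\ell)\globmin\in\RD$. This vector agrees with $v$ on the coordinates $\CI_\ell$ and with $\globmin$ elsewhere. Since the index sets $\CI_1,\dots,\CI_L$ partition $\{1,\dots,D\}$, Assumption~\ref{ass:add_sep} gives
\begin{equation*}
\CE(\iota_\ell(v))=\CE_\ell(v)+\sum_{k\neq\ell}\CEunder_k = \CE_\ell(v)-\CEunder_\ell+\CEunder,
\end{equation*}
where we used Lemma~\ref{lem:rel_Eunder_Elunder}, namely $\CE_k(\CB_k^\top\globmin)=\CEunder_k$ and $\CEunder=\sum_k\CEunder_k$. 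Lower boundedness \ref{ass:well-posedness_El_lowerbound} and the relation $\CEunder=\sum_\ell\CEunder_\ell$ therefore follow immediately. Moreover, $\CE_\ell(v)-\CEunder_\ell=\CE(\iota_\ell(v))-\CEunder$ for every $v$.

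For the local Lipschitz bound in \ref{ass:well-posedness_El_lipschitz}, I would apply \ref{ass:well-posedness_E_lipschitz} to the points $\iota_\ell(v)$ and $\iota_\ell(v')$. Their difference has norm $\N{\iota_\ell(v)-\iota_\ell(v')}_{2,D}=\N{v-v'}_{2,d_\ell}$, so the difference term transfers exactly. The obstacle is the prefactor: $\N{\iota_\ell(v)}_{2,D}^2=\N{v}^2_{2,d_\ell}+\N{(I-P_\ell)\globmin}^2_{2,D}$, which in general exceeds $\N{v}_{2,d_\ell}$. Identical constants therefore come out cleanly only if $(I-P_\ell)\globmin=0$.

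The way around this is to slice through the origin instead, using $\tilde\iota_\ell(v)=\CB_\ell v$. Then $\CE(\CB_\ell v)=\CE_\ell(v)+\sum_{k\ne\ell}\CE_k(0)$. The constant $\sum_{k\neq\ell}\CE_k(0)$ cancels in differences, so the Lipschitz bound transfers with $C_{u,\ell}=C_u$, because $\N{\CB_\ell v}_{2,D}=\N{v}_{2,d_\ell}$. For the growth bounds, note that $\sum_{k\ne\ell}(\CE_k(0)-\CEunder_k)\ge0$, which gives
\begin{equation*}
\CE_\ell(v)-\CEunder_\ell\le \CE(\CB_\ell v)-\CEunder\le c_u(1+\N{v}_{2,d_\ell}^2).
\end{equation*}
The lower quadratic growth in \ref{ass:well-posedness_El_growth} is the hard direction. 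Here $\CE_\ell(v)-\CEunder_\ell=\CE(\CB_\ell v)-\CEunder-\sum_{k\ne\ell}(\CE_k(0)-\CEunder_k)$, and the extra nonnegative constant has the wrong sign. This leaves two options.
\begin{itemize}
\item[(a)] Use the $\globmin$-slice, where the identity $\CE_\ell(v)-\CEunder_\ell=\CE(\iota_\ell(v))-\CEunder$ is exact. Then $\N{\iota_\ell(v)}_{2,D}\ge\N{v}_{2,d_\ell}\ge C_l$ yields $\CE_\ell(v)-\CEunder_\ell\ge c_l\N{\iota_\ell(v)}_{2,D}^2\ge c_l\N{v}_{2,d_\ell}^2$ with identical constants.
\item[(b)] Combine both slices if needed.
\end{itemize}
I expect option (a) to settle the growth part cleanly. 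The Lipschitz constant transfers via the origin slice, and the upper quadratic bound via the origin slice as shown. So all four constants carry over unchanged, and the only genuine care is choosing the right slice for each inequality.

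Finally, for upper boundedness, the $\globmin$-slice gives $\sup_v\CE_\ell(v)=\sup_v \CE(\iota_\ell(v))-\CEunder+\CEunder_\ell\le\CEupper-\CEunder+\CEunder_\ell<\infty$. For the identity $\CEupper=\sum_\ell\CEupper_\ell$, additive separability over disjoint coordinates lets us maximize each summand independently: $\sup_x\sum_\ell\CE_\ell(\CB_\ell^\top x)=\sum_\ell\sup_{v}\CE_\ell(v)$. The inequality ``$\le$'' is trivial. For ``$\ge$'', choose near-maximizers $v_\ell$ of each $\CE_\ell$ and assemble $x=\sum_\ell\CB_\ell v_\ell$, exactly as in the construction used in the proof of Lemma~\ref{lem:rel_Eunder_Elunder}.
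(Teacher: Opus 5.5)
Your proposal is correct and follows the paper's proof essentially step by step. The paper also embeds $v$ as $\CB_\ell v$ (the slice through the origin) for both parts of \ref{ass:well-posedness_El_lipschitz}, dropping the nonnegative constant $\sum_{k\neq\ell}(\CE_k(0)-\CEunder_k)$ to get the quadratic upper bound. For the growth part of \ref{ass:well-posedness_El_growth} it embeds $v$ as $\CB_\ell v+(I-P_\ell)\globmin$ (the slice through $\globmin$), where $\CE_\ell(v)-\CEunder_\ell=\CE(\CB_\ell v+(I-P_\ell)\globmin)-\CEunder$ holds exactly. Your option (a) is therefore all that is needed, and you can drop the hedge in option (b).
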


\begin{proof}
    Fix $\bar\ell\in\{1,\dots,L\}$.
    Since the coordinate selector maps $\CB_\ell^\top$ act on disjoint coordinate sets $\CI_{\ell}$,
    we can define for any vector $\bar v\in\R^{d_{\bar\ell}}$ a vector $\bar x\in\RD$ such that $\CB_{\bar\ell}^\top \bar x= \bar v$ but $\CB_\ell^\top \bar x=\CB_\ell^\top \widetilde{x}$ for some $\widetilde{x}\in\RD$ for all $\ell\neq \bar\ell$.

    \textit{Regarding \ref{ass:well-posedness_E_lowerbound} $\implies$ \ref{ass:well-posedness_El_lowerbound}.}
    This follows directly from Lemma~\ref{lem:rel_Eunder_Elunder}.
    
    \textit{Regarding \ref{ass:well-posedness_E_lipschitz} $\implies$ \ref{ass:well-posedness_El_lipschitz}.}
    Let $\bar v, \bar v'\in\R^{d_{\bar\ell}}$ and consider $\bar x, \bar x'\in\RD$ as defined above with $\widetilde x=0$, i.e., $\CB_{\bar\ell}^\top \bar x= \bar v$ and $\CB_{\bar\ell}^\top \bar x'= \bar v'$ but $\CB_\ell^\top \bar x = \CB_\ell^\top \bar x'=\CB_\ell^\top 0$ for all $\ell\neq \bar\ell$.
    For the first part of \ref{ass:well-posedness_El_lipschitz}, using the first part of \ref{ass:well-posedness_E_lipschitz} in the second step, we have
    \begin{equation}
    \begin{split}
        \abs{\CE_{\bar\ell}(\bar v)-\CE_{\bar\ell}(\bar v')}
        &= \abs{\CE(\bar x)-\CE(\bar x')} \\
        &\leq C_{u} \left(1+\N{\bar x}_{2,D} + \Nnormal{\bar x'}_{2,D}\right)\Nnormal{\bar x-\bar x'}_{2,D}
        = C_{u} \left(1+\N{\bar v}_{2,d_{\bar\ell}} + \Nnormal{\bar v'}_{2,d_{\bar\ell}}\right)\Nnormal{\bar v-\bar v'}_{2,d_{\bar\ell}},
    \end{split}
    \end{equation}
    where the last equality holds since $\N{\bar x}_{2,D} = \N{\bar v}_{2,d_{\bar\ell}}$, $\N{\bar x'}_{2,D} = \N{\bar v'}_{2,d_{\bar\ell}}$, and $\N{\bar x - \bar x'}_{2,D} = \N{\bar v - \bar v'}_{2,d_{\bar\ell}}$ by choice of $\bar x$ and $\bar x'$.
    Analogously, for the second part of \ref{ass:well-posedness_El_lipschitz}, by using the second part of \ref{ass:well-posedness_E_lipschitz}, we have 
    \begin{equation}
    \begin{split}
        \CE_{\bar\ell}(\bar v)-\CEunder_{\bar\ell}
        \leq \CE(\bar x)-\CEunder
        \leq c_{u} \left(1+\N{\bar x}_{2,D}^2\right)
        = c_{u} \left(1+\N{\bar v}_{2,d_{\bar\ell}}^2\right).
    \end{split}
    \end{equation}

    \textit{Regarding \ref{ass:well-posedness_E_growth} $\implies$ \ref{ass:well-posedness_El_growth}.}
    Clearly, if $\CE$ is upper bounded, $\CE_\ell$ must be upper bounded for any $\ell \in \{1,\dots,L\}$.
    Denoting by $\CEupper_{\ell}$ these upper bounds, it holds $\sum_{\ell=1}^L\CEupper_{\ell} = \CEupper$ with an argument analogous to the one of Lemma~\ref{lem:rel_Eunder_Elunder}.
    For the second part of \ref{ass:well-posedness_El_growth}, 
    let $\bar v\in\R^{d_{\bar\ell}}$ and consider $\bar x\in\RD$ as defined above with $\widetilde x=\globmin$, i.e., $\CB_{\bar\ell}^\top \bar x= \bar v$ but $\CB_\ell^\top \bar x = \CB_\ell^\top \globmin$ for all $\ell\neq \bar\ell$.
    If $\N{\bar v}_{2,d_{\bar\ell}} \ge C_l$, then
    $\N{\bar x}_{2,D}^2 = \sum_\ell \N{\CB_\ell^\top \bar x}^2_{2,d_\ell} = \N{\bar v}_{2,d_{\bar\ell}}^2 + \sum_{\ell\neq \bar\ell}\N{\CB_\ell^\top \globmin}_{2,d_\ell}^2 \ge \N{\bar v}_{2,d_{\bar\ell}}^2 \ge C_l^2$,
    and hence $\N{\bar x}_{2,D}\ge C_l$.
    Thus, with Assumption~\ref{ass:add_sep} and Lemma~\ref{lem:rel_Eunder_Elunder},
    using the second part of \ref{ass:well-posedness_E_growth} in the second step implies
    \begin{equation}
    \begin{split}
        \CE_{\bar\ell}(\bar v) - \CEunder_{\bar\ell}
        = \CE(\bar x) - \CEunder
        \geq c_{l} \N{\bar x}_{2,D}^2
        = c_{l} \N{\bar v}_{2,d_{\bar\ell}}^2 + c_{l}  \sum_{\ell\not=\bar\ell}\N{\CB_\ell^\top \globmin}_{2,d_{\ell}}^2 \geq c_{l} \N{\bar v}_{2,d_{\bar\ell}}^2.
    \end{split}
    \end{equation}
    showing that $\CE_\ell$ fulfills the second part of \ref{ass:well-posedness_El_growth} with $c_{l,\ell}=c_l$ and $C_{l,\ell}=C_l$.
\end{proof}

\begin{lemma}
    \label{lem:assEl_onE_wellp}
    Let $\CE:\RD\rightarrow\R$ be additively separable as in Assumption~\ref{ass:add_sep}.
    If $\CE_{\ell}$ satisfies Assumption~\ref{ass:well-posedness_El} for all $\ell \in \{1,\dots,L\}$, then $\CE$ fulfills Assumption~\ref{ass:well-posedness_E} with constants
    \begin{equation}
        C_u \coloneqq \sqrt{L}\max_{\ell} C_{u,\ell}, \qquad
        c_u \coloneqq L \max_{\ell} c_{u,\ell}, \qquad
        c_l \coloneqq \tfrac12 \min_{\ell} c_{l,\ell}, \qquad
        C_l \coloneqq \sqrt{2\sum_{\ell=1}^L C_{l,\ell}^2},
    \end{equation}
    provided that all $\CE_\ell$ satisfy the same alternative in \ref{ass:well-posedness_El_growth}. Moreover, $\CEunder = \sum_{\ell=1}^L\CEunder_{\ell}$ and, if all $\CE_\ell$ are upper bounded, $\CEupper = \sum_{\ell=1}^L\CEupper_{\ell}$.
\end{lemma}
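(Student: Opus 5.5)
We verify each item of Assumption~\ref{ass:well-posedness_E} directly from the decomposition $\CE(x)=\sum_{\ell}\CE_\ell(\CB_\ell^\top x)$. Throughout we write $v_\ell\coloneqq\CB_\ell^\top x$ and $v'_\ell\coloneqq\CB_\ell^\top x'$, and we use the Pythagorean identity $\N{x}_{2,D}^2=\sum_\ell\N{v_\ell}_{2,d_\ell}^2$. In particular $\N{v_\ell}_{2,d_\ell}\le\N{x}_{2,D}$ for every $\ell$. Continuity of $\CE$ is immediate, since $\CE$ is a finite sum of continuous functions composed with linear maps.

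\textit{W1.} Each $\CE_\ell$ is bounded below by $\CEunder_\ell$, so $\CE\ge\sum_\ell\CEunder_\ell>-\infty$. Lemma~\ref{lem:rel_Eunder_Elunder} then gives $\CEunder=\sum_\ell\CEunder_\ell$. Note that only the infimum part of that lemma is needed, and it also holds by choosing near-minimizers independently in each block. The statement $\CEupper=\sum_\ell\CEupper_\ell$ follows by the analogous independent-block argument applied to suprema.

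\textit{W2.} For the Lipschitz-type bound we use the triangle inequality and the component estimates:
\[
\abs{\CE(x)-\CE(x')}\le\max_\ell C_{u,\ell}\sum_\ell\bigl(1+\N{v_\ell}+\N{v'_\ell}\bigr)\N{v_\ell-v'_\ell}\le\max_\ell C_{u,\ell}\bigl(1+\N{x}+\N{x'}\bigr)\sum_\ell\N{v_\ell-v'_\ell}.
\]
Cauchy--Schwarz gives $\sum_\ell\N{v_\ell-v'_\ell}\le\sqrt L\,\N{x-x'}_{2,D}$, which yields $C_u=\sqrt L\max_\ell C_{u,\ell}$. For the upper growth bound we write
\[
\CE(x)-\CEunder=\sum_\ell\bigl(\CE_\ell(v_\ell)-\CEunder_\ell\bigr)\le\max_\ell c_{u,\ell}\Bigl(L+\N{x}^2\Bigr)\le L\max_\ell c_{u,\ell}\bigl(1+\N{x}^2\bigr).
\]

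\textit{W3.} If all $\CE_\ell$ are upper bounded, then $\CE\le\sum_\ell\CEupper_\ell<\infty$ and we are done.

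In the quadratic-growth alternative, take $\N{x}_{2,D}\ge C_l=\sqrt{2\sum_\ell C_{l,\ell}^2}$. Split the indices into the \emph{large} blocks $G=\{\ell:\N{v_\ell}\ge C_{l,\ell}\}$ and the small blocks $G^c$. Every term $\CE_\ell(v_\ell)-\CEunder_\ell$ is nonnegative, so dropping $G^c$ and applying the component growth bound on $G$ gives
\[
\CE(x)-\CEunder\ge\min_\ell c_{l,\ell}\sum_{\ell\in G}\N{v_\ell}^2.
\]
The small blocks satisfy
\[
\sum_{\ell\in G^c}\N{v_\ell}^2<\sum_\ell C_{l,\ell}^2=\tfrac12 C_l^2\le\tfrac12\N{x}^2.
\]
Hence $\sum_{\ell\in G}\N{v_\ell}^2\ge\tfrac12\N{x}^2$, which gives $c_l=\tfrac12\min_\ell c_{l,\ell}$.

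This splitting is the only non-routine step, and it is exactly where the factor $2$ in $C_l$ is used. It is also the reason the hypothesis requires all $\CE_\ell$ to satisfy the same alternative: a bounded component could never supply quadratic growth on its own block.
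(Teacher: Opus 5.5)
Your proof is correct and takes essentially the same route as the paper: the triangle inequality plus Cauchy--Schwarz over the blocks gives \ref{ass:well-posedness_E_lipschitz}, and for the quadratic-growth alternative in \ref{ass:well-posedness_E_growth} you split into blocks with $\N{v_\ell}_{2,d_\ell}\ge C_{l,\ell}$ versus the rest and bound the small blocks by $\sum_\ell C_{l,\ell}^2=\tfrac12 C_l^2$. Your side remark is also valid: $\CEunder=\sum_\ell\CEunder_\ell$ already follows from choosing near-minimizers blockwise, so it does not need the existence of $\globmin$ that Lemma~\ref{lem:rel_Eunder_Elunder} relies on.
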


\begin{proof}
    Since the coordinate selector maps $\CB_\ell^\top$ act on disjoint coordinate sets $\CI_{\ell}$,
    we have for any vector $x\in\RD$ a collection of vectors $x_\ell\in\R^{d_{\ell}}$, $\ell \in \{1,\dots,L\}$, such that $\CB_{\ell}^\top x= x_\ell$ for all $\ell \in \{1,\dots,L\}$.
    
    \textit{Regarding \ref{ass:well-posedness_E_lowerbound} $\impliedby$ \ref{ass:well-posedness_El_lowerbound}.}
    Since each $\CE_\ell$ is lower bounded, $\CE(x)=\sum_{\ell=1}^L \CE_\ell(x_\ell)\ge \sum_{\ell=1}^L \CEunder_\ell$ for all $x\in\RD$, hence $\CE$ is lower bounded. The identity $\CEunder = \sum_{\ell=1}^L\CEunder_\ell$ follows as in Lemma~\ref{lem:rel_Eunder_Elunder}.

    \textit{Regarding \ref{ass:well-posedness_E_lipschitz} $\impliedby$ \ref{ass:well-posedness_El_lipschitz}.}
    Let $x,x'\in\RD$ and denote $x_\ell \coloneqq \CB_\ell^\top x$ and $x'_\ell \coloneqq \CB_\ell^\top x'$ for $\ell \in \{1,\dots,L\}$.
    With triangle inequality in the first step and using \ref{ass:well-posedness_El_lipschitz}, Cauchy-Schwarz inequality together with the fact that $\sum_{\ell=1}^L \N{x_\ell}_{2,d_\ell}^2 = \N{x}_{2,D}^2$ in the second, we have
    \begin{equation}
    \begin{split}
        \abs{\CE(x)-\CE(x')}
        \le \sum_{\ell=1}^L \abs{\CE_\ell(x_\ell)-\CE_\ell(x'_\ell)}
        &\le \sum_{\ell=1}^L C_{u,\ell} \left(1+\N{x_\ell}_{2,d_\ell} + \N{x'_\ell}_{2,d_\ell}\right)\N{x_\ell-x'_\ell}_{2,d_\ell}  \\
        &\le \sqrt{L}\max_\ell C_{u,\ell}\left(1+\N{x}_{2,D}+\N{x'}_{2,D}\right)\N{x-x'}_{2,D}.
    \end{split}
    \end{equation}
    Analogously, for the second part of \ref{ass:well-posedness_E_lipschitz}, by using the second part of \ref{ass:well-posedness_El_lipschitz}, we have 
    \begin{equation}
    \begin{split}
        \CE(x)-\CEunder
        = \sum_{\ell=1}^L \bigl(\CE_\ell(x_\ell)-\CEunder_\ell\bigr)
        &\le \sum_{\ell=1}^L c_{u,\ell} \left(1+\N{x_\ell}_{2,d_\ell}^2\right) \\
        &\le \max_\ell c_{u,\ell}\sum_{\ell=1}^L \left(1+\N{x_\ell}_{2,d_\ell}^2\right)
        \le L\max_\ell c_{u,\ell}\left(1+\N{x}_{2,D}^2\right).
    \end{split}
    \end{equation}

    \textit{Regarding \ref{ass:well-posedness_E_growth} $\impliedby$ \ref{ass:well-posedness_El_growth}.}
    If all $\CE_\ell$ are upper bounded, then $\CE(x)\le \sum_{\ell=1}^L \CEupper_\ell$ for all $x\in\RD$, hence $\CE$ is upper bounded and $\CEupper = \sum_{\ell=1}^L\CEupper_\ell$.
    For the second part of \ref{ass:well-posedness_E_growth},
    if all $\CE_\ell$ satisfy \ref{ass:well-posedness_El_growth},
    then for any $x\in\RD$,
    \begin{equation}
    \begin{split}
        \CE(x)-\CEunder
        = \sum_{\ell=1}^L \bigl(\CE_\ell(x_\ell)-\CEunder_\ell\bigr)
        &\ge \sum_{\ell:\,\N{x_\ell}_{2,d_\ell}\ge C_{l,\ell}} c_{l,\ell}\N{x_\ell}_{2,d_\ell}^2
        \ge \min_{\ell}c_{l,\ell}\sum_{\ell:\,\N{x_\ell}_{2,d_\ell}\ge C_{l,\ell}} \N{x_\ell}_{2,d_\ell}^2 \\
        &= \min_{\ell}c_{l,\ell}\left(\N{x}_{2,D}^2-\sum_{\ell:\,\N{x_\ell}_{2,d_\ell}<C_{l,\ell}}\N{x_\ell}_{2,d_\ell}^2\right).
    \end{split}
    \end{equation}
    With $c_l=\tfrac12\min_\ell c_{l,\ell}$ and using that
    $\sum_{\ell:\,\N{x_\ell}_{2,d_\ell}<C_{l,\ell}}\N{x_\ell}_{2,d_\ell}^2
    \le \sum_{\ell=1}^L C_{l,\ell}^2$ in the first step,
    it follows that
    \begin{equation}
        \CE(x)-\CEunder
        \ge 2c_l\left(\N{x}_{2,D}^2-\sum_{\ell=1}^L C_{l,\ell}^2\right)
        \ge c_l\N{x}_{2,D}^2
    \end{equation}
    for all $x\in\RD$ with $\N{x}_{2,D}\ge C_l=\sqrt{2\sum_{\ell=1}^L C_{l,\ell}^2}$ showing that $\CE$ fulfills the second part of \ref{ass:well-posedness_E_growth}.
\end{proof}

%%%%%%%%%%%%%%%%%%%%%%%%%%%%%%%%%%%%%%%%%%%%%%%
\subsection{Tractability assumptions}
\label{sec:relEandEl_relICP}

In the same spirit as in the previous section,
we examine in what follows tractability conditions of the landscape of the objective function $\CE$, which are required to prove the convergence of CBO to a global minimizer \cite{fornasier2024consensus,fornasier2021convergence,riedl2024perspective,bonandin2025strong}.

\begin{assumption}
    \label{ass:invcont_QLP_glob}
    The objective function 
    $\CE \in \CC(\RD)$ satisfies, for some constants $\eta, R_0, \CE_{\infty}>0$, and $\nu \in (0,\infty)$, the conditions
        \begin{subequations}
        \begin{align}
            \N{x-\globmin}_{\infty,D} &\le \frac{1}{\eta} \left( \CE(x)-\CEunder\right)^{\nu} \quad \text{for all }x \in \B^{D}_{R_0}(\globmin), \label{ass:invcont_QLP_glob_1}\\
            \CE_{\infty} &< \CE(x)-\CEunder \quad \text{for all }x \in \left( \B^{D}_{R_0}(\globmin) \right)^c. \label{ass:invcont_QLP_glob_2}
        \end{align}
        \end{subequations}
\end{assumption}

\begin{assumption}%[Inverse continuity assumption for $\CE_\ell$]
    \label{ass:invcont_QLPl2}
    The objective function 
    $\CE_{\ell} \in \CC(\Rdl)$ satisfies, for some constants $\eta_\ell, R_{0,\ell}, \CE_{\infty,\ell} >0$, and $\nu_\ell \in (0,\infty)$, the conditions
        \begin{subequations}
        \begin{align}
            \N{v-\CB_\ell^\top \globmin}_{\infty,d_\ell} &\le \frac{1}{\eta_\ell} \left( \CE_\ell(v)-\CEunder_\ell \right)^{\nu_\ell} \quad \text{for all $v \in \B^{d_\ell}_{R_0,\ell}(\CB_\ell^\top\globmin)$,} \label{ass:invcont_QLPl2_1}\\
            \CE_{\infty,\ell} &< \CE_\ell(v)-\CEunder_\ell \quad \text{for all $v \in \left( \B^{d_\ell}_{R_0,\ell}(\CB_\ell^\top\globmin) \right)^c$}. \label{ass:invcont_QLPl2_2}
        \end{align}
        \end{subequations}
\end{assumption}

Both Assumptions~\ref{ass:invcont_QLP_glob} and \ref{ass:invcont_QLPl2} should be interpreted as tractability conditions of the landscape of the objective function $\CE$ around the global minimizer $\globmin$ (see \eqref{ass:invcont_QLP_glob_1} and \eqref{ass:invcont_QLPl2_1}) and in the farfield (see \eqref{ass:invcont_QLP_glob_2} and \eqref{ass:invcont_QLPl2_2}).
The first parts (\eqref{ass:invcont_QLP_glob_1} and \eqref{ass:invcont_QLPl2_1}) describe  the local coercivity of $\CE$ and $\CE_\ell$, respectively, which implies that there is a unique minimizer $\globmin$ on $\B_{R_0}^D(\globmin)$ and that $\CE$ grows like  $x\mapsto \N{x-\globmin}_{\infty,D}^{1/\nu}$, and analogously for $\CE_\ell$.
This condition is known as the inverse continuity condition \cite{fornasier2020consensus_sphere_convergence,fornasier2021convergence,fornasier2024consensus,riedl2024perspective}, as a quadratic growth condition in the case $\nu= 1/2$ from \cite{anitescu2000degenerate, necoara2019linear}, or as the H\"olderian error bound condition in the case $\nu \in (0,1]$ \cite{bolte2017error}. 
\cite[Theorem~4]{necoara2019linear} and \cite[Theorem~2]{karimi2016linear} provide related assumptions that imply these conditions on $\RD$ and $\Rdl$ respectively.
The second parts (\eqref{ass:invcont_QLP_glob_2} and \eqref{ass:invcont_QLPl2_2}) describe the behavior of $\CE$ and $\CE_\ell$, respectively, in the farfield and prevent $\CE(x) \approx \CEunder$ for some $x \in \RD$ far away from $\globmin$, and analogously for $\CE_\ell$.
It is introduced to cover objectives that tend to a constant just above $\CE_{\infty}$ as $\N{x}_{\infty,D}\rightarrow\infty$, since such functions do not satisfy the first condition globally.
Further note that \eqref{ass:invcont_QLP_glob_1} and \eqref{ass:invcont_QLP_glob_2} as well as \eqref{ass:invcont_QLPl2_1} and \eqref{ass:invcont_QLPl2_2} imply the uniqueness of the respective global minimizer on the full space.

Assumption~\ref{ass:invcont_QLPl2} provides a \emph{more refined description} of the objective function, as it captures the tractability condition of the objective function landscape of $\CE$ at the level of individual components~$\CE_\ell$.
In contrast, Assumption~\ref{ass:invcont_QLP_glob} imposes global \emph{coarser} conditions on $\CE$.
This is captured by Lemma~\ref{lem:assE_onEl_ICP} below, which shows that Assumption~\ref{ass:invcont_QLP_glob} implies Assumption~\ref{ass:invcont_QLPl2} for all components $\ell \in \{1,\dots,L\}$ with the same constants in the case that $\CE$ is additively separable as in Assumption~\ref{ass:add_sep}.
Hence, characterizing the tractability (via the constants \(\eta_{\ell}, R_{0,\ell}, \CE_{\infty,\ell}, \nu_\ell\)) at the level of individual components allows for more flexibility, and a \emph{finer} description and characterization of the objective function~$\CE$.
On the other hand, it is sufficient to impose Assumption~\ref{ass:invcont_QLPl2} on the individual components in order to obtain the corresponding condition for $\CE$, see Lemma~\ref{lem:assEl_onE_ICP_general}.
In other words, for an objective function with this additively separable structure, we can assume tractability conditions directly on the lower‑dimensional components,
see also Remark~\ref{rem:crucial_remark}.
The relevance of the aforementioned lemmas is 
further recalled in Remark~\ref{rem:implications_ICP} of Section~\ref{sec:mainresults_convmf}.

The following implication shows that the tractability conditions required on the level of the component functions $\CE_\ell$ are weaker than those imposed on the full objective $\CE$.

\begin{lemma}
    \label{lem:assE_onEl_ICP}
    Let $\CE:\RD\rightarrow\R$ be additively separable as in Assumption~\ref{ass:add_sep}.
    If $\CE$ satisfies Assumption \ref{ass:invcont_QLP_glob}, then $\CE_\ell$ fulfills Assumption~\ref{ass:invcont_QLPl2} for all $\ell \in \{1,\dots,L\}$ with constants
    \begin{equation}
        \eta_{\ell} \coloneqq \eta,
        \qquad
        R_{0,\ell}\coloneqq R_{0},
        \qquad
        \CE_{\infty,\ell}\coloneqq \CE_{\infty},
        \qquad
        \nu_{\ell}\coloneqq \nu.
    \end{equation}
\end{lemma}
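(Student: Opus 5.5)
The plan is the same embedding trick used in the proofs of Lemmas~\ref{lem:rel_Eunder_Elunder} and~\ref{lem:assE_onEl_wellp}. Fix $\bar\ell\in\{1,\dots,L\}$ and, for any $\bar v\in\R^{d_{\bar\ell}}$, define $\bar x\in\RD$ by $\CB_{\bar\ell}^\top\bar x=\bar v$ and $\CB_\ell^\top\bar x=\CB_\ell^\top\globmin$ for all $\ell\neq\bar\ell$. This is possible because the index sets $\CI_\ell$ form a partition.

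Two identities for $\bar x$ drive the argument. First, by Assumption~\ref{ass:add_sep} and Lemma~\ref{lem:rel_Eunder_Elunder},
\begin{equation*}
\CE(\bar x)-\CEunder=\CE_{\bar\ell}(\bar v)+\sum_{\ell\neq\bar\ell}\CEunder_\ell-\sum_{\ell=1}^L\CEunder_\ell=\CE_{\bar\ell}(\bar v)-\CEunder_{\bar\ell}.
\end{equation*}
Second, since $\bar x-\globmin$ vanishes outside the coordinates in $\CI_{\bar\ell}$,
\begin{equation*}
\N{\bar x-\globmin}_{\infty,D}=\N{\bar v-\CB_{\bar\ell}^\top\globmin}_{\infty,d_{\bar\ell}}.
\end{equation*}
This is the key point where the $\ell^\infty$-norm is convenient, although the same identity would also hold for the $\ell^2$-norm. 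As a consequence, $\bar v\in\B^{d_{\bar\ell}}_{R_0}(\CB_{\bar\ell}^\top\globmin)$ if and only if $\bar x\in\B^{D}_{R_0}(\globmin)$, and the same equivalence holds for the complements.

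For \eqref{ass:invcont_QLPl2_1}, take $\bar v$ in the $d_{\bar\ell}$-dimensional ball of radius $R_{0,\bar\ell}=R_0$. Then $\bar x$ lies in $\B^D_{R_0}(\globmin)$, and \eqref{ass:invcont_QLP_glob_1} together with the two identities gives
\begin{equation*}
\N{\bar v-\CB_{\bar\ell}^\top\globmin}_{\infty,d_{\bar\ell}}=\N{\bar x-\globmin}_{\infty,D}\le\tfrac1\eta(\CE(\bar x)-\CEunder)^\nu=\tfrac1\eta(\CE_{\bar\ell}(\bar v)-\CEunder_{\bar\ell})^\nu.
\end{equation*}
For \eqref{ass:invcont_QLPl2_2}, take $\bar v$ outside that ball. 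Then $\bar x\notin\B^D_{R_0}(\globmin)$, so \eqref{ass:invcont_QLP_glob_2} yields $\CE_\infty<\CE(\bar x)-\CEunder=\CE_{\bar\ell}(\bar v)-\CEunder_{\bar\ell}$. These two steps give the claimed constants $\eta_\ell=\eta$, $R_{0,\ell}=R_0$, $\CE_{\infty,\ell}=\CE_\infty$ and $\nu_\ell=\nu$.

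There is no real obstacle. The only care needed is to choose the filler coordinates equal to $\CB_\ell^\top\globmin$ rather than $0$. With this choice the other components contribute zero both to the energy gap and to the distance to $\globmin$, and this is exactly what makes the constants transfer without loss.
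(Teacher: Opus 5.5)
Your proposal is correct and follows essentially the same route as the paper. Both proofs use the embedding $\bar x$ with filler coordinates $\CB_\ell^\top\globmin$, the identities $\N{\bar x-\globmin}_{\infty,D}=\N{\bar v-\CB_{\bar\ell}^\top\globmin}_{\infty,d_{\bar\ell}}$ and $\CE(\bar x)-\CEunder=\CE_{\bar\ell}(\bar v)-\CEunder_{\bar\ell}$ (via Lemma~\ref{lem:rel_Eunder_Elunder}), and then transfer the near-field and far-field conditions separately.
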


\begin{proof}
    Fix $\bar\ell\in\{1,\dots,L\}$.
    Since the coordinate selector maps $\CB_\ell^\top$ act on disjoint coordinate sets $\CI_{\ell}$,
    we define for any vector $\bar v\in\R^{d_{\bar\ell}}$ a vector $\bar x\in\RD$ such that $\CB_{\bar\ell}^\top \bar x= \bar v$ but $\CB_\ell^\top \bar x=\CB_\ell^\top \globmin$ for all $\ell\neq \bar\ell$.
    By construction,
    \begin{equation}
        \label{eq:lem:assE_onEl_ICP:aux:1}
        \N{\bar{x}-\globmin}_{\infty,D} = \N{\bar{v}-\CB_{\bar{\ell}}^\top \globmin}_{\infty,d_{\bar{\ell}}}.
    \end{equation}
    
    \textit{Regarding \eqref{ass:invcont_QLP_glob_1} $\implies$ \eqref{ass:invcont_QLPl2_1}.}
    For the first part \eqref{ass:invcont_QLPl2_1} of Assumption~\ref{ass:invcont_QLPl2}, 
    let $\bar{v} \in \B^{d_{\bar{\ell}}}_{R_0,{\bar{\ell}}}(\CB_{\bar{\ell}}^\top\globmin)$.
    Due to \eqref{eq:lem:assE_onEl_ICP:aux:1}, it then holds
    $\bar{x} \in \B^{D}_{R_0}(\globmin)$.
    Hence, using \eqref{ass:invcont_QLP_glob_1} of Assumption~\ref{ass:invcont_QLP_glob} in the first inequality, we have
    \begin{equation}
    \begin{split}
        \N{\bar{v}-\CB_{\bar{\ell}}^\top \globmin}_{\infty,d_{\bar\ell}}
        = \N{\bar{x}-\globmin}_{\infty,D}
        &\le \frac{1}{\eta} \left( \CE(\bar{x})-\CEunder \right)^{\nu}
        = \frac{1}{\eta} \left( \sum_{\ell=1}^L \left( \CE_{\ell}(\CB_{\ell}^\top \bar{x}) - \CEunder_\ell \right) \right)^{\nu} \\
        &= \frac{1}{\eta} \left( \left(\CE_{\bar{\ell}}(\CB_{\bar{\ell}}^\top \bar{x}) - \CEunder_{\bar{\ell}}\right) + \sum_{\ell \neq \bar\ell} \left(\CE_\ell(\CB_\ell^\top \bar{x}) - \CEunder_\ell \right)\right)^{\nu} \\
        &= \frac{1}{\eta} \left( \left(\CE_{\bar{\ell}}(\bar{v}) - \CEunder_{\bar{\ell}}\right) + \sum_{\ell \neq \bar\ell} \left(\CE_\ell(\CB_\ell^\top \globmin) - \CEunder_\ell \right)\right)^{\nu}
        = \frac{1}{\eta} \left( \CE_{\bar\ell}(\bar v) - \CEunder_{\bar\ell} \right)^{\nu},
    \end{split}
    \end{equation}
    where the last equality in the first line is due to $\CE$ being additively separable,
    while the remaining equalities use the definition of $\bar x$ together with the fact that $\CEunder_\ell = \CE_\ell (\CB^\top_\ell \globmin)$ as of Lemma~\ref{lem:rel_Eunder_Elunder}.

    \textit{Regarding \eqref{ass:invcont_QLP_glob_2} $\implies$ \eqref{ass:invcont_QLPl2_2}.}
    For the second part \eqref{ass:invcont_QLPl2_2} of Assumption~\ref{ass:invcont_QLPl2}, 
    let $\bar{v} \in \big(\B^{d_{\bar{\ell}}}_{R_0,{\bar{\ell}}}(\CB_{\bar{\ell}}^\top\globmin)\big)^c$.
    Due to \eqref{eq:lem:assE_onEl_ICP:aux:1}, it then holds
    $\bar{x} \in \big(\B^{D}_{R_0}(\globmin)\big)^c$.
    Hence, using \eqref{ass:invcont_QLP_glob_2} of Assumption~\ref{ass:invcont_QLP_glob} in the first inequality and exploiting that $\CE$ is additively separable in the second step, we conclude that
    \begin{equation}
    \begin{split}
        \CE_{\infty}
        &< \CE(\bar{x})-\CEunder
        = \sum_{\ell=1}^L \left( \CE_\ell(\CB_\ell^\top \bar{x}) - \CEunder_\ell \right)
        = \left(\CE_{\bar\ell}(\CB_{\bar\ell}^\top \bar{x}) - \CEunder_{\bar\ell}\right) + \sum_{\ell\not=\bar\ell} \left( \CE_\ell(\CB_\ell^\top \bar{x}) - \CEunder_\ell \right) \\
        &= \left(\CE_{\bar\ell}(\bar v) - \CEunder_{\bar\ell}\right) + \sum_{\ell\not=\bar\ell} \left( \CE_\ell(\CB_\ell^\top \globmin) - \CEunder_\ell \right)
        = \CE_{\bar\ell}(\bar v) - \CEunder_{\bar\ell},
    \end{split}
    \end{equation}
    where we used in the last equality the fact that $\CEunder_\ell = \CE_\ell (\CB^\top_\ell \globmin)$ as of Lemma~\ref{lem:rel_Eunder_Elunder}. This shows  \eqref{ass:invcont_QLPl2_2}.
\end{proof}

The next statement demonstrates that the converse of Lemma~\ref{lem:assE_onEl_ICP} holds for worst‑case constants, i.e., the worst values obtained from the original conditions.

To appreciate the relevance of this observation, consider the following scenario (see Example~\ref{ex:ICP_worstcase} for a concrete illustration).
Suppose that the objective function $\CE$ is additively separable as in Assumption~\ref{ass:add_sep} and that the constants appearing in Assumption \ref{ass:invcont_QLPl2} are well‑behaved for all indices $\ell$ except for one. In this  case, the global tractability condition on $\CE$, Assumption \ref{ass:invcont_QLP_glob}, does not reflect this situation, since they are determined by the worst‑case constants (as shown in the following lemma).
In contrast, examining the tractability conditions on the level of the components $\CE_\ell$ does capture the favorable behavior of the majority of the indices. Consequently, whenever an additively separable structure of $\CE$ is known a priori, the tractability conditions on the individual components $\CE_\ell$ provide a more informative description than the corresponding global tractability condition on $\CE$.

\begin{lemma}
    \label{lem:assEl_onE_ICP_general}
    Let $\CE:\RD\rightarrow\R$ be additively separable as in Assumption~\ref{ass:add_sep}.
    If $\CE_{\ell}$ satisfies Assumption~\ref{ass:invcont_QLPl2} for all $\ell \in \{1,\dots,L\}$, then $\CE$ fulfills Assumption~\ref{ass:invcont_QLP_glob} with constants
    \begin{equation}
        \nu \coloneqq \min_{\ell}\nu_\ell, 
        \qquad
        R_0 \coloneqq \min_{\ell} R_{0,\ell},
        \qquad
        \CE_{\infty} \coloneqq \min_{\ell} \widetilde \CE_{\infty,\ell},
        \qquad
        \eta \coloneqq \left(\min_{\ell} \eta_{\ell} \right)\frac{(\overbar{\CE}_{R_0}-\CEunder)^{\nu}}{\max_\ell (\overbar{\CE}_{R_0}-\CEunder)^{\nu_\ell}},
    \end{equation}
    where $\overbar{\CE}_{R_0} \coloneqq \max_{x \in \B_{R_0}^D\left(\globmin\right)} \CE(x)$ and $\widetilde{\CE}_{\infty,\ell} \coloneqq \min\bigl\{\CE_{\infty,\ell}, (\eta_\ell R_0)^{1/\nu_\ell}\bigr\}$.
\end{lemma}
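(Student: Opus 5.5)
The plan is to work block by block. For $x\in\RD$ write $v_\ell\coloneqq\CB_\ell^\top x$. By Assumption~\ref{ass:add_sep} and Lemma~\ref{lem:rel_Eunder_Elunder},
\begin{equation*}
\CE(x)-\CEunder=\sum_{\ell=1}^L\bigl(\CE_\ell(v_\ell)-\CEunder_\ell\bigr).
\end{equation*}
Every summand is nonnegative, so each one is bounded by the total gap:
\begin{equation*}
0\le \CE_\ell(v_\ell)-\CEunder_\ell\le \CE(x)-\CEunder \quad\text{for all } \ell.
\end{equation*}
Since the $\CI_\ell$ partition the coordinates, the $\ell^\infty$-norm splits as
\begin{equation*}
\N{x-\globmin}_{\infty,D}=\max_\ell \N{v_\ell-\CB_\ell^\top\globmin}_{\infty,d_\ell}.
\end{equation*}
These two facts are the only structural ingredients. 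The proof then has two parts, matching \eqref{ass:invcont_QLP_glob_1} and \eqref{ass:invcont_QLP_glob_2}.

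\emph{Near field.} Let $x\in\B^D_{R_0}(\globmin)$ with $R_0=\min_\ell R_{0,\ell}$. Then every block satisfies $v_\ell\in\B^{d_\ell}_{R_0}(\CB_\ell^\top\globmin)\subset\B^{d_\ell}_{R_{0,\ell}}(\CB_\ell^\top\globmin)$, so \eqref{ass:invcont_QLPl2_1} applies in every block. Set $t\coloneqq\CE(x)-\CEunder$. Combining the norm splitting, \eqref{ass:invcont_QLPl2_1}, and the monotonicity of $s\mapsto s^{\nu_\ell}$, I get
\begin{equation*}
\N{x-\globmin}_{\infty,D}\le \frac{1}{\min_\ell\eta_\ell}\max_\ell t^{\nu_\ell}.
\end{equation*}

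The remaining task, which I expect to be the only delicate point, is to replace the block-dependent exponents by the common exponent $\nu=\min_\ell\nu_\ell$. Write $M\coloneqq\overbar{\CE}_{R_0}-\CEunder$. This is finite because $\CE$ is continuous on the compact ball. It is positive because the minimizer is unique and $R_0>0$; in particular $t\le M$ on the ball. Since $\nu_\ell-\nu\ge0$, I factor
\begin{equation*}
t^{\nu_\ell}=t^{\nu}\,t^{\nu_\ell-\nu}\le t^{\nu}M^{\nu_\ell-\nu}=t^\nu\,\frac{M^{\nu_\ell}}{M^{\nu}}.
\end{equation*}
Taking the maximum over $\ell$ gives
\begin{equation*}
\max_\ell t^{\nu_\ell}\le t^\nu\,\frac{\max_\ell M^{\nu_\ell}}{M^\nu}.
\end{equation*}
Inserted into the previous bound, this yields \eqref{ass:invcont_QLP_glob_1} with exactly the stated $\eta$.

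\emph{Far field.} Let $x\notin\B^D_{R_0}(\globmin)$. Then some block $\bar\ell$ satisfies $\N{v_{\bar\ell}-\CB_{\bar\ell}^\top\globmin}_{\infty,d_{\bar\ell}}>R_0$. I distinguish two cases.
\begin{itemize}
\item If $v_{\bar\ell}\notin\B^{d_{\bar\ell}}_{R_{0,\bar\ell}}(\CB_{\bar\ell}^\top\globmin)$, then \eqref{ass:invcont_QLPl2_2} gives $\CE_{\bar\ell}(v_{\bar\ell})-\CEunder_{\bar\ell}>\CE_{\infty,\bar\ell}$.
\item Otherwise $v_{\bar\ell}$ lies in the ball of radius $R_{0,\bar\ell}$ but outside radius $R_0$. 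Then \eqref{ass:invcont_QLPl2_1} gives $R_0<\eta_{\bar\ell}^{-1}\bigl(\CE_{\bar\ell}(v_{\bar\ell})-\CEunder_{\bar\ell}\bigr)^{\nu_{\bar\ell}}$, that is, $\CE_{\bar\ell}(v_{\bar\ell})-\CEunder_{\bar\ell}>(\eta_{\bar\ell}R_0)^{1/\nu_{\bar\ell}}$.
\end{itemize}
In both cases the $\bar\ell$-th gap strictly exceeds $\widetilde\CE_{\infty,\bar\ell}\ge\min_\ell\widetilde\CE_{\infty,\ell}=\CE_\infty$. Since the total gap $\CE(x)-\CEunder$ dominates each block gap, this proves \eqref{ass:invcont_QLP_glob_2} and completes the argument.
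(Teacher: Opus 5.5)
Your proof is correct and follows essentially the same route as the paper. In the near field you apply \eqref{ass:invcont_QLPl2_1} block by block and normalize by $\overbar{\CE}_{R_0}-\CEunder$ to pass to the common exponent $\nu=\min_\ell\nu_\ell$; in the far field you split the complement of the $R_0$-ball into the region outside radius $R_{0,\bar\ell}$ and the annulus where \eqref{ass:invcont_QLPl2_1} yields $(\eta_{\bar\ell}R_0)^{1/\nu_{\bar\ell}}$. Your pointwise case split, using $\N{v_{\bar\ell}-\CB_{\bar\ell}^\top\globmin}_{\infty,d_{\bar\ell}}>R_0$ strictly, gives the strict inequality in \eqref{ass:invcont_QLP_glob_2} more cleanly than the paper's closed-annulus minimum argument.
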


\begin{proof}
    Since the coordinate selector maps $\CB_\ell^\top$ act on disjoint coordinate sets $\CI_{\ell}$,
    we have for any vector $x\in\RD$ a collection of vectors $x_\ell\in\R^{d_{\ell}}$, $\ell \in \{1,\dots,L\}$, such that $\CB_{\ell}^\top x= x_\ell$ for all $\ell \in \{1,\dots,L\}$.
    Moreover, by Lemma~\ref{lem:rel_Eunder_Elunder}, we have $\CEunder = \sum_{\ell=1}^L \CEunder_\ell$.

    \textit{Regarding \eqref{ass:invcont_QLP_glob_1} $\impliedby$ \eqref{ass:invcont_QLPl2_1}.}
    Let $x \in \B^D_{R_0}(\globmin)$. Then, for every $\ell \in \{1,\dots,L\}$,
    \begin{equation}
        \N{x_\ell-\CB_\ell^\top\globmin}_{\infty,d_\ell}
        =\N{\CB_\ell^\top x-\CB_\ell^\top\globmin}_{\infty,d_\ell}
        \le \N{x-\globmin}_{\infty,D}
        \le R_0 \le R_{0,\ell},
    \end{equation}
    hence $x_\ell \in \B^{d_\ell}_{R_{0,\ell}}(\CB_\ell^\top\globmin)$.
    Using now in the first line that \eqref{ass:invcont_QLPl2_1} holds for all $\ell \in \{1,\dots,L\}$ and in the second inequality the definition $\widetilde\eta \coloneqq \min_\ell\eta_\ell$,
    we obtain
    \begin{equation}
        \label{eq:proof:lem:assEl_onE_ICP_general:10}
    \begin{split}
        \N{x-\globmin}_{\infty,D}
        = \max_{\ell} \N{\CB_\ell^\top x-\CB_\ell^\top\globmin}_{\infty,d_\ell}
        &\le \max_{\ell} \frac{1}{\eta_\ell}
        \left(\CE_\ell(\CB_\ell^\top x-\CEunder_\ell\right)^{\nu_\ell}
        \le \frac{1}{\widetilde\eta} \max_{\ell} 
        \left(\CE_\ell(\CB_\ell^\top x)-\CEunder_\ell\right)^{\nu_\ell} \\
        &\le \frac{1}{\widetilde\eta} \max_{\ell} 
        \left(\sum_{\ell=1}^L\CE_\ell(\CB_\ell^\top x)-\CEunder_\ell\right)^{\nu_\ell}
        = \frac{1}{\widetilde\eta} \max_{\ell} 
        \left(\CE(x)-\CEunder\right)^{\nu_\ell}
    \end{split}
    \end{equation}
    after having used the additive separability as of Assumption~\ref{ass:add_sep} together with Lemma~\ref{lem:rel_Eunder_Elunder} in the last step.
    Since $\CE$ is continuous on ${\B^{D}_{R_0}}(\globmin)$, we denote $\overbar{\CE}_{R_0} \coloneqq \max_{x \in \B_{R_0}^D\left(\globmin\right)} \CE(x)<\infty$.
    
    \eqref{eq:proof:lem:assEl_onE_ICP_general:10} to obtain 
    \begin{equation}
    \begin{split}
        \N{x- \globmin}_{\infty,D}
        =
        \frac{1}{\widetilde\eta} \max_\ell\left( \CE(x)-\CEunder \right)^{\nu_\ell}
        &=
        \frac{1}{\widetilde\eta} \max_\ell\left(\big(\overbar{\CE}_{R_0}-\CEunder\big) \frac{\CE(x)-\CEunder}{\overbar{\CE}_{R_0}-\CEunder} \right)^{\nu_\ell} \\
        &\leq 
        \frac{1}{\widetilde\eta} \max_\ell \big(\overbar{\CE}_{R_0}-\CEunder\big)^{\nu_\ell} \max_\ell\left(\frac{\CE(x)-\CEunder}{\overbar{\CE}_{R_0}-\CEunder} \right)^{\nu_\ell} \\
        &\leq \frac{1}{\widetilde\eta / \big(\!\max_\ell (\overbar{\CE}_{R_0}-\CEunder)^{\nu_\ell}\big)} \left(\frac{\CE(x)-\CEunder}{\overbar{\CE}_{R_0}-\CEunder} \right)^{\nu}
        = \frac{1}{\eta} \left(\CE(x)-\CEunder\right)^\nu,
    \end{split}
    \end{equation}
    where we used in the next-to-last step that $(\CE(x)-\CEunder)/(\overbar{\CE}_{R_0}-\CEunder)<1$.

    \textit{Regarding \eqref{ass:invcont_QLP_glob_2} $\impliedby$ \eqref{ass:invcont_QLPl2_2}.}
    We first observe that for all $\ell \in \{1,\dots,L\}$, thanks to the continuity of $\CE_\ell$ on $\B^{d_\ell}_{R_0,\ell}(\CB_\ell^\top\globmin)$, we can obtain conditions analogous to \eqref{ass:invcont_QLPl2_2} but on the larger (since $R_0\leq R_{0,\ell}$) sets $\big( \B^{d_\ell}_{R_0}(\CB_\ell^\top\globmin) \big)^c$ for some new, potentially smaller constants $\widetilde{\CE}_{\infty,\ell} >0$.
    That is, it holds
    \begin{equation}
        \label{ass:invcont_QLPl2_new}
        \widetilde{\CE}_{\infty,\ell} < \CE_\ell(v)-\CEunder_\ell \quad \text{for all $v \in \left( \B^{d_\ell}_{R_0}(\CB_\ell^\top\globmin) \right)^c$}.
    \end{equation}
    To be more precise, decompose
    $\big(\B^{d_\ell}_{R_0}(\CB_\ell^\top\globmin)\big)^c = \big(\B^{d_\ell}_{R_{0,\ell}}(\CB_\ell^\top\globmin)\big)^c \cup \big(\overline{\B^{d_\ell}_{R_{0,\ell}}(\CB_\ell^\top\globmin)} \setminus \B^{d_\ell}_{R_0}(\CB_\ell^\top\globmin) \big)$.
    On the first set, \eqref{ass:invcont_QLPl2_2} already yields $\CE_{\infty,\ell} < \CE_\ell(v)-\CEunder_\ell$.
    On the second set, which is compact, continuity of $\CE_\ell$ implies that the minimum $m_\ell \coloneqq \min_{v \in \R^{d_\ell} : R_0 \le \|v-\CB_\ell^\top\globmin\|_{\infty,d_\ell} \le R_{0,\ell}} \bigl(\CE_\ell(v)-\CEunder_\ell\bigr)$ is attained and strictly positive.
    Since $R_0 \le R_{0,\ell}$, we can use \eqref{ass:invcont_QLPl2_1} on the annulus $\left\{v \in \R^{d_\ell} : R_0 \le \|v-\CB_\ell^\top\globmin\|_{\infty,d_\ell} \le R_{0,\ell}\right\}$.
    Indeed, for any such $v$, \eqref{ass:invcont_QLPl2_1} implies
    \begin{equation}
        \N{v-\CB_\ell^\top\globmin}_{\infty,d_\ell} \le \frac{1}{\eta_\ell}\bigl(\CE_\ell(v)-\CEunder_\ell\bigr)^{\nu_\ell},
    \end{equation}
    hence $\CE_\ell(v)-\CEunder_\ell \ge \bigl(\eta_\ell \N{v-\CB_\ell^\top\globmin}_{\infty,d_\ell}\bigr)^{1/\nu_\ell} \ge (\eta_\ell R_0)^{1/\nu_\ell}$.
    Therefore, $m_\ell \ge (\eta_\ell R_0)^{1/\nu_\ell}$.
    Hence, defining $\widetilde{\CE}_{\infty,\ell} \coloneqq \min\bigl\{\CE_{\infty,\ell}, (\eta_\ell R_0)^{1/\nu_\ell}\bigr\}$,
    we obtain \eqref{ass:invcont_QLPl2_new}.

    Now let $x \in \bigl(\B^D_{R_0}(\globmin)\bigr)^c$.
    Then there exists $\bar\ell\in\{1,\dots,L\}$ such that $\N{\CB_{\bar{\ell}}^\top x - \CB_{\bar{\ell}}^\top \globmin}_{\infty, d_{\bar{\ell}}} > R_0$.
    Hence, $x_{\bar\ell} = \CB_{\bar{\ell}}^\top x \in \big( \B^{d_{\bar{\ell}}}_{R_0}(\CB_{\bar{\ell}}^\top\globmin) \big)^c$.    
    Using the additive separability as of Assumption~\ref{ass:add_sep} together with Lemma~\ref{lem:rel_Eunder_Elunder} in the first step, and the new condition \eqref{ass:invcont_QLPl2_new} to obtain the first inequality in the third line, we have
    \begin{equation}
    \begin{split}
        \CE(x) - \CEunder
        &= \sum_{\ell=1}^L \left( \CE_\ell(\CB_\ell^\top x)-\CEunder_\ell \right) 
        = \left( \CE_{\bar\ell}(\CB_{\bar\ell}^\top x)-\CEunder_{\bar\ell} \right) + \sum_{\ell\not=\bar\ell} \left( \CE_\ell(\CB_\ell^\top x)-\CEunder_\ell \right) \\
        &\ge \CE_{\bar{\ell}}(\CB_{\bar{\ell}}^\top x)-\CEunder_{\bar{\ell}}  \\
        &> \widetilde{\CE}_{\infty,\bar{\ell}} 
        \ge \min_\ell \widetilde{\CE}_{\infty,\ell},
    \end{split} 
    \end{equation}
    which concludes the proof.
\end{proof}

\begin{example}
    \label{ex:ICP_worstcase}
    In this example, we consider an objective function \(\mathcal{E}\) that is additively separable as in Assumption \ref{ass:add_sep} with $L=2$,
    and such that the constants appearing in Assumption \ref{ass:invcont_QLPl2} are well‑behaved in the first coordinates (corresponding to component $\ell=1$) but not the last coordinate (corresponding to $\ell=2$).
    
    Let $D \gg 1$, and set $d_1=D-1$, $d_2 = 1$.
    We define 
    \begin{equation}
        \CE(x) = \CE_1(x_1,\ldots,x_{D-1}) + \CE_2(x_D),
    \end{equation}
    for any $x \in \RD$
    with
    \begin{equation}
    \label{eq:ICP_worstcase}
    \begin{split}
        \CE_1(v) &= 2 \N{v}^2_{2,D-1} \quad \text{for all }v \in \R^{D-1},\\
        \CE_2(v) &= \abs{v}^p \quad \text{or} \quad \CE_2(v) = v^2-c_D \cos(2\pi v) + c_D  \quad \text{for all }v \in \R,
    \end{split}
    \end{equation}
    where $p \gg 1$ in the first case, or $c_D \in \R$ in the second.
    (Note, that the second choice of $\CE_2$ in \eqref{eq:ICP_worstcase} corresponds to the classical one‑dimensional Rastrigin function with amplitude coefficient $c_D$).
    It is easy to verify, that $\CE$ admits a unique minimizer $\globmin = 0 \in \RD$,
    and that $\CEunder_1=\CEunder_2=0$.
    
    For all $v$ in a neighborhood of $\CB_1^\top \globmin = 0 \in \R^{D-1}$ and for the choices $\nu_1=1/2$ and $\eta_1=1$, it holds that
        \begin{equation}
        \label{eq:}
            \N{v}_{\infty,D-1} \le \N{v}_{2,D-1} \le \sqrt{2} \N{v}_{2,D-1} = 
            \frac{1}{\eta_1} (\CE_1(v)-\CEunder_1)^{\nu_1},
        \end{equation}
    which corresponds to \eqref{ass:invcont_QLPl2_1} holding globally for $\ell=1$.
    Hence, $\CE_1$ is well-conditioned, as the exponent $\nu_1$ corresponds to the well-behaved quadratic growth condition \cite{anitescu2000degenerate, necoara2019linear}.
        
    On the other hand, for all $v$ in a neighborhood of $\CB_2^\top \globmin = 0 \in \R$, and for the first choice of $\CE_2$ in \eqref{eq:ICP_worstcase},  condition \eqref{ass:invcont_QLPl2_1} translates to 
        \begin{equation}
    \label{eq:ICP_worstcase_proof1}
            \abs{v} \le \frac{1}{\eta_2} (\CE_2(v)-\CEunder_2)^{\nu_2} = \frac{1}{\eta_2} \abs{v}^{p\nu_2},
        \end{equation}
        which forces $\nu_2 = 1/p \ll 1$ and $\eta_2 \approx 1$. 
        Consequently, \(\CE_{2}\) grows according to \(v\mapsto|v|^{1/p}\) with the exponent \(1/p\) being very small. This extremely slow growth makes the function almost flat in a neighborhood of its minimizer, so the behavior of the objective in this component is poorly conditioned.
    An analogous condition to that in \eqref{eq:ICP_worstcase_proof1} can be derived for the second choice of $\CE_2$ in \eqref{eq:ICP_worstcase}.
    Certainly, this $\nu_2$ is worse than in the quadratic growth case, hence leading to the same conclusion of a poorly conditioned behavior of  $\CE_2$.
    
    Since Lemma \ref{lem:assEl_onE_ICP_general} shows that $\nu = \min \{\nu_1,\nu_2\} = \nu_2 = 1/p$ needs to be taken,
    the global tractability condition in Assumption~\ref{ass:invcont_QLP_glob} would employ the constant \(\nu = 1/p\), thereby overlooking the favorable behavior of the function in the first \(D-1\) coordinates and failing to exploit the favorable structure of $\CE$ in the first component.
\end{example}

%%%%%%%%%%%%%%%%%%%%%%%%%%%%%%%%%%%%%%%%%%%%%%%%%%
%%%%%%%%%% Section %%%%%%%%%%%%%%%%%%%%%%%%%%%%%%%
%%%%%%%%%%%%%%%%%%%%%%%%%%%%%%%%%%%%%%%%%%%%%%%%%%
\section{Discussion of the main result}
\label{sec:mainresults}
In Theorem~\ref{thm:conv_dlg1_Vk_micro_nonrigorous} in the introduction we presented the main result of this paper as an informal statement.
The purpose of this section is to provide with Theorem~\ref{thm:conv_dlg1_Vk_micro} and Corollary~\ref{thm:conv_dlg1_Vk_micro_epstot} a precise formulation for it, together with a rigorous proof in Section~\ref{sec:mainresults_convmicro}.
To this end,
let us first introduce auxiliary continuous-time processes related to \eqref{eq:aCBO_micro_EM}, discuss their well-posedness as well as several intermediate results.

\paragraph{Continuous-time formulation of the CBO dynamics and its mean-field perspective.}
The interacting discrete-time particle system~\eqref{eq:aCBO_micro_EM} can be regarded as an Euler-Maruyama time discretization \cite{higham2001algorithmic,graham2013stochastic} of the continuous-time system of stochastic differential equations
\begin{equation}
	\label{eq:aCBO_micro}
		dX^i_t = -\lambda\left(X^i_t - \conspoint{\empmeasure{t}}\right) dt + \sigma \sum_{k=1}^D \left(X^i_t - \conspoint{\empmeasure{t}}\right)_k d(B^{(i)}_t)_k e_k^D,
        \qquad i \in \{ 1, \ldots, N \},
\end{equation}
where $\{B^{(i)}_t\}_{i \in \{ 1, \ldots, N \}}$ denote $D$-dimensional independent Brownian processes, and where the consensus point $x_{\alpha}$ is defined as in \eqref{eq:conspoint}.
To gain a theoretical understanding of CBO,
we analyze the macroscopic behavior of the agent density through a mean-field limit associated with the particle dynamics instead of investigating the microscopic system~\eqref{eq:aCBO_micro}.
More precisely, since the propagation of chaos assumption \cite{mckean1967propagation,sznitman1991propagation} holds for CBO~\cite{huang2021MFLCBO,fornasier2024consensus,gerber2025mean},
we consider the McKean mono-particle process
\begin{equation}
\label{eq:aCBO_McKean}
		d\Xbar{t} = -\lambda\left(\Xbar{t}-\conspoint{\rho_t}\right)dt
	+ \sigma \sum_{k=1}^D \left(\Xbar{t}-\conspoint{\rho_t}\right)_k d(B_t)_ke_k^D,
\end{equation}
where the law $\rho_t = \Law(\Xbar{t})$ satisfies
the nonlinear nonlocal Fokker-Planck equation
\begin{equation}
    \label{eq:aCBO_mf}
    \partial_t \rho_t = \lambda \tn{div}\left((x-\conspoint{\rho_t})\rho_t\right) +\frac{\sigma^2}{2} \sum_{k=1}^D \partial_{kk}\left((x-\conspoint{\rho_t}\right)_k^2 \rho_t)
\end{equation}
in a weak sense, see Definition \ref{def:fokker_planck_weak_sense}.

\paragraph{Separation of the anisotropic CBO dynamics.}
If the objective function $\CE$ satisfies Assumption~\ref{ass:add_sep}, i.e., if it is additively separable,
we show in Theorem~\ref{thm:separation_anisotropicCBO_dynamics} that the $D$-dimensional anisotropic CBO dynamics~\eqref{eq:aCBO_McKean} separates into $L$ independent and separate $d_\ell$-dimensional dynamics of the form
\begin{subequations}
	\label{eq:aCBO_McKean_sepl}
	\begin{align}
	d\Xbar{t}^\ell &=
	-\lambda \left(\Xbar{t}^\ell - \conspointcoordinate{\rho_t^\ell}\right)dt
	+\sigma \sum_{k=1}^{d_\ell}\left(\Xbar{t}^\ell - \conspointcoordinate{\rho_t^\ell}\right)_kd(B^{\ell}_t)_k e_k^{d_\ell}\\
	\conspointcoordinate{\rho_t^\ell} &\coloneqq \int_{\Rdl} x \frac{\omegaa^\ell(x)}{\N{\omegaa^\ell}_{L_1(\rho_t^\ell)}}d\rho_t^\ell(x), \quad\text{with}\quad \omegaa^\ell(x)\coloneqq\exp(-\alpha\CE_\ell(x)), \label{def:xalphadl}
	\end{align}
\end{subequations}
with initial data~$\Xbar{0}^\ell \sim \rho^\ell_0 \coloneqq \CB_{\ell}^\top\#\rho_0 \in \CP(\Rdl)$ (here, $\#$ denotes the pushforward measure of $\rho_0$ under $\CB_{\ell}^\top$), Brownian motions $B^\ell_t \coloneqq \CB_{\ell}^\top B_t \in \Rdl$, and where $\rho^\ell_t = \Law(\Xbar{t}^\ell)$  for $\ell \in \{ 1, \ldots, L \}$.
This separation of the dynamics arises from the anisotropic noise structure and does not occur in the isotropic setting.

By exploiting the separability of the objective function, the system reduces from a high-dimensional dynamics to a set of $L$ independent lower-dimensional dynamics. 

\paragraph{Weak solutions and well-posedness.}
Before making this separation of the anisotropic CBO dynamics~\eqref{eq:aCBO_McKean} into its component dynamics~\eqref{eq:aCBO_McKean_sepl} rigorous in Theorem~\ref{thm:separation_anisotropicCBO_dynamics},
let us discuss the well-posedness of these equations in the subsequent remark.

\begin{definition}[Weak solution]
    \label{def:fokker_planck_weak_sense}
    Let $d \ge 1$ be a given dimension and $\conspoint{\dummy}$ be defined as in \eqref{eq:conspoint} for a general cost $\CE:\Rd\rightarrow\bbR$.
    Let $\mu_0 \in \CP(\Rd)$ and $T > 0$.
	We say that a measure $\mu\in\CC([0,T],\CP(\Rd))$ satisfies the Fokker-Planck equation
    \begin{equation}
    \label{eq:aCBO_mf_d}
        \partial_t \mu_t = \lambda \tn{div}\left((x-\conspoint{\mu_t})\mu_t\right) +\frac{\sigma^2}{2} \sum_{k=1}^d \partial_{kk}\left((x-\conspoint{\mu_t})_k^2 \mu_t\right)
    \end{equation}
    with initial condition $\mu_0$ in the weak sense in the time interval $[0,T]$, if we have for all $\phi \in \CC_*^2(\Rd)\coloneqq\{\phi\in\CC^2(\Rd) : \abs{\partial_k\phi(x)}\leq c(1+|x_k|) \text{ and } \N{\partial_{kk}^2 \phi}_{\infty,d}<\infty \text{ for all } k \in \{1,\dots,d\} \text{ and some } c>0\}$
    and all $t \in (0,T)$
	\begin{equation}                      
        \label{eq:aCBO_mf_weak}
		\frac{d}{dt}\int \phi(x) \,d\mu_t(x)
        =
		- \lambda\int \sum_{k=1}^d (x - \conspoint{\mu_t})_{k} \partial_{k} \phi(x) \, d\mu_t(x)
		+ \frac{\sigma^2}{2} \int \sum_{{k}=1}^d \left(x-\conspoint{\mu_t}\right)_{k}^2 \partial^2_{{kk}} \phi(x) \,d\mu_t(x)
	\end{equation}
	and $\lim_{t\rightarrow 0}\mu_t = \mu_0$ pointwise.
\end{definition}

\begin{remark}[Well-posedness of \eqref{eq:aCBO_McKean} and \eqref{eq:aCBO_McKean_sepl}]
\label{rem:implications_wp}
If $\CE$ satisfies Assumption \ref{ass:well-posedness_E}, then according to 
\cite[Theorem~3.1]{carrillo2018analytical} or \cite[Theorem~1]{fornasier2024consensus} there exists a unique nonlinear process $\overbar V \in \CC([0,T],\RD)$ satisfying \eqref{eq:aCBO_McKean}. The associated law $\rho = \Law(\overbar V)$ has regularity $\rho \in \CC([0,T], \CP_4(\RD))$ and is a weak solution to the Fokker-Planck equation \eqref{eq:aCBO_mf}.

With the same arguments, if $\CE_{\ell}$ satisfies Assumption \ref{ass:well-posedness_El}, then there exists a nonlinear process $\overbar{V}^{\ell} \in \CC([0,T],\Rdl)$ satisfying \eqref{eq:aCBO_McKean_sepl} and $\rho^{\ell}$ such that $\rho^{\ell} = \Law(\overbar{V}^\ell)$ belongs to $\CC([0,T], \CP_4(\Rdl))$.

As we will see in Theorem~\ref{thm:separation_anisotropicCBO_dynamics}, the processes $\overbar{V}^{\ell} \in \CC([0,T],\Rdl)$ and the measures $\rho^{\ell} = \Law(\overbar{V}^\ell)$ can be used to construct $\overbar V \in \CC([0,T],\RD)$ and $\rho = \Law(\overbar V)$, respectively.
Hence, for additively separable functions $\CE$, it suffices to require regularity conditions on the lower-dimensional functions $\CE_\ell$ to guarantee existence and uniqueness of a solution of both the mean-field dynamics \eqref{eq:aCBO_McKean} and \eqref{eq:aCBO_mf}.
At the same time,
Lemma \ref{lem:assEl_onE_wellp} shows that Assumption \ref{ass:well-posedness_El} implies Assumption \ref{ass:well-posedness_E}. Consequently, in view of the above discussion, we conclude that Assumption \ref{ass:well-posedness_El} is sufficient to guarantee the well‑posedness of both \eqref{eq:aCBO_McKean} and \eqref{eq:aCBO_McKean_sepl} and the respective Fokker-Planck equations.
\end{remark}

\paragraph{Proof strategy.}
Let us now outline the proof strategy leading to the rigorous formulation of the main result of this paper.
Theorem \ref{thm:separation_anisotropicCBO_dynamics} (presented in Section \ref{sec:mainresults_sep} and proven in Section~\ref{sec:mainresults_sep_proof}) rigorously demonstrates the separation of the anisotropic CBO dynamics for additively separable objective functions.
This separation provides insight into how the anisotropic CBO method exploits the additively separable structure of the objective function~$\CE$. 
With Theorem~\ref{thm:conv_dlg1_Vk_micro} and Corollary~\ref{thm:conv_dlg1_Vk_micro_epstot} (discussed and proven in Section \ref{sec:mainresults_convmicro})  we then present the rigorous formulation of Theorem~\ref{thm:conv_dlg1_Vk_micro_nonrigorous}, constituting the main result of this paper.
The proof of Theorem~\ref{thm:conv_dlg1_Vk_micro} relies on two central ingredients.
\begin{itemize}
    \item Theorem~\ref{thm:conv_dlg1_Vk} (stated in Section \ref{sec:mainresults_convmf} and proven in Section~\ref{sec:mainresults_convmf_proof}) which shows convergence of the anisotropic CBO dynamics~\eqref{eq:aCBO_McKean} and \eqref{eq:aCBO_mf} in the mean-field sense~\cite[Theorem~2]{fornasier2021convergence} under weaker assumptions and with more stringent hyperparameter choices, achieved by exploiting additively separable structure of $\CE$ (provided that such is available).
    
    In particular, if $\CE$ is additively separable, the hyperparameter~$\alpha$ can be chosen independently of the ambient dimension $D$, depending instead only on the intrinsic dimension $\intrd$.
    Analogously, it does not depend on properties of the $\CE$ as summarized in Assumption \ref{ass:invcont_QLP_glob} but only on the corresponding Assumptions~\ref{ass:invcont_QLPl2} on the lower-dimensional components $\CE_\ell$. 
    Besides allowing for more stringent requirements on the hyperparameters, this allows the convergence result to hold under less restrictive conditions than in the general case (see Subsection~\ref{sec:relEandEl_relICP}).
    \item The strong global convergence result for the CBO algorithm established in \cite{bonandin2025strong}.
\end{itemize}

%%%%%%%%%%%%%%%%%%%%%%%%%%%%%%%%%%%%%%%%%%%%%%%%%%%%%%%%
%%%%%%%%%%%%%%%%%%%%%%%%%%%%%%%%%%%%%%%%%%%%%%%%%%%%%%%%
% \newpage
\subsection{Separability of anisotropic CBO}
\label{sec:mainresults_sep}

This section is concerned with the separation of the full dynamics \eqref{eq:aCBO_McKean} into the $L$ component dynamics \eqref{eq:aCBO_McKean_sepl},
and analogously for \eqref{eq:aCBO_mf}.
The proof is deferred to Section~\ref{sec:mainresults_sep_proof}.

\begin{theorem}   
\label{thm:separation_anisotropicCBO_dynamics}
    Let $\CE:\RD\rightarrow\R$ be additively separable as in Assumption \ref{ass:add_sep}, and let $\CE_\ell$ satisfy Assumption~\ref{ass:well-posedness_El} for all $\ell \in \{ 1, \ldots, L \}$. 
    Let $T>0$ and $\rho_0\in\CP_4(\RD)$.
	Moreover, suppose that the measures $\rho^\ell_0\coloneqq\CB_{\ell}^\top\#\rho_0$, $\ell \in \{ 1, \ldots, L \}$, are statistically independent.
	Then, denoting by $\overbar{X} =(\Xbar{t})_{t\in[0,T]}$ the solution to the full dynamics~\eqref{eq:aCBO_McKean} and by $\overbar{X}^\ell=(\Xbar{t}^\ell)_{t\in[0,T]}$, $\ell \in \{ 1, \ldots, L \}$, the solutions to the component dynamics~\eqref{eq:aCBO_McKean_sepl},
    we have
	\begin{equation}
        \bbE \left(
		\sup_{t \in [0,T]} \N{\Xbar{t}-\sum_{\ell=1}^L \CB_\ell\Xbar{t}^\ell}_{2,D}^2 \right)
        = 0
	\end{equation}
    and therefore that 
    \begin{equation}
        \rho_{[0,T]}
        = \rho^1_{[0,T]} \otimes \cdots \otimes \rho^L_{[0,T]},
    \end{equation}
    where $\rho_{[0,T]}$ and $\rho^\ell_{[0,T]}$ for $\ell \in \{ 1, \ldots, L \}$ are the path-wise mean-field laws over $[0,T]$, that is $\rho_{[0,T]} \coloneqq \Law(\overbar{X}) \in \CP(\CC([0,T],\RD))$ and $\rho^\ell_{[0,T]} \coloneqq \Law(\overbar{X}^\ell) \in \CP(\CC([0,T],\Rdl))$.
\end{theorem}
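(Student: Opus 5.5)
The plan is to construct the full process directly from the component processes and then invoke pathwise uniqueness of \eqref{eq:aCBO_McKean}. We realize all processes on one probability space. Take the $D$-dimensional Brownian motion $B$ driving \eqref{eq:aCBO_McKean} and the initial datum $\Xbar{0}\sim\rho_0$. Set $B^\ell_t\coloneqq\CB_\ell^\top B_t$, which are independent $d_\ell$-dimensional Brownian motions because the $\CI_\ell$ are disjoint, and $\Xbar{0}^\ell\coloneqq\CB_\ell^\top\Xbar{0}\sim\rho_0^\ell$. By hypothesis the $\Xbar{0}^\ell$ are independent, and they are independent of $B$. By Remark~\ref{rem:implications_wp} (Assumption~\ref{ass:well-posedness_El} and $\rho_0^\ell\in\CP_4(\Rdl)$, since $\rho_0\in\CP_4$), each \eqref{eq:aCBO_McKean_sepl} has a unique strong solution $\Xbar{}^\ell$ driven by $(\Xbar{0}^\ell,B^\ell)$. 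We then define $Y_t\coloneqq\sum_{\ell=1}^L\CB_\ell\Xbar{t}^\ell$ and let $\mu_t\coloneqq\Law(Y_t)$. Each $\Xbar{}^\ell$ is a measurable functional of $(\Xbar{0}^\ell,B^\ell)$, and these pairs are mutually independent. Hence the component processes are independent, so $\Law(Y)=\rho^1_{[0,T]}\otimes\cdots\otimes\rho^L_{[0,T]}$ on path space, and in particular $\mu_t=\rho_t^1\otimes\cdots\otimes\rho_t^L$.

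The key step, which I expect to be the main computation, is the factorization of the consensus point for a product measure under a separable $\CE$:
\begin{equation*}
\CB_\ell^\top\conspoint{\mu_t}=\conspointcoordinate{\rho_t^\ell}\quad\text{for all }\ell .
\end{equation*}
Indeed, $\omegaa(x)=\prod_{\ell}\omegaa^\ell(\CB_\ell^\top x)$, so by Fubini $\N{\omegaa}_{L^1(\mu_t)}=\prod_\ell\N{\omegaa^\ell}_{L^1(\rho_t^\ell)}$. In the numerator $\int\CB_\ell^\top x\,\omegaa(x)\,d\mu_t(x)$, all factors $\ell'\neq\ell$ integrate to their normalizations and cancel. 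Integrability is guaranteed by the fourth moments together with the lower bound \ref{ass:well-posedness_El_lowerbound}, which bounds $\omegaa^\ell$. With this identity, we apply $\CB_\ell^\top$ to the candidate equation. The drift $\CB_\ell^\top(Y_t-\conspoint{\mu_t})$ equals $\Xbar{t}^\ell-\conspointcoordinate{\rho^\ell_t}$. For the noise, the diagonal anisotropic structure gives
\begin{equation*}
\CB_\ell^\top\sum_{k=1}^D(Y_t-\conspoint{\mu_t})_k\,d(B_t)_k\,e_k^D=\sum_{k=1}^{d_\ell}(\Xbar{t}^\ell-\conspointcoordinate{\rho_t^\ell})_k\,d(B^\ell_t)_k\,e_k^{d_\ell},
\end{equation*}
since each coordinate $k$ lies in exactly one $\CI_\ell$. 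Summing $\CB_\ell$ times the component equations therefore shows that $Y$ solves \eqref{eq:aCBO_McKean} with $\rho_t$ replaced by $\mu_t=\Law(Y_t)$, with $Y_0=\Xbar{0}$. So $Y$ is a solution of the McKean SDE.

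Finally, Lemma~\ref{lem:assEl_onE_wellp} shows that $\CE$ satisfies Assumption~\ref{ass:well-posedness_E}. By Remark~\ref{rem:implications_wp} (\cite[Theorem~3.1]{carrillo2018analytical}), \eqref{eq:aCBO_McKean} then has a unique solution in $\CC([0,T],\RD)$ with law in $\CC([0,T],\CP_4)$. The constructed $Y$ has this regularity, inherited from the components. Pathwise uniqueness for the same initial datum and Brownian motion therefore gives $\Xbar{t}=Y_t$ for all $t$ almost surely, by continuity of paths. This yields $\bbE\sup_t\|\Xbar{t}-Y_t\|_{2,D}^2=0$ and $\rho_{[0,T]}=\Law(Y)$, which is the product of the component path laws. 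The delicate point to check is that uniqueness in the cited result is genuinely pathwise for the nonlinear SDE. It is: the fixed-point argument identifies any solution's law with the unique fixed point, after which the SDE is linear with given coefficients and is pathwise unique by the Lipschitz estimate.
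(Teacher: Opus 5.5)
Your proof is correct and is essentially the paper's argument. It uses the same coupling $B^\ell=\CB_\ell^\top B$, $\Xbar{0}^\ell=\CB_\ell^\top\Xbar{0}$, the same candidate process $\sum_\ell\CB_\ell\Xbar{t}^\ell$ with product law, and the same key identity $\CB_\ell^\top\conspoint{\rho^1_t\otimes\cdots\otimes\rho^L_t}=\conspointcoordinate{\rho^\ell_t}$ (Lemma~\ref{lem:sepmeasure_implies_sepcons}).

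The only difference is packaging. Where you invoke pathwise uniqueness of \eqref{eq:aCBO_McKean}, the paper re-derives it for this specific pair. It runs a BDG--Gr\"onwall estimate on $\bbE\sup_{t}\N{\Xbar{t}-\sum_\ell\CB_\ell\Xbar{t}^\ell}_{2,D}^2$, combined with the stability bound $\N{\conspoint{\rho_t}-\conspoint{\rho^1_t\otimes\cdots\otimes\rho^L_t}}_{2,D}\le c\,W_2(\rho_t,\rho^1_t\otimes\cdots\otimes\rho^L_t)$ from \cite[Lemma~3.2]{carrillo2018analytical}, which is exactly the mechanism behind that uniqueness. Your observation that the component paths are measurable functionals of the independent pairs $(\Xbar{0}^\ell,B^\ell)$ also makes the path-space product identity somewhat more transparent than in the paper.
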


Theorem~\ref{thm:separation_anisotropicCBO_dynamics} provides a separation result at the level of path-wise mean-field laws over $[0,T]$ for the mean-field CBO dynamics~\eqref{eq:aCBO_McKean}, which implies a point-wise-in-time result, that we use in Proposition~\ref{prop:QLPl_Vk} and in the proof of Theorem~\ref{thm:conv_dlg1_Vk}.
Let us mention, however, that the separation property of Theorem~\ref{thm:separation_anisotropicCBO_dynamics} is a mean-field phenomenon and, in general, does not hold for the finite-particle anisotropic CBO dynamics~\eqref{eq:aCBO_micro_EM}.

\begin{remark}[Computational complexity in the case of additive separability]
    The key to the results of this manuscript is Theorem~\ref{thm:separation_anisotropicCBO_dynamics} stating that for additively separable objective functions $\CE$ of the form~\eqref{eq:add_sep},
    the full $D$-dimensional anisotropic CBO dynamics separates into $L$ lower-dimensional $d_\ell$-dimensional dynamics.
    This is a fundamental simplification of the optimization problem since it is substantially more efficient to solve $L$  $d_\ell$-dimensional problems than a single $D$-dimensional problem.

    To simplify the following discussion, suppose we are tackling a discrete optimization problem, with $P\gg1$ options per dimension.
    If no prior information about the objective function is available, it is necessary to evaluate it at all $P^D$ admissible points  in order to find the minimizer.
    To contrast this with an extreme example, consider now a fully separable function $\CE$ of the form $\CE(x) = \sum_{\ell=1}^D \CE_\ell(x_\ell)$,
    where $d_\ell =1$ for all $\ell \in \{1,\ldots,L\}$.
    In this case, the objective function needs to be evaluated at only $D \cdot P$ points, which is much smaller than $P^D$.
    This argument extends to the continuous case and to the scenario in which not all $d_\ell$ are equal to one.
    In the latter, the leading order is~$\mathcal{O}(P^{\intrd})$, where $\intrd$ is the  intrinsic dimension given as in \eqref{def:intrinsic_dim}.
\end{remark}

%%%%%%%%%%%%%%%%%%%%%%%%%%%%%%%%%%%%%%%%%%%%%%%%%%%%%%%%
%%%%%%%%%%%%%%%%%%%%%%%%%%%%%%%%%%%%%%%%%%%%%%%%%%%%%%%%
\subsection{Global convergence in mean-field law}
\label{sec:mainresults_convmf}

In this section, we first show convergence in mean-field law of the anisotropic CBO dynamics \eqref{eq:aCBO_McKean}
by analyzing the energy functional $\CV_{\infty}$ defined for $\indivmeasure \in \CP_2(\RD)$ according to
\begin{equation}
\label{def:Vinfty}
    \CV_{\infty}(\indivmeasure) \coloneqq \max_{k \in \{ 1, \ldots, D \}} \CV_k(\indivmeasure),
\end{equation}
where 
\begin{equation}
    \label{def:Vk}
    \CV_k(\indivmeasure) \coloneqq \frac{1}{2} \int_{\RD} (x-\globmin )^2_k \,d \indivmeasure(x)
\end{equation}
for $k \in \{ 1, \ldots, D \}$.

\begin{theorem}[Global convergence in mean-field law]
\label{thm:conv_dlg1_Vk}
Let $\CE:\RD\rightarrow\R$ be additively separable as in Assumption \ref{ass:add_sep}, and let $\CE_\ell$ satisfy Assumption~\ref{ass:invcont_QLPl2} for all $\ell \in \{ 1, \ldots, L \}$. 
Let $\rho_0\in\CP_4(\RD)$ be such that $\globmin \in \tn{supp}(\rho_0)$. 
Define $\CV_\infty$ as in \eqref{def:Vinfty}. 
Fix any $\varepsilon\in \left(0, \CV_{\infty}(\rho_0)\right)$ and $\vartheta\in(0,1)$, choose parameters $\lambda, \sigma>0$ such that $2\lambda>\sigma^2$, and define
    \begin{equation}
    \label{def:Tstar}
        T^*\coloneqq\frac{1}{(1-\vartheta)(2\lambda-\sigma^2)}\log\left(\frac{\CV_\infty(\rho_0)}{\varepsilon}\right).
    \end{equation}
Let $\rho \in \CC([0,T^*],\CP_4(\RD))$ be a weak solution to the Fokker-Planck equation associated to the full dynamics \eqref{eq:aCBO_McKean} on the time interval $[0,T^*]$ with initial condition $\rho_0$ and let $\rho^{\ell} \in \CC([0,T^*],\CP_4(\Rdl))$, for $\ell \in \{ 1, \ldots, L \}$, be the solution to \eqref{eq:aCBO_McKean_sepl} with initial condition $\rho^\ell_0=\CB_{\ell}^\top\#\rho_0$, and such that $\rho_t = \rho^1_t \otimes \cdots \otimes \rho^L_t$ for any $t \in [0,T^{*}]$.
Then, there exists $\alpha_0>0$ of the form 
$\alpha_0 = \max_{\ell \in \{ 1, \ldots, L \}} \alphazerol$, 
with $\alphazerol >0$ as in \eqref{def:alpha0l}, such that for all $\alpha>\alpha_0$, we have
    \begin{equation}
    \label{eq:Vinfty_accuracy}
        \CV_{\infty}(\rho_T)=\varepsilon \quad \text{ with } T\in \left[\frac{1-\vartheta}{1+\vartheta/2}T^*,T^*\right].
    \end{equation}
Here, $\alphazerol$ depends, 
among problem-dependent quantities, on $d_\ell$ and $(\eta_\ell, R_{0,\ell},\CE_{\infty,\ell}, \nu_\ell)$ of Assumption~\ref{ass:invcont_QLPl2}, see Remark~\ref{rem:crucial_remark} for insights regarding the qualitative dependencies of $\alphazerol$ on quantities that are associated with $\CE_\ell$.
In particular, however, $\alphazerol$ is independent of both $D$ and parameters from Assumption~\ref{ass:invcont_QLP_glob}.
\end{theorem}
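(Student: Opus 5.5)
The plan is to adapt the mean-field convergence argument of \cite[Theorem~2]{fornasier2021convergence} (see also \cite{fornasier2024consensus,riedl2024perspective}) coordinate-wise, and to run it component by component through the product structure $\rho_t=\rho^1_t\otimes\cdots\otimes\rho^L_t$. The first step uses this product structure and the definition \eqref{def:conspoint}. Together with $\omegaa(x)=\prod_\ell\omegaa^\ell(\CB_\ell^\top x)$, it gives $\CB_\ell^\top\conspoint{\rho_t}=\conspointcoordinate{\rho^\ell_t}$. Hence for every coordinate $k\in\CI_\ell$ the quantity $\CV_k(\rho_t)$ depends only on $\rho^\ell_t$ and on the $d_\ell$-dimensional consensus point.

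The second step tests the weak formulation \eqref{eq:aCBO_mf_weak} with $\phi(x)=\frac12(x-\globmin)_k^2\in\CC^2_*$. This yields
\begin{equation*}
\frac{d}{dt}\CV_k(\rho_t)=-(2\lambda-\sigma^2)\CV_k(\rho_t)+\lambda\int (x-\globmin)_k(\conspoint{\rho_t}-\globmin)_k\,d\rho_t+\frac{\sigma^2}{2}(\conspoint{\rho_t}-\globmin)_k^2 .
\end{equation*}
By Cauchy--Schwarz this leads to the differential inequalities
\begin{equation*}
\frac{d}{dt}\CV_k\le-(2\lambda-\sigma^2)\CV_k+\sqrt2\lambda\sqrt{\CV_k}\,\N{\conspoint{\rho_t}-\globmin}_{\infty,D}+\frac{\sigma^2}{2}\N{\conspoint{\rho_t}-\globmin}^2_{\infty,D},
\end{equation*}
together with the matching lower bound. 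The key point is that $\N{\conspoint{\rho_t}-\globmin}_{\infty,D}=\max_\ell\N{\conspointcoordinate{\rho_t^\ell}-\CB_\ell^\top\globmin}_{\infty,d_\ell}$. It therefore suffices to make each $d_\ell$-dimensional consensus error at most $c(\vartheta,\lambda,\sigma)\sqrt{\varepsilon}$ on $[0,T^*]$. Given that, $\CV_\infty$ decays at rate at least $(1-\vartheta)(2\lambda-\sigma^2)$ and at most $(1+\vartheta/2)(2\lambda-\sigma^2)$ as long as $\CV_\infty(\rho_t)\ge\varepsilon$. The standard continuity/contradiction argument, with $T$ the first hitting time of $\varepsilon$, then gives \eqref{eq:Vinfty_accuracy}.

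The third step controls the component consensus errors. Here I would invoke the quantitative Laplace principle \cite[Proposition~4.5]{fornasier2024consensus} in dimension $d_\ell$ for $\CE_\ell$ under Assumption~\ref{ass:invcont_QLPl2}, which is presumably the role of Proposition~\ref{prop:QLPl_Vk}. It bounds $\N{\conspointcoordinate{\rho^\ell_t}-\CB_\ell^\top\globmin}_{\infty,d_\ell}$ by $(q_\ell+\CE_{r_\ell})^{\nu_\ell}/\eta_\ell+\frac{e^{-\alpha q_\ell}}{\rho^\ell_t(\B^{d_\ell}_{r_\ell}(\CB_\ell^\top\globmin))}\int\N{v-\CB_\ell^\top\globmin}_{\infty,d_\ell}d\rho^\ell_t$. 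The remaining moment is bounded by $\sqrt{d_\ell\max_{k\in\CI_\ell}2\CV_k}$, which is where $d_\ell$, and not $D$, enters. I would choose $q_\ell,r_\ell$ so that the first term is at most half the target, and require $\alpha$ large enough to make the second term at most the other half.

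The main obstacle is a lower bound on the mass $\rho^\ell_t(\B^{d_\ell}_{r_\ell}(\CB_\ell^\top\globmin))$ uniform on $[0,T^*]$. I would follow \cite[Proposition~4.6]{fornasier2024consensus}, adapted to the anisotropic setting as in \cite{fornasier2021convergence}. One tests the component Fokker--Planck equation \eqref{eq:aCBO_mf_d} (valid for $\rho^\ell$ in dimension $d_\ell$) with a smooth mollifier supported in the ball, using $\CB_\ell^\top\globmin\in\mathrm{supp}(\rho^\ell_0)$, which follows from $\globmin\in\mathrm{supp}(\rho_0)$. This gives $\rho^\ell_t(\B_{r_\ell})\ge(\int\phi_{r_\ell}d\rho_0^\ell)e^{-p_\ell t}$ with $p_\ell$ depending on $d_\ell,\lambda,\sigma$, on the consensus-error bound already assumed (a bootstrap on $[0,T]$) and on $\CV_\infty(\rho_0)$. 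Inserting this gives an explicit $\alphazerol$ of the form \eqref{def:alpha0l}, depending only on $d_\ell$, the constants of Assumption~\ref{ass:invcont_QLPl2}, $\rho_0^\ell$ and $\varepsilon$. Setting $\alpha_0=\max_\ell\alphazerol$ closes the bootstrap simultaneously for all components, with no dependence on $D$ or on Assumption~\ref{ass:invcont_QLP_glob}.
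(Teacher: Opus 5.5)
Your proposal is correct and follows essentially the paper's route: coordinatewise evolution inequalities for $\CV_k$ lifted to $\CV_\infty$, the splitting $\CB_\ell^\top\conspoint{\rho_t}=\conspointcoordinate{\rho_t^\ell}$ from the product structure (Lemma~\ref{lem:sepmeasure_implies_sepcons}), a component-wise quantitative Laplace principle combined with the $d_\ell$-dimensional mass lower bound of Proposition~\ref{prop:lower_bound}, and the hitting-time/contradiction argument. Two minor points: the cross term in your identity should carry the factor $(\lambda-\sigma^2)$, not $\lambda$ (your inequality with $\sqrt{2}\lambda$ nonetheless holds, since $|\lambda-\sigma^2|\le\lambda$ whenever $\sigma^2<2\lambda$); and the paper applies the Laplace principle coordinate by coordinate (Proposition~\ref{prop:QLPl_Vk}), so the moment is bounded by $\sqrt{2\CV_k}$ rather than your $\sqrt{2d_\ell\CV_\infty}$, which in your version only adds a harmless $\log(d_\ell)/(2\qle)$ to $\alphazerol$ and keeps it independent of $D$.
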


The proof of this result is postponed to Section~\ref{sec:mainresults_convmf_proof}.

\begin{remark}
\label{rem:notnecass_wellpsep}
    The statement of Theorem~\ref{thm:conv_dlg1_Vk} assumes to be given measures $\rho \in \CC([0,T^*],\CP_4(\RD))$ and $\rho^{\ell} \in \CC([0,T^*],\CP_4(\Rdl))$, $\ell \in \{ 1, \ldots, L \}$, such that $\rho_t = \rho^1_t \otimes \cdots \otimes \rho^L_t$ for any $t \in [0,T^{*}]$.

    The required existence and regularity is ensured by classical well-posedness results, which hold as commented on in Remark~\ref{rem:implications_wp} for instance under Assumption~\ref{ass:well-posedness_El} in combination with Lemma~\ref{lem:assEl_onE_wellp} and if $\rho_0 \in \CP_4(\RD)$.
    The separation for any given time $t \in [0,T^*]$ is verified in Theorem \ref{thm:separation_anisotropicCBO_dynamics} under the additional statistical independence assumption on the initial conditions $\rho^\ell_0=\CB_{\ell}^\top\#\rho_0$.
\end{remark}

\begin{remark}
\label{rem:implications_ICP}
Theorem \ref{thm:conv_dlg1_Vk} is valid under Assumption \ref{ass:invcont_QLPl2} and proves convergence in mean-field law of anisotropic CBO for a choice of CBO parameters depending on $\intrd$ (or, more precisely, on $d_\ell$ for all $\ell \in \{1,\ldots,L\}$) rather than the ambient dimension~$D$. 
In view of Lemma \ref{lem:assE_onEl_ICP}, which shows that the tractability conditions in Assumption \ref{ass:invcont_QLP_glob} satisfied by $\CE$ imply the corresponding conditions in Assumption \ref{ass:invcont_QLPl2} for each $\CE_\ell$ with identical constants,
Theorem \ref{thm:conv_dlg1_Vk} captures the following crucial fact when compared to \cite[Theorem~2]{fornasier2021convergence} and \cite[Theorem~3.6]{riedl2024perspective} (see also the discussion after Remark~\ref{rem:crucial_remark} in Section~\ref{sec:intro}):
if additional structure of $\CE$ is known,
convergence can be obtained under weaker assumptions on the objective function and with better parameter choices (better in the sense that these choices do not depend on the ambient dimension and depend only on properties of the separable components).
Conversely, with Lemma \ref{lem:assEl_onE_ICP_general} showing that tractability of the component functions $\CE_\ell$ implies tractability of $\CE$ with worst‑case constants, \cite[Theorem~3.6]{riedl2024perspective} suggests for additively separable functions $\CE$ unnecessarily pessimistic hyperparameter choices due to not taking into account the structure of the objective.
Theorem \ref{thm:conv_dlg1_Vk}, in contrast, captures this.
In this respect, Theorem \ref{thm:conv_dlg1_Vk} provides a deeper insight for the function class of additively separable objectives.
\end{remark}

%%%%%%%%%%%%%%%%%%%%%%%%%%%%%%%%%%%%%%%%%%%%%%%%%%%%%%%%
%%%%%%%%%%%%%%%%%%%%%%%%%%%%%%%%%%%%%%%%%%%%%%%%%%%%%%%%
\subsection{Strong global convergence of the numerical scheme}
\label{sec:mainresults_convmicro}

We are now prepared to present the rigorous formulation of Theorem~\ref{thm:conv_dlg1_Vk_micro_nonrigorous} together with its proof. We recall that $\N{\dummy}_{\infty,D}$ denotes the standard $\ell^{\infty}$-norm, see the Section~\ref{sec:notation} for notations.

\begin{theorem}[Global convergence in probability]
\label{thm:conv_dlg1_Vk_micro}
    Let $\CE:\RD\rightarrow\R$ be additively separable as in Assumption~\ref{ass:add_sep} and fulfill Assumption~\ref{ass:well-posedness_E}, and let $\CE_\ell$ satisfy Assumption~\ref{ass:invcont_QLPl2} for all $\ell \in \{ 1, \ldots, L \}$.
    Let $\rho_0 \in \CP_6(\Rd)$ be such that $\globmin \in \tn{supp}(\rho_0)$ and initialize $\Xihat{0}\sim\rho_0$. Moreover, suppose that the measures $\rho^\ell_0=\CB_{\ell}^\top\#\rho_0$, $\ell \in \{ 1, \ldots, L \}$, are statistically independent.
    Define $\CV_{\infty}$ as in \eqref{def:Vinfty}.
    Fix a time step size $0 < \dt \le 1$ and a total number of iterations $H \in \bbN^+$, and let
    $\{\Xihat{h \dt}\}_{h\in \{0,\ldots,H\}}^{i\in\{1,\ldots,N\}}$ 
    denote the iterates generated by the anisotropic CBO algorithm~\eqref{eq:aCBO_micro_EM}.
    Let $\vartheta \in (0,1)$, and choose hyperparameters $\lambda, \sigma > 0$ satisfying $2\lambda > \sigma^2$. Fix $\acc > 0$. 
    Then, there exists a constant
    of the form $\alpha_0 = \max_{\ell \in \{1,\ldots,L\}} \alphazerol>0$  (as specified in Theorem \ref{thm:conv_dlg1_Vk})
    depending only on the intrinsic dimension $\intrd$ and the constants of Assumption~\ref{ass:invcont_QLPl2},
    such that for all $\alpha > \alpha_0$
    and for any $i \in \{1,\ldots,N\}$
    it holds
    \begin{equation}
    \label{eq:conv_dlg1_Vk_micro1}
        \N{\frac{1}{N} \sum_{i=1}^N \widehat{X}^{i}_{H\dt} - \globmin}_{\infty,D} \le \acc
    \end{equation}
    with probability larger than
    \begin{equation}
    \label{eq:conv_dlg1_Vk_micro2}
    \left( 1- \frac{8\intrd}{\acc^2}  \CV_\infty(\rho_0) \exp \left(-\left(1-\vartheta \right)\left(2\lambda-\sigma^2\right)H \dt \right) \right)^L - \frac{4}{\acc^2} \left( \cna  \dt + \cmfa N^{-1}\right).
    \end{equation}
    Here, the constants $\cna, \cmfa>0$ depend only on the constants appearing in Assumption~\ref{ass:well-posedness_E}, on the initial condition $\rho_0$, and (exponentially) on the parameters $\alpha$, $\lambda$ and $\sigma$ as well as on the time horizon $H\dt$; moreover, $\cna$  depends linearly on the ambient dimension $D$. Both constants are independent of both the number of particles $N$ and the time step $\dt$. 
\end{theorem}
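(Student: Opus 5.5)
We split the error of the empirical mean at the final iterate into two parts: a mean-field optimization error and an approximation error (time discretization plus finite particles). Let $\overbar X^i$ be $N$ i.i.d. copies of the McKean process \eqref{eq:aCBO_McKean}, each driven by the same Brownian motions and initial data as the $i$-th agent. By the triangle inequality,
\begin{equation*}
\N{\tfrac1N\textstyle\sum_i \widehat X^i_{H\dt}-\globmin}_{\infty,D}
\le \N{\tfrac1N\textstyle\sum_i (\widehat X^i_{H\dt}-\overbar X^i_{H\dt})}_{\infty,D}
+ \N{\tfrac1N\textstyle\sum_i \overbar X^i_{H\dt}-\globmin}_{\infty,D}.
\end{equation*}
It therefore suffices to show two things: the first term is at most $\acc/2$ with probability at least $1-\frac{4}{\acc^2}(\cna\dt+\cmfa N^{-1})$, and the second term is at most $\acc/2$ with probability at least $(1-\tfrac{8\intrd}{\acc^2}\CV_\infty(\rho_0)e^{-(1-\vartheta)(2\lambda-\sigma^2)H\dt})^L$. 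A union bound then gives \eqref{eq:conv_dlg1_Vk_micro2}.

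For the first term we bound $\|\cdot\|_\infty\le\|\cdot\|_2$ and use Jensen to reduce to $\frac1N\sum_i\bbE\|\widehat X^i_{H\dt}-\overbar X^i_{H\dt}\|_2^2$. We would not prove this bound ourselves. Instead we invoke the strong convergence result of \cite[Theorem~1.1/Corollary~1.2]{bonandin2025strong}, which bounds this quantity by $\cna\dt+\cmfa N^{-1}$ under Assumption~\ref{ass:well-posedness_E} and $\rho_0\in\CP_6$; the linear dependence of $\cna$ on $D$ comes from the anisotropic diffusion. Markov's inequality at level $(\acc/2)^2$ then yields the stated factor $4/\acc^2$.

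For the second term we work componentwise. Theorem~\ref{thm:separation_anisotropicCBO_dynamics} applies, using the statistical independence of the $\rho_0^\ell$ and Lemma~\ref{lem:assE_onEl_wellp} to pass the regularity assumptions to the $\CE_\ell$. It shows that $\CB_\ell^\top\overbar X^i_t$ coincides almost surely with the component process \eqref{eq:aCBO_McKean_sepl}, and that the blocks $\ell=1,\dots,L$ are independent, since the initial data are independent and the Brownian blocks $\CB_\ell^\top B$ are disjoint. Consequently the events
\[
A_\ell=\Bigl\{\N{\tfrac1N\textstyle\sum_i\CB_\ell^\top\overbar X^i_{H\dt}-\CB_\ell^\top\globmin}_{\infty,d_\ell}\le\acc/2\Bigr\}
\]
are independent, and $\bbP(\cap_\ell A_\ell)=\prod_\ell\bbP(A_\ell)$; the $\ell^\infty$ norm is the maximum over blocks. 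For each $\ell$ we use $\|\cdot\|_{\infty,d_\ell}^2\le\sum_{k\in\CI_\ell}|\cdot|_k^2$, Jensen over $i$, and Markov:
\[
\bbP(A_\ell^c)\le\frac{4}{\acc^2}\sum_{k\in\CI_\ell}\bbE(\overbar X_{H\dt}-\globmin)_k^2\le\frac{8d_\ell}{\acc^2}\CV_\infty(\rho_{H\dt})\le\frac{8\intrd}{\acc^2}\CV_\infty(\rho_{H\dt}).
\]

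The remaining, and main, step is a decay estimate $\CV_\infty(\rho_t)\le\CV_\infty(\rho_0)e^{-(1-\vartheta)(2\lambda-\sigma^2)t}$ valid for all $\alpha>\alpha_0=\max_\ell\alphazerol$. Theorem~\ref{thm:conv_dlg1_Vk} only states that the target accuracy is reached at some time $T\le T^*$. We would therefore go into its proof mechanism: the differential inequality $\frac{d}{dt}\CV_k\le-(1-\vartheta)(2\lambda-\sigma^2)\CV_k$, which holds as long as $\CV_\infty(\rho_t)>\varepsilon$, combined with the quantitative Laplace principle applied to each $\rho_t^\ell$ separately. Assumption~\ref{ass:invcont_QLPl2} controls each $\rho^\ell_t$, which is why $\alpha_0$ depends only on $d_\ell$ and not on $D$. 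We would apply this with $\varepsilon$ chosen as the accuracy target, namely $\varepsilon$ equal to the right-hand decay value at $H\dt$, so that $\alpha_0$ depends on $\varepsilon^{-1}\propto\intrd/\acc^2$ as in the informal theorem. If $\CV_\infty$ has already dropped below $\varepsilon$ before time $H\dt$, we still need the estimate to persist. To handle this we would either stop at the hitting time and note that the probability bound at $H\dt$ is implied by the same $\varepsilon$-bound, arguing monotonicity after hitting via the same Laplace-principle control, or else state the bound with $\max$ of $\varepsilon$ and the exponential. Justifying that the exponential bound holds up to $H\dt$ is the delicate point.
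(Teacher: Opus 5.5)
Your decomposition is the paper's own: triangle inequality, Markov plus the strong error estimate of \cite{bonandin2025strong} for the approximation term, block independence plus Markov for the mean-field term, then a union bound. Those steps are fine.

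The gap is the one you flag yourself: you never establish $\CV_\infty(\rho_{H\dt})\le\widetilde\varepsilon\coloneqq\CV_\infty(\rho_0)e^{-(1-\vartheta)(2\lambda-\sigma^2)H\dt}$. With your choice $\varepsilon=\widetilde\varepsilon$ one gets $T^*=H\dt$. Theorem~\ref{thm:conv_dlg1_Vk} only guarantees the decay \eqref{eq:upperbound_functional} on $[0,T_\alpha]$, with $T_\alpha\in[\xi T^*,T^*]$ and $\xi=(1-\vartheta)/(1+\vartheta/2)$. So if $T_\alpha<H\dt$, you have no information on $(T_\alpha,H\dt]$.

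Neither of your fallbacks closes this. \emph{Monotonicity after hitting} is not available from the proof of Theorem~\ref{thm:conv_dlg1_Vk}. Past $T_\alpha$ you no longer know the consensus-point bound $\SN{(\conspoint{\rho_t}-\globmin)_k}<C_\infty(t)$. You also lose the hypothesis $\sup_s\N{\conspoint{\rho_s}-\globmin}_{\infty,D}\le\beta$ that Proposition~\ref{prop:lower_bound} needs for the mass lower bound. A continuation/barrier argument at level $\varepsilon$ might be made to work, but it requires a bootstrap you do not supply. Stating the bound with a maximum changes the claimed probability.

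The missing idea is to choose the mean-field target smaller, so that the guaranteed window reaches $H\dt$. The paper takes
\[
\varepsilon=\CV_\infty(\rho_0)\bigl(\widetilde\varepsilon/\CV_\infty(\rho_0)\bigr)^{1/\xi},
\]
so that $T^*=H\dt/\xi$, i.e.\ $H\dt=\xi T^*\le T_\alpha$. The lower endpoint $T_\alpha\ge\xi T^*$ comes from the at-most-exponential decay \eqref{eq:lowerbound_functional}, which your proposal never uses. Evaluating \eqref{eq:upperbound_functional} at $t=H\dt$ then gives $\CV_\infty(\rho_{H\dt})\le\widetilde\varepsilon$ directly.

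Consequently $\alpha_0=\max_\ell\alphazerol$ must be computed with this smaller $\varepsilon$, not with $\varepsilon\propto\acc^2/\intrd$ as you suggest. This is the source of the exponent $(1+\vartheta/2)/(1-\vartheta)$ in \eqref{eq:def_varepsilon}.
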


\begin{proof}% [Proof of Theorem~\ref{thm:conv_dlg1_Vk_micro}]
Following the proof strategy of \cite[Theorem 1.1]{bonandin2025strong}, 
 we fix $0<\dt \le 1$, $H \in \bbN^+$ and $\vartheta \in (0,1)$, and
we define the time horizon
\begin{equation*}
        T^*
        = \frac{1}{(1-\vartheta)\big(2\lambda -  \sigma^2\big)} \log\left(\left(\frac{\CV_{\infty}(\rho_0)}{\widetilde\varepsilon}\right)^{1/\xi}\right),
    \end{equation*}
where $\xi \coloneqq (1-\vartheta)/(1+\vartheta/2) \in (0,1)$ and $\widetilde\varepsilon$ chosen such that $H\Delta t = \xi T^*$ holds. The latter can be realized through the selection 
\begin{equation*}
    \widetilde\varepsilon = \CV_{\infty}(\rho_0)\exp\left(-(1-\vartheta)\big(2\lambda - \sigma^2\big)H\Delta t\right).
\end{equation*}
Observing that the accuracy 
\begin{equation}
\label{eq:def_eps}
        \varepsilon = \CV_{\infty}(\rho_0) (\widetilde\varepsilon/\CV_{\infty}(\rho_0))^{1/\xi}
\end{equation}
satisfies $\varepsilon\in(0,\CV_{\infty}(\rho_0))$ and the time horizon $T^*$ is as in \eqref{def:Tstar}, we can apply Theorem~\ref{thm:conv_dlg1_Vk} to bound the error in the mean-field limit.
More precisely, using Equation~\eqref{eq:upperbound_functional}
in its proof we have that there exists a constant $\alpha_0$ of the form specified in~\eqref{def:alpha} and~\eqref{def:alpha0l} such that for all $\alpha > \alpha_0$ it holds 
\begin{equation}
    \label{eq:mainproof_step1}
    \CV_{\infty}(\rho_{H \Delta t}) \leq \CV_{\infty}(\rho_0)\exp\left(-(1-\vartheta)\big(2\lambda - \sigma^2\big)H\Delta t\right)
       =\widetilde\varepsilon.
\end{equation}

It remains to estimate the probability that \eqref{eq:conv_dlg1_Vk_micro1} holds, i.e., that the empirical average of the particle system $\{\Xihat{{H\dt}}\}^{i \in \{1,\ldots,N\}}$ lies within an $\ell^\infty$-distance $\acc>0$ from the global minimizer $\globmin$.
Therefore, 
let $\{\Xibar{{H\dt}}\}^{i \in \{1,\ldots,N\}}$ denote independent copies of the solution to the mean-field dynamics \eqref{eq:aCBO_McKean}, initialized at $\Xihat{0}$ and driven by the same Brownian motions as those appearing in \eqref{eq:aCBO_micro_EM}.
With a triangle inequality, we can  decompose the total error from \eqref{eq:conv_dlg1_Vk_micro1} at the final time $H\dt$  as
\begin{equation}
    \N{\frac{1}{N} \sum_{i=1}^N \Xihat{{H\dt}} - x^*}_{\infty,D}
    \le \N{\frac{1}{N} \sum_{i=1}^N \left(\Xihat{{H\dt}} - \Xibar{{H\dt}}\right)}_{\infty,D}
    + \N{\frac{1}{N} \sum_{i=1}^N \Xibar{{H\dt}} - x^*}_{\infty,D}.
\end{equation}
Hence, the failure probability associated to \eqref{eq:conv_dlg1_Vk_micro1} can be bounded as
\begin{equation}
\begin{split}
\label{eq:prob_split_main}
    \bbP\left(\N{\frac{1}{N} \sum_{i=1}^N \Xihat{{H\dt}} - x^*}_{\infty,D} > \acc\right)
    &\leq\bbP\left(\N{\frac{1}{N} \sum_{i=1}^N \left(\Xihat{{H\dt}} - \Xibar{{H\dt}}\right)}_{\infty,D} > \frac{\acc}{2} \right) \\
    &\quad\,+ \bbP\left(\N{\frac{1}{N} \sum_{i=1}^N \Xibar{{H\dt}} - x^*}_{\infty,D} > \frac{\acc}{2}\right).
\end{split}
\end{equation}
We now estimate the two terms on the right-hand side of \eqref{eq:prob_split_main} individually.

For the second term in \eqref{eq:prob_split_main}, which concerns only the continuous mean-field trajectories, since the $\ell^{\infty}$-norm on $\RD$ can be written as the maximum over the $\ell^{\infty}$-norms on $\Rdl$ of the lower-dimensional components, we have for the complement that
\begin{equation}
\label{eq:intersection}
\begin{split}
    &\bbP\left(\N{\frac{1}{N} \sum_{i=1}^N \Xibar{{H\dt}} - \globmin}_{\infty,D} \le  \frac{\acc}{2}\right) \\
    &\quad\,= \bbP\left(\max_{\ell \in \{ 1, \ldots, L \}} \N{\CB_\ell^\top \left(\frac{1}{N} \sum_{i=1}^N \Xibar{{H\dt}} - \globmin\right)}_{\infty, d_\ell} \le \frac{\acc}{2}\right)\\
    &\quad\,= \bbP\left(\N{\frac{1}{N} \sum_{i=1}^N \CB_\ell^\top(\Xibar{{H\dt}} - \globmin)}_{\infty, d_\ell} \le \frac{\acc}{2} \text{ for all $\ell \in \{1,\ldots,L\}$}\right)
\end{split}
\end{equation}
As discussed in the proof of Theorem~\ref{thm:separation_anisotropicCBO_dynamics}, $\{\CB_\ell^\top \, \Xibar{0}\}_{\ell \in \{1,\ldots,L\}}$ are independent by assumption, and the independence is preserved by the dynamics since the projected processes $\{\CB_\ell^\top \,\Xibar{{t}}\}_{\ell \in \{1,\ldots,L\}}$ evolve according to decoupled equations driven by the independent Brownian motions $\{\CB_\ell^\top B_t\}_{\ell \in \{1,\ldots,L\}}$ for all $t$. 
Leveraging this, we may estimate
\begin{equation}
    \label{eq:intersection_aux_}
\begin{split}
    &\bbP\left(\N{\frac{1}{N} \sum_{i=1}^N \CB_\ell^\top(\Xibar{{H\dt}} - \globmin)}_{\infty, d_\ell} \le \frac{\acc}{2} \text{ for all $\ell \in \{1,\ldots,L\}$}\right)\\
    &\quad\, =\prod_{\ell=1}^L \bbP\left(\N{\frac{1}{N} \sum_{i=1}^N \CB_\ell^\top(\Xibar{{H\dt}} - \globmin)}_{\infty, d_\ell} \leq \frac{\acc}{2}\right) \\
    &\quad\,= \prod_{\ell=1}^L \left( 1- \bbP\left(\N{\frac{1}{N} \sum_{i=1}^N \CB_\ell^\top(\Xibar{{H\dt}} - \globmin)}_{\infty, d_\ell} > \frac{\acc}{2}\right) \right)\\
    &\quad\, \geq \prod_{\ell=1}^L \left( 1-\bbP\left(\N{\frac{1}{N} \sum_{i=1}^N \CB_\ell^\top(\Xibar{{H\dt}} - \globmin)}_{2, d_\ell} > \frac{\acc}{2}\right)\right)\\
    &\quad\, \geq \prod_{\ell=1}^L \left( 1- \frac{4}{\acc^2} \, \bbE \N{\frac{1}{N} \sum_{i=1}^N \CB_\ell^\top(\Xibar{{H\dt}} - \globmin)}_{2, d_\ell}^2 \right)\\
    &\quad\, \geq \prod_{\ell=1}^L \left( 1-\frac{4}{\acc^2} \frac{1}{N} \sum_{i=1}^N  \, \bbE \N{\CB_\ell^\top(\Xibar{{H\dt}} - \globmin)}_{2, d_\ell}^2 \right),
\end{split}
\end{equation}
where the first inequality follows from the relation between the $\ell^\infty$- and $\ell^2$-norms, the second from Markov's inequality, and the last one follows from Jensen's inequality.
Recalling the definition of the functional $\CV_k$ in~\eqref{def:Vk}, it holds $\mathbb{E} \Nbig{\CB_\ell^\top(\Xibar{{H\dt}}-\globmin)}_{2,d_\ell}^2 = 2 \sum_{k\in\CI_\ell} \CV_k(\rho_{H\dt})$.
Using this together with \eqref{eq:intersection_aux_}, we can conclude \eqref{eq:intersection} by
\begin{equation}
\label{eq:secondterm_bound}
\begin{split}
    \bbP\left(\N{\frac{1}{N} \sum_{i=1}^N \Xibar{{H\dt}} - \globmin}_{\infty,D} \le  \frac{\acc}{2}\right)
    &\geq \prod_{\ell=1}^L \left( 1- \frac{4}{\acc^2} \frac{1}{N} \sum_{i=1}^N  \, \bbE \N{\CB_\ell^\top(\Xibar{{H\dt}} - \globmin)}_{2, d_\ell}^2 \right)  \\
    &= \prod_{\ell=1}^L \left( 1-\frac{8}{\acc^2} \sum_{k\in \CI_\ell} \CV_k(\rho_{H\dt}) \right) \\
    &\geq \prod_{\ell=1}^L \left( 1-\frac{8}{\acc^2} d_\ell \CV_\infty(\rho_{H\dt}) \right) \geq \left( 1-\frac{8}{\acc^2} \intrd \CV_\infty(\rho_{H\dt}) \right)^L\\
    & \geq  \left( 1-  \frac{8}{\acc^2}\intrd \,\CV_\infty(\rho_0) \exp \left(-\left(1-\vartheta \right)\left(2\lambda-\sigma^2\right)H \dt \right) \right)^L
\end{split}
\end{equation}
where we exploited the definition of $\CV_\infty$ from \eqref{def:Vinfty} and used that $d_\ell \leq \intrd$.
In the last step, we further applied \eqref{eq:mainproof_step1}.

For the first term in \eqref{eq:prob_split_main}, which concerns the time discretization and finite-particle approximation,
we bound, switching from the $\ell^\infty$-norm to the $\ell^2$-norm via the trivial bound $\N{\dummy}_{\infty,D} \le \N{\dummy}_{2,D}$, with Markov's and Jensen's inequality
\begin{equation}
\label{eq:markov_poc_bound}
\begin{split}
    \bbP\left(\N{\frac{1}{N} \sum_{i=1}^N \left(\Xihat{{H\dt}} - \Xibar{{H\dt}}\right)}_{\infty,D} > \frac{\acc}{2}\right) 
    &\le \bbP\left(\N{\frac{1}{N} \sum_{i=1}^N \left(\Xihat{{H\dt}} - \Xibar{{H\dt}}\right)}_{2,D} > \frac{\acc}{2}\right) \\
    &\le \frac{4}{\acc^2} \bbE \N{\frac{1}{N} \sum_{i=1}^N \left(\Xihat{{H\dt}} - \Xibar{{H\dt}}\right)}_{2,D}^2 \\
    &\le \frac{4}{\acc^2}  \frac{1}{N} \sum_{i=1}^N \bbE \N{\Xihat{{H\dt}} - \Xibar{{H\dt}}}_{2,D}^2 \\
    &\le \frac{4}{\acc^2}  \left( \cna \dt + \cmfa N^{-1}\right),
\end{split}
\end{equation}
where the last inequality follows from the strong mean-square convergence estimate in \cite[Theorem~2.4]{bonandin2025strong}, 
with constants $\cna$ and $\cmfa$ depending on the constants appearing in Assumption~\ref{ass:well-posedness_E} on $\CE$ as well as $(\alpha,\lambda,\sigma,H\dt,\rho_0)$. Specifically, $\cna$ and $\cmfa$ depend exponentially on the parameters $\alpha,\lambda$ and $\sigma$ and on the time horizon $H \dt$, and $\cna$ further depends linearly on the ambient dimension $D$.
Combining the estimates \eqref{eq:secondterm_bound} and \eqref{eq:markov_poc_bound} and inserting the bounds into \eqref{eq:prob_split_main} shows

\begin{equation*}
\begin{split}
    &\bbP\left(\N{\frac{1}{N} \sum_{i=1}^N \Xihat{{H\dt}} - x^*}_{\infty,D} \le \acc\right)  = 1 - \bbP\left(\N{\frac{1}{N} \sum_{i=1}^N \Xihat{{H\dt}} - x^*}_{\infty,D} > \acc\right)\\
    &\geq 1 - \left(\frac{4}{\acc^2}  \left( \cna \dt + \cmfa N^{-1}\right) + 1 - \left( 1-  \frac{8}{\acc^2}\intrd \,\CV_\infty(\rho_0) \exp \left(-\left(1-\vartheta \right)\left(2\lambda-\sigma^2\right)H \dt \right) \right)^L\right)\\
    &\geq \left( 1-  \frac{8}{\acc^2}\intrd \,\CV_\infty(\rho_0) \exp \left(-\left(1-\vartheta \right)\left(2\lambda-\sigma^2\right)H \dt \right) \right)^L - \frac{4}{\acc^2}  \left( \cna \dt + \cmfa N^{-1}\right),
    \end{split}
\end{equation*}
which concludes the proof.
\end{proof}

The following corollary can be derived immediately from Theorem~\ref{thm:conv_dlg1_Vk_micro}
by bounding \eqref{eq:conv_dlg1_Vk_micro2} from below with $(1-\delta_1)^L-\delta_2$ through suitable choices of $N$, $\Delta t$, and $H$.

\begin{corollary}[Computational complexity]
\label{thm:conv_dlg1_Vk_micro_epstot}
     
     Fix $\delta_1 \in (0,1-(1/2)^{1/L})$ and $\delta_2 \in (0,1/2)$.
     Under the assumptions of Theorem~\ref{thm:conv_dlg1_Vk_micro}, estimate~\eqref{eq:conv_dlg1_Vk_micro1}
    can be achieved with probability at least $(1-\delta_1)^L-\delta_2$ by
    \begin{enumerate}[label=(\roman*)]
        \item choosing $\alpha_0 = \max_{\ell \in \{1,\ldots,L\}} \alphazerol$, for $\alphazerol>0$ 
        as specified in \eqref{def:alpha0l}, with $\varepsilon$ as given in 
        \begin{equation}
        \label{eq:def_varepsilon}
            \varepsilon = \CV_{\infty}(\rho_0) \left( \frac{\acc^2 \, \delta_1}{{8
            \intrd \CV_{\infty}(\rho_0)}}\right)^{\frac{1+\vartheta/2}{1-\vartheta}},
        \end{equation}
        \item fixing a time horizon
        \begin{equation}
        \label{eq:defT}
            T\geq \frac{1}{(1-\vartheta)(2\lambda - \sigma^2)} \log \left(\frac{8 \intrd \CV_{\infty}(\rho_0)}{\acc^2 \, \delta_1} \right) +1,
        \end{equation}
        and selecting
        \begin{equation}
        \label{eq:choicesNdtK}
            N \ge \frac{8\cmfa}{\acc^2 \, \delta_2},
            \quad \dt \le \frac{\acc^2 \, \delta_2}{8\cna}, \quad \text{and} \quad H = \left\lceil \frac{T}{\dt}\right\rceil.
        \end{equation}
    \end{enumerate}
\end{corollary}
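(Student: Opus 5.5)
The plan is to plug the parameter choices into the probability lower bound \eqref{eq:conv_dlg1_Vk_micro2} of Theorem~\ref{thm:conv_dlg1_Vk_micro} and show that each of its two pieces is controlled by $\delta_1$ and $\delta_2$, respectively. Since the bound reads $(1-a)^L - b$, with
\[
a = \frac{8\intrd}{\acc^2}\CV_\infty(\rho_0)\exp\bigl(-(1-\vartheta)(2\lambda-\sigma^2)H\dt\bigr)
\quad\text{and}\quad
b = \frac{4}{\acc^2}(\cna\dt+\cmfa N^{-1}),
\]
it suffices to show $a\le\delta_1$ and $b\le\delta_2$. Indeed, $x\mapsto(1-x)^L$ is decreasing on $[0,1]$, so these two inequalities give a success probability of at least $(1-\delta_1)^L-\delta_2$. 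The restrictions $\delta_1<1-(1/2)^{1/L}$ and $\delta_2<1/2$ only guarantee that this lower bound is positive, so the statement is non-vacuous.

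\textbf{The term $b$.} By \eqref{eq:choicesNdtK}, $\cna\dt\le\acc^2\delta_2/8$ and $\cmfa N^{-1}\le\acc^2\delta_2/8$. Hence
\[
b\le\frac{4}{\acc^2}\cdot\frac{\acc^2\delta_2}{4}=\delta_2 .
\]
One caveat is that $\cna$ and $\cmfa$ depend on the horizon $H\dt$. I would handle this by noting that $H\dt=\lceil T/\dt\rceil\dt\in[T,T+\dt]\subset[T,T+1]$ because $\dt\le1$. The constants can therefore be taken as those associated with the horizon $T+1$, using that they are monotone in the horizon. This makes the choices of $N$ and $\dt$ consistent, i.e.\ not circular.

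\textbf{The term $a$.} Since $H\dt\ge T$ and $T$ satisfies \eqref{eq:defT}, we have
\[
(1-\vartheta)(2\lambda-\sigma^2)H\dt\ge\log\!\left(\frac{8\intrd\CV_\infty(\rho_0)}{\acc^2\delta_1}\right),
\]
and hence $a\le\delta_1$ directly. The additional $+1$ in \eqref{eq:defT} is harmless slack.

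\textbf{The hard step: $\alpha_0$ and $\varepsilon$.} The main point to check is that the choice of $\alpha_0$ is the one required by Theorem~\ref{thm:conv_dlg1_Vk_micro}. In that proof, $\alpha_0$ is obtained from Theorem~\ref{thm:conv_dlg1_Vk} with accuracy $\varepsilon$ given by \eqref{eq:def_eps}, namely $\varepsilon=\CV_\infty(\rho_0)(\widetilde\varepsilon/\CV_\infty(\rho_0))^{1/\xi}$ with $\widetilde\varepsilon=\CV_\infty(\rho_0)e^{-(1-\vartheta)(2\lambda-\sigma^2)H\dt}$ and $\xi=(1-\vartheta)/(1+\vartheta/2)$. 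Then:
\begin{itemize}
\item By the previous step, $\widetilde\varepsilon\le\acc^2\delta_1/(8\intrd)$. Therefore the value of $\varepsilon$ given by \eqref{eq:def_eps} is at most the $\varepsilon$ prescribed in \eqref{eq:def_varepsilon}.
\item If $H\dt$ were chosen so that $\widetilde\varepsilon$ equals this bound exactly, the two values of $\varepsilon$ would coincide.
\item Otherwise, I would argue that the convergence conclusion \eqref{eq:mainproof_step1}, namely exponential decay of $\CV_\infty$ up to the time at which accuracy $\varepsilon$ is reached, persists for the horizon actually used.
\end{itemize}
The safest route for the last point is to rerun the argument of Theorem~\ref{thm:conv_dlg1_Vk_micro} with $\widetilde\varepsilon$ and $\varepsilon$ defined through the target accuracy \eqref{eq:def_varepsilon}. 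This uses that $\alphazerol$ depends on $\varepsilon$ only through $\varepsilon^{-1}$ and the corresponding ball radius, so the fixed target $\varepsilon$ yields an $\alpha_0$ independent of the specific grid value $H\dt$. Once \eqref{eq:mainproof_step1} is secured, combining it with the bounds $a\le\delta_1$ and $b\le\delta_2$ concludes the proof.
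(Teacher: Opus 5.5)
Your bounds $b\le\delta_2$ and $a\le\delta_1$ (using $H\dt\ge T$), combined with the monotonicity of $x\mapsto(1-x)^L$, are exactly the paper's argument. The paper only says that the corollary follows immediately by bounding \eqref{eq:conv_dlg1_Vk_micro2} from below through the choices of $N$, $\dt$ and $H$. Your remark that $\cna,\cmfa$ should be taken at horizon $T+1$, to avoid circularity, is a sensible detail the paper leaves implicit.

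The issue you raise about $\alpha_0$ is real, and the paper does not address it. However, your proposed repair does not close it.

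\emph{Why this $\varepsilon$ is too large.} With $\varepsilon$ fixed by \eqref{eq:def_varepsilon}, one computes $\frac{1-\vartheta}{1+\vartheta/2}T^*=\frac{1}{(1-\vartheta)(2\lambda-\sigma^2)}\log\frac{8\intrd\CV_\infty(\rho_0)}{\acc^2\delta_1}\le T-1<H\dt$. So this $\varepsilon$ corresponds to a horizon strictly shorter than the one the algorithm actually runs.

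\emph{Why the rerun fails.} Theorem~\ref{thm:conv_dlg1_Vk} gives the decay estimate \eqref{eq:upperbound_functional} only on $[0,T_\alpha]$, and all you know is $T_\alpha\ge\frac{1-\vartheta}{1+\vartheta/2}T^*$.
\begin{itemize}
\item $H\dt$ may lie beyond $T_\alpha$, where nothing in the theorem controls $\CV_\infty(\rho_t)$.
\item For large $T$, $H\dt$ may even lie beyond $T^*$. There the choice of $\alphazerol$, which only compensates the mass loss $e^{-\ple t}$ from Proposition~\ref{prop:lower_bound} up to $t=T^*$, no longer controls the consensus point.
\end{itemize}
Rerunning the argument with this fixed $\varepsilon$ therefore yields a bound at time $\frac{1-\vartheta}{1+\vartheta/2}T^*$, not \eqref{eq:mainproof_step1} at $H\dt$. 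The independence of $\alpha_0$ from $H\dt$ that you advertise is precisely what breaks the step. Since $\alphazerol$ increases as $\varepsilon$ decreases, the $\alpha_0$ obtained from \eqref{eq:def_varepsilon} is smaller than what Theorem~\ref{thm:conv_dlg1_Vk_micro} requires for the actual horizon.

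\emph{How to fix it.} A clean fix is to tie $\varepsilon$ to the real horizon, e.g.\ $\varepsilon=\CV_\infty(\rho_0)\exp(-(1+\vartheta/2)(2\lambda-\sigma^2)(T+1))$. Then $\frac{1-\vartheta}{1+\vartheta/2}T^*=T+1\ge H\dt$, so \eqref{eq:upperbound_functional} applies at $t=H\dt$. With $T$ at its lower bound, this changes \eqref{eq:def_varepsilon} only by the constant factor $e^{-2(1+\vartheta/2)(2\lambda-\sigma^2)}$. Alternatively, a continuation argument (reusing the consensus-point bound from the third case of the proof) shows that $\CV_\infty(\rho_t)\le\varepsilon$ persists on $[T_\alpha,T^*]$. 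This suffices provided you additionally impose $H\dt\le T^*$.
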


\begin{remark}[Computational complexity, discussion]
    Corollary~\ref{thm:conv_dlg1_Vk_micro_epstot} addresses the computational complexity of the anisotropic CBO algorithm~\eqref{eq:aCBO_micro_EM}.
In particular, $\CO(HN)$ evaluations of the objective function $\CE$ are required, with the number of time steps $H$ and the number of particles $N$ as specified in \eqref{eq:choicesNdtK}. 
A closer look at the definition of $H$ and the time step size $\Delta t$ in \eqref{eq:choicesNdtK} reveals that the ambient dimension $D$ enters the bound for the number of time steps $H$ only linearly and not exponentially. This comes through the constant $\cna$ appearing in the nominator of the estimate on $\Delta t$, which depends linearly on $D$, as we mention in Theorem~\ref{thm:conv_dlg1_Vk_micro}.
Furthermore, the number of required particles $N$ depends exponentially on the intrinsic dimensionality $\intrd$ but not on the ambient dimension $D$ (see also Remark~\ref{rem:crucial_remark}).
To see this, recall from Theorem~\ref{thm:conv_dlg1_Vk_micro} that the dependency of the constant $\cmfa$ on $\alphazerol$, and thus $\alpha_0$, is exponential~\cite[Theorem~3.8]{fornasier2024consensus} (or even super-exponential), with $\alphazerol$ scaling according to the intrinsic dimension $\intrd$ (while remaining independent of $D$).
To obtain the latter, we substitute the choice~\eqref{eq:def_varepsilon} of $\varepsilon$ into the expression~\eqref{def:alpha0l} of $\alphazerol$. 
\end{remark}

The structure of the first term $(1-\delta_1)^L$ in the success probability in Corollary~\ref{thm:conv_dlg1_Vk_micro_epstot} is a natural consequence of how the anisotropic dynamics interacts with the choice of the $\ell^{\infty}$-norm. Because the objective function is additively separable, the continuous mean-field trajectories completely decouple into $L$ statistically independent components. 
Crucially, since the convergence being evaluated in the $\ell^{\infty}$-norm requires every single independent coordinate block to successfully concentrate around its respective component of the global minimizer simultaneously for the system to succeed globally,
the product form $(1-\delta_1)^L$ is simply the mathematical manifestation of this joint event.
This structure underlines the efficiency of the anisotropic framework in high-dimensional settings in case of a separable structure. By breaking the $D$-dimensional optimization problem into $L$ independent, low-dimensional sub-problems, the algorithm trades an exponential dependence of the particle number on the ambient dimension $D$ for a mild dependence of the success probability in the number of blocks $L$.

%%%%%%%%%%%%%%%%%%%%%%%%%%%%%%%%%%%%%%%%%%%%%%%%%%
%%%%%%%%%% Section %%%%%%%%%%%%%%%%%%%%%%%%%%%%%%%
%%%%%%%%%%%%%%%%%%%%%%%%%%%%%%%%%%%%%%%%%%%%%%%%%%
\section{Proof details for the main results of Sections~\ref{sec:mainresults_sep} and \ref{sec:mainresults_convmf}}
\label{sec:proofmainresults}

In this section, we present detailed proofs of the results stated in Sections \ref{sec:mainresults_sep} and \ref{sec:mainresults_convmf}. 
Specifically, Section \ref{sec:mainresults_sep_proof} contains the proof of the separability result in Theorem \ref{thm:separation_anisotropicCBO_dynamics}, which is based on Lemma \ref{lem:sepmeasure_implies_sepcons}.
In Section \ref{sec:mainresults_convmf_proof}, we prove Theorem \ref{thm:conv_dlg1_Vk}, which is about the convergence of the anisotropic CBO algorithm for a cost functional $\CE$ that is additively separable as in Assumption \ref{ass:add_sep}.

%%%%%%%%%%%%%%%%%%%%%%%%%%%%%%%%%%%%%%%%%%%%%%%%%%%%%%%%
%%%%%%%%%%%%%%%%%%%%%%%%%%%%%%%%%%%%%%%%%%%%%%%%%%%%%%%%

\subsection{Proof of Theorem~\ref{thm:separation_anisotropicCBO_dynamics}}
\label{sec:mainresults_sep_proof}

Let us start by proving an auxiliary lemma that is crucial to the proof of Theorem~\ref{thm:separation_anisotropicCBO_dynamics}.

\begin{lemma}
\label{lem:sepmeasure_implies_sepcons}
    Let $\CE:\RD\rightarrow\R$ be additively separable as in Assumption \ref{ass:add_sep}.
    Let $\indivmeasure^{\ell} \in \mathcal{P}(\Rdl)$ for all $\ell \in \{ 1, \ldots, L \}$ and let $\indivmeasure \in \mathcal{P}(\RD)$ be such that $\indivmeasure = \indivmeasure^1 \otimes \cdots \otimes \indivmeasure^L$.
    Then, 
    \begin{equation}
     \label{eq:consensus_points_relation_true}
        \conspoint{\indivmeasure}
    		= \sum_{\ell=1}^L \CB_\ell\conspointcoordinate{\indivmeasure^\ell}.
     \end{equation}
\end{lemma}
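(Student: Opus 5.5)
The identity will follow directly from the definition \eqref{eq:conspoint} of the consensus point, the product structure of $\indivmeasure$, and the factorization of the weight. By Assumption~\ref{ass:add_sep},
\begin{equation*}
\omegaa(x) = \exp\Bigl(-\alpha\sum_{\ell=1}^L \CE_\ell(\CB_\ell^\top x)\Bigr) = \prod_{\ell=1}^L \omegaa^\ell(\CB_\ell^\top x),
\end{equation*}
with $\omegaa^\ell$ as in \eqref{def:xalphadl}. We then identify $\RD$ with $\R^{d_1}\times\cdots\times\R^{d_L}$ via $x \mapsto (\CB_1^\top x,\dots,\CB_L^\top x)$. Under this identification $\indivmeasure = \indivmeasure^1\otimes\cdots\otimes\indivmeasure^L$ becomes a genuine product measure, so Fubini--Tonelli applies. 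This gives the normalization first:
\begin{equation*}
\N{\omegaa}_{L^1(\indivmeasure)} = \prod_{\ell=1}^L \N{\omegaa^\ell}_{L^1(\indivmeasure^\ell)},
\end{equation*}
and each factor lies in $(0,1]$-type finite positive range since $\omegaa^\ell$ is positive. Finiteness of the integrals of $\omegaa^\ell$ requires $\CE_\ell$ to be bounded below. I would note this via Lemma~\ref{lem:rel_Eunder_Elunder}, or simply assume it implicitly, as the definition of $x_\alpha$ already does.

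For the numerator, I would decompose $x = \sum_{\ell} \CB_\ell \CB_\ell^\top x$, using that the $P_\ell$ form a resolution of the identity because the $\CI_\ell$ partition $\{1,\dots,D\}$. It then suffices to compute $\CB_{m}^\top \conspoint{\indivmeasure}$ for each fixed block $m$:
\begin{equation*}
\CB_m^\top \conspoint{\indivmeasure} = \frac{\int \CB_m^\top x \prod_{\ell}\omegaa^\ell(\CB_\ell^\top x)\,d\indivmeasure(x)}{\prod_\ell \N{\omegaa^\ell}_{L^1(\indivmeasure^\ell)}}.
\end{equation*}
By Fubini, the numerator factors as
\begin{equation*}
\Bigl(\int_{\R^{d_m}} v\,\omegaa^m(v)\,d\indivmeasure^m(v)\Bigr)\prod_{\ell\neq m}\N{\omegaa^\ell}_{L^1(\indivmeasure^\ell)}.
\end{equation*}
Cancelling the common factors yields $\CB_m^\top\conspoint{\indivmeasure} = \conspointcoordinate[m]{\indivmeasure^m}$, i.e., the $m$-th block consensus point. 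Applying $\sum_m \CB_m(\cdot)$ then gives \eqref{eq:consensus_points_relation_true}.

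The only delicate point is justifying the Fubini step for the vector-valued integrand $v\,\omegaa^m(v)$. This needs $\int \N{v}\,\omegaa^m(v)\,d\indivmeasure^m(v)<\infty$, which holds because $\omegaa^m \le e^{-\alpha\CEunder_m}$ is bounded and $\indivmeasure^m$ has a finite first moment. I would therefore state that the consensus points are assumed to be well defined, e.g. $\indivmeasure\in\CP_1(\RD)$, which holds in every application in the paper since the measures there lie in $\CP_4$. Finite first moments of $\indivmeasure$ and of its marginals $\indivmeasure^\ell$ are equivalent, so this assumption transfers to each block. Everything else is bookkeeping.
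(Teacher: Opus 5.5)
Your proposal is correct and matches the paper's proof: factor the weight $\omegaa$ using additive separability, apply Fubini to the product measure $\indivmeasure^1\otimes\cdots\otimes\indivmeasure^L$ to split numerator and normalization block by block, cancel the common factors, and reassemble via $\sum_\ell \CB_\ell\CB_\ell^\top = I_D$. Your integrability remarks go slightly beyond the paper, which leaves the Fubini step unjustified. These are the lower bound on each $\CE_\ell$ from Lemma~\ref{lem:rel_Eunder_Elunder} and a finite first moment of $\indivmeasure$.
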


\begin{proof}%[Proof of Lemma \ref{lem:sepmeasure_implies_sepcons}]
    Recalling the definition of the orthoprojector $P_\ell = \CB_\ell \CB_\ell^\top$, we observe that we can write
	\begin{equation*} \label{eq:consensus_points_relation}
	\begin{split}
		\conspoint{\indivmeasure}
		&= \sum_{\ell=1}^L P_\ell \conspoint{\indivmeasure}
		 = \sum_{\ell=1}^L \CB_\ell \CB_\ell^\top \conspoint{\indivmeasure} =\sum_{\ell=1}^L \CB_\ell\conspointcoordinate{\indivmeasure^\ell},
	\end{split}
	\end{equation*}
	where the last equality follows from the fact that 
	\begin{equation}
	\begin{split}
		\CB_\ell^\top \conspoint{\indivmeasure}
		&= \int \left(\CB_\ell^\top v \frac{\exp(-\alpha\CE(v))}{\int \exp(-\alpha\CE(\tilde v))\,d\indivmeasure(\tilde{v})}\right) d\indivmeasure(v) \\
		&=  \frac{\int \CB_\ell^\top v\prod_{\tilde\ell=1}^L\exp\big(-\alpha\CE_{\tilde\ell}(\CB_{\tilde\ell}^\top v)\big)\,d\indivmeasure(v)}{\int \prod_{\tilde\ell=1}^L\exp\big(-\alpha\CE_{\tilde\ell}(\CB_{\tilde\ell}^\top\tilde v)\big)\, d\indivmeasure(\tilde v)} \\
		&= \frac{\int \!\cdots\! \int z_\ell\prod_{\tilde\ell=1}^L\exp(-\alpha\CE_{\tilde\ell}(z_{\tilde\ell}))\,d\indivmeasure^1(z_1)\cdots d\indivmeasure^L(z_L)}{\int \!\cdots\! \int \prod_{\tilde\ell=1}^L\exp(-\alpha\CE_{\tilde\ell}(\tilde z_{\tilde\ell}))\, d\indivmeasure^1(\tilde z_1)\cdots d\indivmeasure^L(\tilde z_L)} \\
		&= \int z_\ell \frac{\exp(-\alpha\CE_{\ell}(z_\ell))}{\int\exp(-\alpha\CE_{\ell}(\tilde z_\ell))\, d\indivmeasure^{\ell}(\tilde z_\ell)} d\indivmeasure^{\ell}(z_\ell)=  \conspointcoordinate{\indivmeasure^\ell}.
	\end{split}
	\end{equation}
	Here, the second equality uses the additive separability of $\CE$ as in Assumption \ref{ass:add_sep}, and the third step follows from Fubini's theorem (see e.g.\@ \cite{billingsley1999probability}) together with the fact that $\indivmeasure = \indivmeasure^1 \otimes \cdots \otimes \indivmeasure^L$ by assumption.
\end{proof}

\begin{proof}[Proof of Theorem \ref{thm:separation_anisotropicCBO_dynamics}]
    First note that $\rho^\ell_0=\CB_{\ell}^\top\#\rho_0\in\CP_4(\Rdl)$ for $\ell \in \{ 1, \ldots, L \}$ since $\rho_0\in\CP_4(\RD)$. Consequently, in view of the discussion in Remark \ref{rem:implications_wp} and Lemma \ref{lem:assEl_onE_wellp},
    both the full dynamics~\eqref{eq:aCBO_McKean} and the $L$ component dynamics~\eqref{eq:aCBO_McKean_sepl} are well-posed, showing that $\rho\in\CC([0,T],\CP_4(\RD))$ and $\rho^\ell\in\CC([0,T],\CP_4(\Rdl))$ for all $\ell \in \{ 1, \ldots, L \}$.

    Let us now introduce the short-hand notation $Z_t \coloneqq \sum_{\ell=1}^L \CB_\ell\Xbar{t}^\ell \in \RD$ and  $Z =(Z_t)_{t\in[0,T]}$.
    Comparing the full dynamics $\overbar{X}_t$ with the dynamics $Z_t$ constructed from the component dynamics~$\overbar{X}^{\ell}_t$, we obtain
	\begin{equation} \label{eq:proof:separation_anisotropicCBO_dynamics_1}
	\begin{split}
		\Xbar{t}-Z_t
		&= \left(\Xbar{0}-Z_0\right)
		- \lambda\int_0^t \left(\big(\Xbar{\tau} - \conspoint{\rho_\tau}\big) - \sum_{\ell=1}^L \CB_\ell \big(\Xbar{\tau}^\ell - \conspointcoordinate{\rho_\tau^\ell}\big)\right)d\tau \\
		&\quad\, + \sigma\int_0^t \sum_{k=1}^D\big(\Xbar{\tau} - \conspoint{\rho_\tau}\big)_k\,d(B_\tau)_k e_k^{D} - \int_0^t \sum_{\ell=1}^L \CB_\ell \sum_{k=1}^{d_\ell}\big(\Xbar{\tau}^\ell - \conspointcoordinate{\rho_\tau^\ell}\big)_k\,d(\CB_\ell^\top B_\tau)_k e_k^{d_\ell}\\
		&= \left(\Xbar{0}-Z_0\right)
		- \lambda\int_0^t \left(\big(\Xbar{\tau} - \conspoint{\rho_\tau}\big) - \sum_{\ell=1}^L \CB_\ell \big(\Xbar{\tau}^\ell - \conspointcoordinate{\rho_\tau^\ell}\big)\right)d\tau \\
		&\quad\, + \sigma\int_0^t \sum_{k=1}^D\big(\Xbar{\tau} - \conspoint{\rho_\tau}\big)_k\,d(B_\tau)_k e_k^{D} - \int_0^t \sum_{\ell=1}^L \sum_{k=1}^{D}\left(\CB_{\ell} \big(\Xbar{\tau}^\ell - \conspointcoordinate{\rho_\tau^\ell}\big)\!\right)_kd(B_\tau)_k e_k^{D} \\
		&= \left(\Xbar{0}-Z_0\right)
		- \lambda\int_0^t \left(\left(\Xbar{\tau} - Z_{\tau}\right) -  \left(\conspoint{\rho_\tau} - \sum_{\ell=1}^L \CB_{\ell}\conspointcoordinate{\rho_\tau^\ell}\right)\right)d\tau \\
		&\quad\, + \sigma\int_0^t \sum_{k=1}^D\left(\left(\Xbar{\tau}-Z_{\tau}\right)_{k} - \left(\conspoint{\rho_\tau}-\sum_{\ell=1}^L \CB_{\ell}\conspointcoordinate{\rho_\tau^\ell}\right)_{\!\!k\,} \right)d(B_\tau)_k e_k^{D}.
	\end{split}
	\end{equation}
    Denoting by $\rho_t^{\otimes} \coloneqq \rho_t^1\otimes \cdots\otimes \rho_t^L$ the product measure,
    we notice that $\textrm{Law}(Z_t)= \rho_t^\otimes$ as a consequence of the statistical independence of $\rho^1_t,\ldots,\rho^L_t$ induced by the statistical independence of $\rho_0^1,\ldots,\rho_0^L$ and the statistical independence of $\CB_{1}^\top B_t,\ldots,\CB_L^\top B_t$ by the disjointness of the sets $\CI_1,\ldots,\CI_L$.
	Then, by means of Lemma \ref{lem:sepmeasure_implies_sepcons},
    we can write $\conspoint{\rho_t^\otimes} = \sum_{\ell=1}^L \CB_\ell\conspointcoordinate{\rho_t^\ell}$
    and conclude \eqref{eq:proof:separation_anisotropicCBO_dynamics_1} as 
	\begin{equation}
        \label{eq:proof:separation_anisotropicCBO_dynamics_2}
	\begin{split}
		\Xbar{t}-Z_{t}
		&= \left(\Xbar{0}-Z_{0}\right)
		- \lambda\int_0^t \left(\left(\Xbar{\tau} - Z_{\tau} \right)-  \left(\conspoint{\rho_\tau} - \conspoint{\rho_t^\otimes}\right)\right)d\tau \\
		&\quad\, + \sigma\int_0^t \sum_{k=1}^D\left(\left(\Xbar{\tau}-Z_{\tau}\right)_{k} - \left(\conspoint{\rho_\tau}-\conspoint{\rho_t^\otimes}\right)_{k}\right)d(B_\tau)_k e_k^{D}.
	\end{split}
	\end{equation}
    Taking the squared norm, the supremum over $t \in [0,T]$ and consequently applying the expectation on both sides of~\eqref{eq:proof:separation_anisotropicCBO_dynamics_2},
    we arrive with Young's inequality at
	\begin{equation}
        \label{eq:proof:separation_anisotropicCBO_dynamics_3}
	\begin{split}
	    &\bbE\left( \sup_{t \in [0,T]} \N{\Xbar{t}-Z_{t}}_{2,D}^2 \right)\\
		&\qquad\,\leq 2\bbE\N{\Xbar{0}-Z_{0}}_{2,D}^2
		+4\lambda^2 \bbE \left( \sup_{t \in [0,T]} \N{\int_0^t \left(\left(\Xbar{\tau} - Z_{\tau} \right)-  \left(\conspoint{\rho_\tau} - \conspoint{\rho_\tau^\otimes}\right)\right)d\tau}_{2,D}^2 \right) \\
		&\qquad\, \quad\, + 4\sigma^2 \bbE \left( \sup_{t \in [0,T]} \N{\int_0^t \sum_{k=1}^D\left(\left(\Xbar{\tau}-Z_{\tau}\right)_{k} - \left(\conspoint{\rho_\tau}-\conspoint{\rho_\tau^\otimes}\right)_{k}\right)d(B_\tau)_k e_k^{D}}_{2,D}^2 
        \right)\\
		&\qquad\,\leq 2\bbE\N{\Xbar{0}-Z_{0}}_2^2
        + 4\left(\lambda^2T+c_{\text{BDG},2} \sigma^2\right) \bbE\int_0^T \N{\left(\Xbar{t} - Z_{t} \right)-  \left(\conspoint{\rho_t} - \conspoint{\rho_t^\otimes}\right)}_{2,D}^2dt \\
		&\qquad\,\leq 2\bbE\N{\Xbar{0}-Z_{0}}_{2,D}^2
		+ 8\left(\lambda^2T+c_{\text{BDG},2}\sigma^2\right) \int_0^T \bbE\N{\Xbar{t} - Z_{t}}_{2,D}^2 + \N{\conspoint{\rho_t}-\conspoint{\rho_t^\otimes}}_{2,D}^2dt. 
	\end{split}
	\end{equation}
    where the second step uses Jensen's inequality for the second summand, and the Burkholder--Davis--Gundy inequality~\cite[Theorem~7.3]{mao2007stochastic} with constant $c_{\text{BDG},2}=4$ for the third.
	Using now that $\textrm{Law}(Z_t) = \rho_t^\otimes$, the stability estimate~\cite[Lemma~3.2]{carrillo2018analytical} is applicable as a consequence of the assumptions on the individual components $\CE_\ell$, and the $\CP_4$-regularity of the laws $\rho_t$ and $\rho^\ell_t$ for all $\ell \in \{ 1, \ldots, L \}$.
    Hence, we have that
	\begin{equation}
		\N{\conspoint{\rho_t} - \conspoint{\rho_t^\otimes}}_{2,D}^2 \leq c W_2^2(\rho_t, \rho_t^\otimes) \leq c\bbE\N{\Xbar{t}-Z_t}_{2,D}^2.
	\end{equation}
	Inserting this into \eqref{eq:proof:separation_anisotropicCBO_dynamics_3}, we eventually arrive at the bound
	\begin{equation}
	\begin{split}
        &\bbE \left( \sup_{t \in [0,T]} \N{\Xbar{t}-Z_{t}}_{2,D}^2 \right)
        \leq 2\bbE\N{\Xbar{0}-Z_{0}}_{2,D}^2
		+ 8\left(\lambda^2T+c_{\text{BDG},2}\sigma^2\right)(1+c) \int_0^T \bbE \left( \sup_{t \in [0,T]}\N{\Xbar{t} - Z_{t}}_{2,D}^2dt \right),
	\end{split}
	\end{equation}
	which allows for an application of Gr\"onwall's inequality yielding
	\begin{equation}
    \label{eq:2normzero2}
		\bbE \left( \sup_{t \in [0,T]}\N{\Xbar{t}-Z_t}_{2,D}^2 \right)
		= 2\bbE\N{\Xbar{0}-Z_0}_{2,D}^2 \exp\left(8\left(\lambda^2T+c_{\text{BDG},2}\sigma^2\right)(1+c)t\right)
		\equiv0
	\end{equation}
	since $\bbE\N{\Xbar{0}-Z_0}_{2,D}^2 = 0$ as ensured by the assumption on the initial measures.
    This proves the first claim.

    The equality between the path-wise laws $\rho_{[0,T]}$ and $\rho^{\otimes}_{[0,T]} = \rho^1_{[0,T]} \otimes \cdots \otimes \rho^L_{[0,T]}$ is a direct consequence of the observation that with \eqref{eq:2normzero2} it holds
    \begin{equation}
        \label{eq:2normzero2_measures}
        \CW_2^2(\rho_{[0,T]}, \rho^{\otimes}_{[0,T]}) \le \bbE \left( \sup_{t \in [0,T]}\N{\Xbar{t}-Z_t}_{2,D}^2 \right)  \equiv 0,
    \end{equation}
    which concludes the proof.
    Here $\CW_2$ denotes the path-wise Wasserstein-$2$ distance, associated with the continuous path space endowed with the uniform topology, see \cite{chaintron2022propagation} for more details.
\end{proof}

\begin{remark}
We note that the converse of Lemma \ref{lem:sepmeasure_implies_sepcons} does not hold for general measures $\indivmeasure, \indivmeasure^\ell$ (namely, it is not true, in general, that the separation of the consensus points implies the factorization of the measure $\indivmeasure$ as the product of the $\indivmeasure^\ell$'s).
However, it is valid under the assumptions of Theorem \ref{thm:separation_anisotropicCBO_dynamics}, provided that we recognize the measures $\indivmeasure, \indivmeasure^\ell$ as distributions of the stochastic processes  $\overbar{X}$ and $Z$ respectively.
The proof follows directly from the proof of the separation result presented above, by using Equation \eqref{eq:proof:separation_anisotropicCBO_dynamics_2} along with the observation of inequality \eqref{eq:2normzero2_measures}.
\end{remark}

%%%%%%%%%%%%%%%%%%%%%%%%%%%%%%%%%%%%%%%%%%%%%%%%%%%%%%%%
%%%%%%%%%%%%%%%%%%%%%%%%%%%%%%%%%%%%%%%%%%%%%%%%%%%%%%%%
\subsection{Proof details for Theorem \ref{thm:conv_dlg1_Vk}}
\label{sec:mainresults_convmf_proof}

Our proof of the convergence result in Theorem~\ref{thm:conv_dlg1_Vk} follows the framework developed in \cite{fornasier2024consensus,fornasier2021convergence,riedl2024perspective}.
For general objective functions $\CE$, we first study in Subsection \ref{subsec:Vk_evolution} the evolution of the coordinate-wise functionals $\CV_k$ defined in \eqref{def:Vk} and infer therefrom in Subsection \ref{subsec:Vinf_evolution} the evolution of the functional $\CV_\infty$ defined in \eqref{def:Vinfty}.
Then, in Subsection \ref{subsec:QLPl}, we establish a quantitative Laplace principle under the assumption that $\CE$ satisfies the additive separability property in Assumption \ref{ass:add_sep}.
Eventually,
we prove Theorem \ref{thm:conv_dlg1_Vk} in Subsection~\ref{subsec:proof_Vk_dlg1}.

%%%%%%%%%%%%%%%%%%%%%%%%%%%%%%%%%%%%%%%%%%%%%%%%%%%%%%%%%%%%%%%%

\subsubsection{Evolution of the functionals \texorpdfstring{$\Vk$,}{Vk} \texorpdfstring{$k \in \{1,\dots,D\}$}{}}
\label{subsec:Vk_evolution}

We first derive evolution inequalities for the energy (Lyapunov) functionals $\CV_k$ defined in \eqref{def:Vk} for $k \in \{1,\dots,D\}$.

Throughout this subsection, no additive separability assumption as in Assumption \ref{ass:add_sep} is required on $\CE$.

\begin{lemma} \label{lem:evolutionVk_upper}
	Let $\CE:\RD\rightarrow\R$, and fix $\alpha,\lambda,\sigma > 0$.
	Moreover, let $T>0$ and let $\rho \in \CC([0,T], \CP_4(\RD))$ be a weak solution to the Fokker-Planck equation~\eqref{eq:aCBO_mf}.
	Then, for all $k \in \{1,\dots,D\}$, the functionals $\CV_k(\rho_t)$ satisfy
	\begin{equation}
    \label{eq:evolutionVk_upper}
	    \frac{d}{dt}\CV_k(\rho_t) \leq -\left(2\lambda-\sigma^2\right) \CV_k(\rho_t) 
        + \sqrt{2}\left(\lambda+\sigma^2\right) \sqrt{\CV_k(\rho_t)} \SN{\left(\conspoint{\rho_t}-\globmin\right)_k}
        + \frac{\sigma^2}{2} \left(\conspoint{\rho_t}-\globmin\right)_k^2
	\end{equation}
    for all $t \in (0,T)$.
\end{lemma}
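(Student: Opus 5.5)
We test the weak formulation \eqref{eq:aCBO_mf_weak} with $\phi(x)=\tfrac12(x-\globmin)_k^2$. This $\phi$ lies in $\CC^2_*(\RD)$: $\partial_k\phi(x)=(x-\globmin)_k$ grows at most linearly in $|x_k|$, $\partial_{kk}^2\phi\equiv1$, and all other derivatives vanish. Since $\rho\in\CC([0,T],\CP_4(\RD))$, all integrals are finite, and we get the exact identity
\begin{equation*}
\frac{d}{dt}\CV_k(\rho_t) = -\lambda\int (x-\conspoint{\rho_t})_k(x-\globmin)_k\,d\rho_t(x) + \frac{\sigma^2}{2}\int (x-\conspoint{\rho_t})_k^2\,d\rho_t(x).
\end{equation*}

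Next we expand both integrands around $\globmin$. We write $(x-\conspoint{\rho_t})_k=(x-\globmin)_k-(\conspoint{\rho_t}-\globmin)_k$. The drift term then becomes
\begin{equation*}
-2\lambda\CV_k(\rho_t)+\lambda(\conspoint{\rho_t}-\globmin)_k\int(x-\globmin)_k\,d\rho_t.
\end{equation*}
The diffusion term becomes
\begin{equation*}
\sigma^2\CV_k(\rho_t)-\sigma^2(\conspoint{\rho_t}-\globmin)_k\int(x-\globmin)_k\,d\rho_t+\frac{\sigma^2}{2}(\conspoint{\rho_t}-\globmin)_k^2.
\end{equation*}
Summing, we obtain $-(2\lambda-\sigma^2)\CV_k$ plus the cross term $(\lambda-\sigma^2)(\conspoint{\rho_t}-\globmin)_k\int(x-\globmin)_k\,d\rho_t$ plus the last quadratic term.

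The only step needing care is the cross term. By Cauchy--Schwarz (Jensen),
\begin{equation*}
\SN{\int(x-\globmin)_k\,d\rho_t}\le\left(\int(x-\globmin)_k^2\,d\rho_t\right)^{1/2}=\sqrt{2\CV_k(\rho_t)}.
\end{equation*}
We also bound $|\lambda-\sigma^2|\le\lambda+\sigma^2$. Together these give exactly the middle term $\sqrt2(\lambda+\sigma^2)\sqrt{\CV_k}\,|(\conspoint{\rho_t}-\globmin)_k|$ and hence \eqref{eq:evolutionVk_upper}. The bound $|\lambda-\sigma^2|\le\lambda+\sigma^2$ is slightly lossy but matches the stated constant.

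The derivative is meant in the sense given by the weak formulation, which holds for all $t\in(0,T)$. No property of $\CE$ is used beyond $\conspoint{\rho_t}$ being well defined and finite. This holds since $\omegaa>0$ is continuous and $\rho_t$ has finite second moments, and we assume this implicitly as in the cited works.
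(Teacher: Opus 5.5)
Your proof is correct and follows the paper's argument: the same test function $\tfrac12(x-\globmin)_k^2$ in the weak formulation, the same expansion of both integrands around $\globmin$, and the same Jensen bound $\SN{(\bbE(\rho_t)-\globmin)_k}\le\sqrt{2\CV_k(\rho_t)}$. The only cosmetic difference is the handling of the two cross terms. You merge them into $(\lambda-\sigma^2)(\conspoint{\rho_t}-\globmin)_k\int(x-\globmin)_k\,d\rho_t(x)$ before taking absolute values, which would even give the sharper factor $\SN{\lambda-\sigma^2}$. The paper instead bounds each cross term separately and obtains $\lambda+\sigma^2$ directly.
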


\begin{proof}
    We note that the function $\phi_k(\dummy) = 1/2\left(\dummy - \globmin\right)_k^2$ is in $\CC_*^2(\RD)$ (with $\CC_*^2(\RD)$ as specified in Definition~\ref{def:fokker_planck_weak_sense}) and recall that $\rho_t$ satisfies the weak solution identity~\eqref{eq:aCBO_mf_weak} for all test functions in $\CC_*^2(\RD)$ and $t \in (0,T)$.
    Hence, by applying \eqref{eq:aCBO_mf_weak} with $\phi_k$ as above,
    we obtain for the evolution of $\CV_k(\rho_t)$ that 
    \begin{align*}
    	\frac{d}{dt} \CV_k(\rho_t)
    	&=  -\lambda\int \sum_{k'_k=1}^D ( x - \conspoint{\rho_t})_{k'_k} \partial_{k'_k} \phi_k(x) \, d\rho_t(x) +  \frac{\sigma^2}{2} \int \sum_{{k'_k}=1}^D \left(x-\conspoint{\rho_t}\right)_{k'_k}^2 \partial^2_{k'_kk'_k} \phi_{k}(x) \,d\rho_t(x)\\
    	&=  \underbrace{-\lambda \int \left(x- \globmin \right)_k \left(x-\conspoint{\rho_t}\right)_k d\rho_t(x)}_{=:T_{1k}} +  \underbrace{\frac{\sigma^2}{2} \int \left(x-\conspoint{\rho_t}\right)_k^2 d\rho_t(x)}_{=:T_{2k}},
    \end{align*}
    where we used $\partial_{k'_k} \phi_k(x) = \left(x- \globmin \right)_k \delta_{kk'_k}$ and $\partial^2_{k'_kk'_k} \phi_k(x) = \delta_{kk'_k}$.
    We can now follow the steps taken in~\cite[Lemma~4.1]{fornasier2024consensus}.
    Subtracting and adding $\left( \globmin\right)_k$ in $T_{1k}$ yields
    \begin{align*}
    	T_{1k} &=-\lambda \int \left(x- \globmin\right)_k \left(x- \globmin\right)_k d\rho_t(x) + \lambda \int \left(x- \globmin\right)_k  \, \left(\conspoint{\rho_t} -  \globmin\right)_k d\rho_t(x)\\
    	&= -2\lambda \CV_k(\rho_t) + \lambda \left(\bbE(\rho_t) -  \globmin\right)_k \left(\conspoint{\rho_t}-\globmin\right)_k\\
        &\le -2\lambda \CV_k(\rho_t) + \lambda \SN{\left(\bbE(\rho_t) -  \globmin\right)_k} \SN{\left(\conspoint{\rho_t}-\globmin\right)_k}.
    \end{align*}
    Similarly, again by subtracting and adding $\left( \globmin\right)_k$ in $T_{2k}$, we obtain
    \begin{equation}
    \begin{split}
        T_{2k} &= \frac{\sigma^2}{2} \int \left(x-\conspoint{\rho_t}\right)_k^2  d\rho_t(x) \\
    	\nonumber
    	&= \frac{\sigma^2}{2} \left(\int \left(x-\globmin\right)_k^2 d\rho_t(x) - 2 \int \left(x-\globmin\right)_k  \, \left(\conspoint{\rho_t}-\globmin\right)_k d\rho_t(x) + \left(\conspoint{\rho_t}-\globmin\right)_k^2\right)\\
    	&=\sigma^2 \left(\CV_k(\rho_t) - 
         \left(\bbE(\rho_t) -  \globmin\right)_k \left(\conspoint{\rho_t}-\globmin\right)_k
        + \frac{1}{2}\left(\conspoint{\rho_t}- \globmin\right)_k^2 \right)\\
        & \le \sigma^2 \left(\CV_k(\rho_t) + 
        \SN{\left(\bbE(\rho_t) -  \globmin\right)_k} \SN{\left(\conspoint{\rho_t}-\globmin\right)_k}
        + \frac{1}{2}\left(\conspoint{\rho_t}- \globmin\right)_k^2 \right).
    \end{split}
    \end{equation}
    The result now follows after noting than
    \begin{align*}
    	\SN{\left(\bbE(\rho_t) -  \globmin\right)_k} \leq \int \SN{\left(x- \globmin\right)_k} d\rho_t(v) \leq \sqrt{\int \left(v- \globmin\right)_k^2 d\rho_t(v)}=\sqrt{2\CV_k(\rho_t)},
    \end{align*}
    as a consequence of Jensen's inequality.
\end{proof}

\begin{lemma}
\label{lem:evolutionVk_lower}
    Under the assumptions of Lemma~\ref{lem:evolutionVk_upper}, for all $k=1,\dots,D$, the functionals $\CV_k(\rho_t)$ satisfy
    	\begin{equation}
            \label{eq:evolutionVk_lower}
    	    \frac{d}{dt}\CV_k(\rho_t) \geq -\left(2\lambda-\sigma^2\right) \CV_k(\rho_t) 
            - \sqrt{2}\left(\lambda+\sigma^2\right) \sqrt{\CV_k(\rho_t)} \SN{\left(\conspoint{\rho_t}-\globmin\right)_k}
    	\end{equation}
        for all $t \in (0,T)$.
\end{lemma}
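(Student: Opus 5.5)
The plan mirrors the proof of Lemma~\ref{lem:evolutionVk_upper}, keeping the exact identities and applying the estimates in the opposite direction. Testing the weak formulation~\eqref{eq:aCBO_mf_weak} with $\phi_k(\dummy)=\frac12(\dummy-\globmin)_k^2\in\CC^2_*(\RD)$ gives $\frac{d}{dt}\CV_k(\rho_t)=T_{1k}+T_{2k}$. The computation there yields the exact identities
\begin{align*}
T_{1k}&=-2\lambda\CV_k(\rho_t)+\lambda\left(\bbE(\rho_t)-\globmin\right)_k\left(\conspoint{\rho_t}-\globmin\right)_k,\\
T_{2k}&=\sigma^2\CV_k(\rho_t)-\sigma^2\left(\bbE(\rho_t)-\globmin\right)_k\left(\conspoint{\rho_t}-\globmin\right)_k+\frac{\sigma^2}{2}\left(\conspoint{\rho_t}-\globmin\right)_k^2 .
\end{align*}
These are stated before any inequality is used, so they can be taken verbatim.

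Summing the two identities gives
\begin{equation*}
\frac{d}{dt}\CV_k(\rho_t)=-(2\lambda-\sigma^2)\CV_k(\rho_t)+(\lambda-\sigma^2)\left(\bbE(\rho_t)-\globmin\right)_k\left(\conspoint{\rho_t}-\globmin\right)_k+\frac{\sigma^2}{2}\left(\conspoint{\rho_t}-\globmin\right)_k^2 .
\end{equation*}
For the lower bound, the last term is nonnegative and will simply be dropped. The cross term is bounded from below by minus its absolute value. Using $\SN{\lambda-\sigma^2}\le\lambda+\sigma^2$, it is at least $-(\lambda+\sigma^2)\,\SN{(\bbE(\rho_t)-\globmin)_k}\,\SN{(\conspoint{\rho_t}-\globmin)_k}$.

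It then remains to apply the Jensen estimate already used in Lemma~\ref{lem:evolutionVk_upper}, namely $\SN{(\bbE(\rho_t)-\globmin)_k}\le\sqrt{2\CV_k(\rho_t)}$. This yields exactly \eqref{eq:evolutionVk_lower} for all $t\in(0,T)$.

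There is no real obstacle. The only care needed is to work from the identities rather than from the one-sided bounds in the previous proof, and to check that the discarded term $\frac{\sigma^2}{2}(\conspoint{\rho_t}-\globmin)_k^2$ has the right sign. The constant $\lambda+\sigma^2$ is slightly loose compared with $\SN{\lambda-\sigma^2}$, but it matches the stated inequality.
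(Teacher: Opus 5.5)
Your proposal is correct and follows the paper's own argument. The paper likewise reuses the exact identities from the proof of Lemma~\ref{lem:evolutionVk_upper}, drops the nonnegative term $\frac{\sigma^2}{2}(\conspoint{\rho_t}-\globmin)_k^2$, and bounds the cross terms from below using the same Jensen estimate; grouping the cross terms as $(\lambda-\sigma^2)(\bbE(\rho_t)-\globmin)_k(\conspoint{\rho_t}-\globmin)_k$ and using $\SN{\lambda-\sigma^2}\le\lambda+\sigma^2$ is just a tidier way of writing that step.
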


\begin{proof}
    The lower bound is proven by following the lines of the proof of Lemma~\ref{lem:evolutionVk_upper} and noticing that $\left(\conspoint{\rho_t}-\globmin\right)_k^2 \ge 0$.
\end{proof}

\subsubsection{Evolution of the functional \texorpdfstring{$\CV_\infty$}{Vinf}}
\label{subsec:Vinf_evolution}

Since the functional $\CV_\infty$ is the maximum of finitely many $\CV_k$,
we can use Rademacher's theorem 
(see, e.g., \cite[Section~3.1.2, Theorem~2]{evans2025measure}) to derive evolution inequalities for $\CV_\infty$ that hold almost everywhere.

Throughout this subsection, no additive separability assumption as in Assumption \ref{ass:add_sep} is required on $\CE$.

\begin{lemma} \label{lem:evolutionVinf_upper}
    Let $\CE:\RD\rightarrow\R$, and fix $\alpha,\lambda,\sigma > 0$.
    Moreover, let $T>0$ and let $\rho \in \CC([0,T], \CP_4(\RD))$ be a weak solution to the Fokker-Planck equation~\eqref{eq:aCBO_mf}.
    Then, the functional $\CV_\infty(\rho_t)$ satisfies
    \begin{equation}
    \label{eq:evolutionVinf_upper}
    \begin{split}
        \frac{d}{dt}\CV_\infty(\rho_t) \leq -\left(2\lambda-\sigma^2\right) \CV_\infty(\rho_t) 
        &+ \sqrt{2}\left(\lambda+\sigma^2\right) \sqrt{\CV_\infty(\rho_t)} \max_{k \in \{1,\dots,D\}} \SN{\left(\conspoint{\rho_t}-\globmin\right)_k} \\
        &+ \frac{\sigma^2}{2} \max_{k \in \{1,\dots,D\}} \left(\conspoint{\rho_t}-\globmin\right)_k^2
    \end{split}
    \end{equation}
    for almost every $t \in (0,T)$.
\end{lemma}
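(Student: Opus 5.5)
The plan is to derive the statement from the coordinatewise inequalities of Lemma~\ref{lem:evolutionVk_upper}, combined with a standard argument for the derivative of a maximum of finitely many Lipschitz functions.

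First, I will check that each map $t\mapsto\CV_k(\rho_t)$ is Lipschitz (indeed absolutely continuous) on $[0,T]$. By the weak formulation~\eqref{eq:aCBO_mf_weak}, tested with $\phi_k=\frac12(\dummy-\globmin)_k^2\in\CC^2_*(\RD)$, the derivative $\frac{d}{dt}\CV_k(\rho_t)$ equals $T_{1k}+T_{2k}$ from the proof of Lemma~\ref{lem:evolutionVk_upper}. These terms are bounded uniformly on $[0,T]$ by the second moments of $\rho_t$ and by $\N{\conspoint{\rho_t}}$. Both quantities are bounded on $[0,T]$ because $\rho\in\CC([0,T],\CP_4(\RD))$ and $x_\alpha$ is controlled by moments, for instance via the standard bound on $\N{\conspoint{\rho_t}}_{2,D}^2$ in terms of $\int\N{x}^2_{2,D}d\rho_t$ (as in \cite{carrillo2018analytical}). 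Since $\CV_\infty=\max_k\CV_k$ is a maximum of finitely many Lipschitz functions, it is Lipschitz as well. Rademacher's theorem then shows that $\CV_\infty(\rho_t)$ and all $\CV_k(\rho_t)$ are differentiable for a.e.\ $t\in(0,T)$.

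Second, fix such a $t$ of joint differentiability and choose $k^*$ with $\CV_{k^*}(\rho_t)=\CV_\infty(\rho_t)$. The function $s\mapsto\CV_\infty(\rho_s)-\CV_{k^*}(\rho_s)\ge0$ attains its minimum value $0$ at the interior point $s=t$, and both terms are differentiable there. Hence its derivative vanishes, which gives $\frac{d}{dt}\CV_\infty(\rho_t)=\frac{d}{dt}\CV_{k^*}(\rho_t)$.

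Third, I apply Lemma~\ref{lem:evolutionVk_upper} with $k=k^*$. In the first term I replace $\CV_{k^*}(\rho_t)$ by $\CV_\infty(\rho_t)$; this is an equality. In the remaining terms I bound $\sqrt{\CV_{k^*}}=\sqrt{\CV_\infty}$, $\SN{(\conspoint{\rho_t}-\globmin)_{k^*}}\le\max_k\SN{(\conspoint{\rho_t}-\globmin)_k}$, and similarly the squared term by its maximum over $k$. The coefficients $\sqrt2(\lambda+\sigma^2)$ and $\sigma^2/2$ are nonnegative, so these substitutions only enlarge the right-hand side. The result is~\eqref{eq:evolutionVinf_upper}.

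The main obstacle is the first step: justifying absolute continuity of $\CV_k(\rho_t)$ and uniform boundedness of the consensus point, so that Rademacher's theorem applies. The rest is routine. Note that no sign condition on $2\lambda-\sigma^2$ is needed, because the first term is handled by equality rather than by an inequality.
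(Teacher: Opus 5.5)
Your proposal is correct and follows essentially the same route as the paper. The paper applies Rademacher's theorem to the maximum of the finitely many functions $t\mapsto\CV_k(\rho_t)$, picks a maximizing index $\bar k$ at a point of differentiability, and applies Lemma~\ref{lem:evolutionVk_upper} with the error terms bounded by their maxima over $k$. Your Step~2 (Fermat's rule for the nonnegative function $s\mapsto\CV_\infty(\rho_s)-\CV_{k^*}(\rho_s)$ vanishing at $s=t$) actually justifies $\frac{d}{dt}\CV_\infty(\rho_t)=\frac{d}{dt}\CV_{k^*}(\rho_t)$, which the paper only asserts. Your Step~1, like the paper's claim that each $\CV_k(\rho_t)$ is continuously differentiable, tacitly needs regularity of $\CE$ (e.g.\ Assumption~\ref{ass:well-posedness_E}) to control $\conspoint{\rho_t}$, which the lemma's hypotheses do not state.
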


\begin{proof}
    For each $k \in \{1, \dots, D\}$,
    as observed in Lemma~\ref{lem:evolutionVk_upper},
    the functional $\CV_k(\rho_t)$ is continuously differentiable for any $t \in (0,T)$. 
    Hence, $\CV_\infty(\dummy) = \max_k \CV_k(\dummy)$ is locally Lipschitz as the maximum of finitely many continuously differentiable functions.
    By Rademacher's theorem, $\CV_\infty(\rho_t)$ is differentiable for almost every $t \in (0,T)$.
   
    Let $t$ be a point of differentiability for $\CV_\infty$ and let $\bar{k} \in I(t) \coloneqq \{ k \in \{1,\ldots,D\}: \CV_{k}(\rho_t) = \CV_\infty(\rho_t) \}$ be an index such that $\CV_{\bar{k}}(\rho_t) = \CV_\infty(\rho_t)$. At such a point $t$, it holds
    \begin{equation*}
        \frac{d}{dt} \CV_\infty(\rho_t) = \frac{d}{dt} \CV_{\bar{k}}(\rho_t).
    \end{equation*}
    % \tm{for every $\bar k\in I(t)$.}
    Invoking the coordinate-wise result from Lemma~\ref{lem:evolutionVk_upper} for the index $\bar{k}$, we have
    \begin{align*}
        \frac{d}{dt}\CV_{\bar{k}}(\rho_t) \leq -\left(2\lambda-\sigma^2\right) \CV_{\bar{k}}(\rho_t) 
        + \sqrt{2}\left(\lambda+\sigma^2\right) \sqrt{\CV_{\bar{k}}(\rho_t)} \SN{\left(\conspoint{\rho_t}-\globmin\right)_{\bar{k}}}
        + \frac{\sigma^2}{2} \left(\conspoint{\rho_t}-\globmin\right)_{\bar{k}}^2.
    \end{align*}
    Using the definition of the index $\bar{k}$, and upper-bounding the error terms $\SN{\left(\conspoint{\rho_t}-\globmin\right)_{\bar{k}}}$ by the maximum over all coordinates $k$, we obtain that \eqref{eq:evolutionVinf_upper}  holds for almost every $t \in (0,T)$.
\end{proof}

\begin{lemma} \label{lem:evolutionVinf_lower}
    Under the assumptions of Lemma~\ref{lem:evolutionVinf_upper}, the functional $\CV_\infty(\rho_t)$ satisfies
    \begin{equation}
    \label{eq:evolutionVinf_lower}
        \frac{d}{dt}\CV_\infty(\rho_t) \geq -\left(2\lambda-\sigma^2\right) \CV_\infty(\rho_t) 
        - \sqrt{2}\left(\lambda+\sigma^2\right) \sqrt{\CV_\infty(\rho_t)} \max_{k=1,\dots,D} \SN{\left(\conspoint{\rho_t}-\globmin\right)_k}
    \end{equation}
    for almost every $t \in (0,T)$.
\end{lemma}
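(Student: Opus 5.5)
The argument mirrors the proof of Lemma~\ref{lem:evolutionVinf_upper}, now feeding in the coordinate-wise lower bound of Lemma~\ref{lem:evolutionVk_lower}. Each $\CV_k(\rho_t)$ is continuously differentiable on $(0,T)$, since the weak formulation~\eqref{eq:aCBO_mf_weak} with $\phi_k$ gives a continuous right-hand side. Hence $\CV_\infty(\rho_t)=\max_k\CV_k(\rho_t)$ is locally Lipschitz and, by Rademacher's theorem, differentiable for almost every $t\in(0,T)$.

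Fix such a point of differentiability $t$ and choose $\bar k\in I(t)$, that is, $\CV_{\bar k}(\rho_t)=\CV_\infty(\rho_t)$. I first show that $\frac{d}{dt}\CV_\infty(\rho_t)=\frac{d}{dt}\CV_{\bar k}(\rho_t)$. The function $s\mapsto \CV_\infty(\rho_s)-\CV_{\bar k}(\rho_s)$ is nonnegative and vanishes at $s=t$. It is differentiable at $t$, so $t$ is an interior minimum and its derivative there must be zero. The one-sided difference quotients confirm this: for $h>0$ the quotient of $\CV_\infty-\CV_{\bar k}$ is $\ge0$, and for $h<0$ it is $\le0$.

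Next, apply Lemma~\ref{lem:evolutionVk_lower} to the index $\bar k$:
\begin{equation*}
\frac{d}{dt}\CV_\infty(\rho_t)\geq -(2\lambda-\sigma^2)\CV_{\bar k}(\rho_t)-\sqrt{2}(\lambda+\sigma^2)\sqrt{\CV_{\bar k}(\rho_t)}\,\SN{(\conspoint{\rho_t}-\globmin)_{\bar k}}.
\end{equation*}
Replace $\CV_{\bar k}(\rho_t)$ by $\CV_\infty(\rho_t)$ in both places, which is an equality by the choice of $\bar k$. The error factor satisfies $\SN{(\conspoint{\rho_t}-\globmin)_{\bar k}}\le\max_k\SN{(\conspoint{\rho_t}-\globmin)_k}$. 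Because it enters with a negative sign multiplied by the nonnegative quantity $\sqrt{\CV_\infty(\rho_t)}$, enlarging it only decreases the right-hand side, so the lower bound is preserved. This yields \eqref{eq:evolutionVinf_lower} for almost every $t$.

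The only step needing care is the identification of the derivative of the maximum with that of an active index. Here the two-sided argument above suffices. It also shows that the sign of $2\lambda-\sigma^2$ plays no role, since the linear term is an exact equality rather than an estimate.
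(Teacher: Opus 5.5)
Your proposal is correct and follows the paper's proof essentially verbatim. Both obtain a.e.\ differentiability of $\CV_\infty(\rho_t)$ via Rademacher, identify its derivative with that of an active index $\bar k$, apply Lemma~\ref{lem:evolutionVk_lower} to $\bar k$, and bound the error term by its maximum over $k$; your interior-minimum argument for $\frac{d}{dt}\CV_\infty(\rho_t)=\frac{d}{dt}\CV_{\bar k}(\rho_t)$ usefully justifies a step the paper only asserts.
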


\begin{proof}
    As established in the proof of Lemma~\ref{lem:evolutionVinf_upper}, $\CV_\infty(\rho_t)$ is differentiable for almost every $t \in (0,T)$ and at any point of differentiability $t$, there exists an index $\bar{k} \in I(t) \coloneqq \{ k \in \{1,\ldots,D\}: \CV_{k}(\rho_t) = \CV_\infty(\rho_t) \}$ such that $\CV_{\bar{k}}(\rho_t) = \CV_\infty(\rho_t)$ and $\frac{d}{dt} \CV_\infty(\rho_t) = \frac{d}{dt} \CV_{\bar{k}}(\rho_t)$.
    Applying the coordinate-wise lower bound from Lemma~\ref{lem:evolutionVk_lower} to the index $\bar{k}$, we have
    \begin{equation*}
        \frac{d}{dt}\CV_{\bar{k}}(\rho_t) \geq -\left(2\lambda-\sigma^2\right) \CV_{\bar{k}}(\rho_t) 
        - \sqrt{2}\left(\lambda+\sigma^2\right) \sqrt{\CV_{\bar{k}}(\rho_t)} \SN{\left(\conspoint{\rho_t}-\globmin\right)_{\bar{k}}}.
    \end{equation*}
    The result follows directly from the definition of \(\bar{k}\) and the upper bound on the error terms $\SN{\left(\conspoint{\rho_t}-\globmin\right)_{\bar{k}}}$, as already done in Lemma~\ref{lem:evolutionVinf_upper}.
\end{proof}

%%%%%%%%%%%%%%%%%%%%%%%%%%%%%%%%%%%%%%%%%%%%%%%%%%%%%%%%%%%%%%%%
\subsubsection{Quantitative Laplace principle for \texorpdfstring{$\CE_\ell$,}{El} \texorpdfstring{$\ell \in \{ 1, \ldots, L \}$}{}}
\label{subsec:QLPl}

In this section, we quantitatively characterize the magnitude of $\abs{\conspoint{\rho_t}-\globmin}_k$ by further assuming that $\CE_\ell, \ell \in \{ 1, \ldots, L \},$ satifies Assumption \ref{ass:invcont_QLPl2}.
We remark that the choice of the term to be estimated strictly depends on the evolution inequalities of the energy functionals $\CV_k$ \eqref{def:Vk} derived in Section \ref{subsec:Vk_evolution}.

\begin{proposition}%[$(QLP)_\ell$ for $\CV_k$]
\label{prop:QLPl_Vk}
    Let $\CE:\RD\rightarrow\R$ be additively separable as in Assumption \ref{ass:add_sep}.
    Let $\indivmeasure^{\ell} \in \mathcal{P}(\Rdl)$ for all $\ell \in \{ 1, \ldots, L \}$ and let $\indivmeasure \in \mathcal{P}(\RD)$ be such that $\indivmeasure = \indivmeasure^1 \otimes \cdots \otimes \indivmeasure^L$.
    Fix $\alpha > 0$ and $k \in \{1,\dots,D\}$. 
    Then, there exist $\ell_k \in \{1,\dots,L\}$ such that $k \in \CI_{\ell_k}$ and $k'_k \in \{1,\dots,d_{\ell_k}\}$ such that 
    \begin{equation}
        \label{eq:simplification_diffk}
        \left| \left( \conspoint{\indivmeasure} - \globmin \right)_k \right| = \SN{ \left( x_{\alpha}^{\ell_k}(\indivmeasure^{\ell_k}) \right)_{k'_k} -  \left( \CB_{\ell_k}^\top \globmin \right)_{k'_k}}.
    \end{equation}
    For any $r>0$, define $\CE_{r,\ell_k} \coloneqq \sup_{v \in B^{d_{\ell_k}}_{r}(\CB_{\ell_k}^\top\globmin)} \CE_{\ell_k}(v)$.    
    Then, under the inverse continuity property, Assumption~\ref{ass:invcont_QLPl2}, on $\CE_{\ell_k}$ and assuming w.l.o.g.\@ that $\CEunder_{\ell_k} = 0$,
    for any $\rlk \in (0,R_{0,{\ell_k}}]$ and $\qlk>0$ such that $\qlk + \CE_{\rlk,{\ell_k}} < \CE_{\infty,{\ell_k}}$, we have
    \begin{equation}
    \label{eq:QLPl_Vk}
        \left| \left( \conspoint{\indivmeasure} - \globmin \right)_k \right| 
        \le \frac{(\qlk +\CE_{\rlk,{\ell_k}})^{\nu_{\ell_k}}}{\eta_{\ell_k}} + \frac{\exp(-\alpha \qlk)}{\indivmeasure^{\ell_k}\!\left(\B^{d_{\ell_k}}_{\rlk}(\CB_{\ell_k}^\top\globmin)\right)} \int_{\R^{d_{\ell_k}}} \left| v_{k'_k}- \left( \CB_{\ell_k}^\top \globmin \right)_{k'_k} \right| d\indivmeasure^{\ell_k}(v).
    \end{equation}
\end{proposition}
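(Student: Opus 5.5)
The proof has two parts: a reduction of the $k$-th coordinate of the consensus point to a lower-dimensional consensus point, followed by a quantitative Laplace principle in dimension $d_{\ell_k}$.

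\emph{Reduction.} Since $\CI_1,\dots,\CI_L$ partition $\{1,\dots,D\}$, there is a unique $\ell_k$ with $k\in\CI_{\ell_k}$. Let $k'_k$ be the position of $k$ within $\CI_{\ell_k}$, so that $(\CB_{\ell_k}^\top x)_{k'_k}=x_k$ for all $x\in\RD$. Because $\indivmeasure=\indivmeasure^1\otimes\cdots\otimes\indivmeasure^L$ and $\CE$ is additively separable, Lemma~\ref{lem:sepmeasure_implies_sepcons} gives $\CB_{\ell_k}^\top\conspoint{\indivmeasure}=x_\alpha^{\ell_k}(\indivmeasure^{\ell_k})$. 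Taking the $k'_k$-th entry of this identity and of $\CB_{\ell_k}^\top\globmin$ yields \eqref{eq:simplification_diffk} immediately. By Lemma~\ref{lem:rel_Eunder_Elunder}, $\CB_{\ell_k}^\top\globmin$ is the minimizer of $\CE_{\ell_k}$. Hence everything reduces to a $d_{\ell_k}$-dimensional statement about $\CE_{\ell_k}$, $\indivmeasure^{\ell_k}$ and $v^*\coloneqq\CB_{\ell_k}^\top\globmin$.

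\emph{Laplace principle.} We follow the argument of \cite[Proposition~4.5]{fornasier2024consensus}, adapted to a single coordinate. Set $r=\rlk$, $q=\qlk$, and $\tilde r\coloneqq(q+\CE_{r,\ell_k})^{\nu_{\ell_k}}/\eta_{\ell_k}$. Write the normalized weight $\omega^{\ell_k}_\alpha/\N{\omega^{\ell_k}_\alpha}_{L^1(\indivmeasure^{\ell_k})}$ and use Jensen's inequality to obtain
\[
\big|(x_\alpha^{\ell_k}(\indivmeasure^{\ell_k})-v^*)_{k'_k}\big|\le\int\big|v_{k'_k}-v^*_{k'_k}\big|\frac{\omega^{\ell_k}_\alpha(v)}{\N{\omega^{\ell_k}_\alpha}_{L^1(\indivmeasure^{\ell_k})}}\,d\indivmeasure^{\ell_k}(v).
\]
Then split the integral over $\B^{d_{\ell_k}}_{\tilde r}(v^*)$ and its complement. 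On the ball, $|v_{k'_k}-v^*_{k'_k}|\le\N{v-v^*}_{\infty,d_{\ell_k}}\le\tilde r$, and the normalized weights integrate to at most one, so this part contributes at most the first term of \eqref{eq:QLPl_Vk}.

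On the complement, we need the lower bound $\CE_{\ell_k}(v)\ge q+\CE_{r,\ell_k}$. There are two cases, using $\CEunder_{\ell_k}=0$:
\begin{itemize}
\item If $v\notin\B_{R_{0,\ell_k}}$, then \eqref{ass:invcont_QLPl2_2} gives $\CE_{\ell_k}(v)>\CE_{\infty,\ell_k}>q+\CE_{r,\ell_k}$.
\item If $v\in\B_{R_{0,\ell_k}}\setminus\B_{\tilde r}$, then \eqref{ass:invcont_QLPl2_1} gives $\tilde r<\N{v-v^*}_{\infty,d_{\ell_k}}\le\CE_{\ell_k}(v)^{\nu_{\ell_k}}/\eta_{\ell_k}$, which is equivalent to $\CE_{\ell_k}(v)>q+\CE_{r,\ell_k}$.
\end{itemize}
So $\omega^{\ell_k}_\alpha(v)\le\exp(-\alpha(q+\CE_{r,\ell_k}))$ on the complement. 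For the normalization, Markov-type reasoning gives $\N{\omega^{\ell_k}_\alpha}_{L^1(\indivmeasure^{\ell_k})}\ge\exp(-\alpha\CE_{r,\ell_k})\,\indivmeasure^{\ell_k}(\B^{d_{\ell_k}}_{r}(v^*))$. Taking the ratio of the two bounds produces the factor $\exp(-\alpha q)/\indivmeasure^{\ell_k}(\B_r)$. Bounding the remaining integral over the complement by the integral over all of $\R^{d_{\ell_k}}$ gives the second term of \eqref{eq:QLPl_Vk}.

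\emph{Main obstacle.} The step needing the most care is the case analysis on the complement. It uses the condition $q+\CE_{r,\ell_k}<\CE_{\infty,\ell_k}$ to handle the farfield, and \eqref{ass:invcont_QLPl2_1} to handle the annulus. In the annulus case the inequality must be inverted correctly, which requires $\nu_{\ell_k}>0$ and monotonicity of $s\mapsto s^{\nu_{\ell_k}}$. Everything else in the argument is bookkeeping.
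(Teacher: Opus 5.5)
Your proposal is correct and follows essentially the same route as the paper. Lemma~\ref{lem:sepmeasure_implies_sepcons} gives \eqref{eq:simplification_diffk}, and Jensen's inequality with a split at $\tilde r=(\qlk+\CE_{\rlk,\ell_k})^{\nu_{\ell_k}}/\eta_{\ell_k}$ adapts \cite[Proposition~4.5]{fornasier2024consensus}; your two-case lower bound $\CE_{\ell_k}(v)>\qlk+\CE_{\rlk,\ell_k}$ outside $\B^{d_{\ell_k}}_{\tilde r}(\CB_{\ell_k}^\top\globmin)$ simply spells out what the paper compresses into ``applying the inverse continuity assumption''.
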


\begin{proof}
    A simple application of Lemma \ref{lem:sepmeasure_implies_sepcons} along with the definition of the orthoprojector $\CP_\ell = \CB_\ell \CB_\ell^\top$ shows
    \begin{equation}
    \begin{split}
        \left| \left( \conspoint{\indivmeasure} - \globmin \right)_k \right|
        = \SN{\left( \conspoint{\indivmeasure} \right)_k  - \globmin_k}
        &= \left| \left( \sum_{\ell=1}^L \CB_\ell \conspointcoordinate{\indivmeasure^\ell} \right)_k - \left(\sum_{\ell=1}^L  \CB_\ell \left(\CB_\ell^\top \globmin \right) \right)_k \right| \\
        &= \left| \left( x_{\alpha}^{\ell_k}(\indivmeasure^{\ell_k}) \right)_{k'_k} -  \left( \CB_{\ell_k}^\top \globmin \right)_{k'_k}  \right|
    \end{split}
    \end{equation}
    for some index $\ell_k \in \{1,\dots,L\}$ such that $k \in \CI_{\ell_k}$ and $k'_k \in \{1,\dots,d_{\ell_k}\}$.
    This justifies \eqref{eq:simplification_diffk}.
    The bound \eqref{eq:QLPl_Vk} is obtained by adapting the proof of \cite[Proposition~4.5]{fornasier2024consensus}.
    Using the definition of $x_{\alpha}^{\ell_k}(\indivmeasure^{\ell_k})$ and Jensen's inequality,
    we can write
    \begin{equation}
    \label{eq:QLPl_Vk_proof1}
        \left| \left( x_{\alpha}^{\ell_k}(\indivmeasure^{\ell_k}) \right)_{k'_k} -  \left( \CB_{\ell_k}^\top \globmin \right)_{k'_k}  \right| \le 
        \int_{\R^{d_{\ell_k}}} \left| v_{k'_k} - \left( \CB_{\ell_k}^\top \globmin \right)_{k'_k} \right| \frac{\omega_{\alpha}^{\ell_k}(v)}{\Nbig{\omega_{\alpha}^{\ell_k}}_{L^1(\indivmeasure^{\ell_k})}} d \indivmeasure^{\ell_k}(v).
    \end{equation}
    Now let $\tilde{r}_{\ell_k} > \rlk >0$.
    We can decompose the integral on the right-hand side of \eqref{eq:QLPl_Vk_proof1} into the sum of two integrals:
    one over the ball $\B^{d_{\ell_k}}_{\tilde{r}_{\ell_k}}(\CB_{\ell_k}^\top\globmin)$ and the other over its complement.
    For the first of which, we estimate 
    \begin{equation}
    \label{eq:QLPl_Vk_proof2}
        \int_{\B^{d_{\ell_k}}_{\tilde{r}_{\ell_k}}(\CB_{\ell_k}^\top\globmin)} \left| v_{k'_k} - \left( \CB_{\ell_k}^\top \globmin \right)_{k'_k} \right| \frac{\omega_{\alpha}^{\ell_k}(v)}{\Nbig{\omega_{\alpha}^{\ell_k}}_{L^1(\indivmeasure^{\ell_k})}} d \indivmeasure^{\ell_k}(v) \le \tilde{r}_{\ell_k}
    \end{equation}
    since $\big| v_{k'_k}- \left( \CB_{\ell_k}^\top \globmin \right)_{k'_k} \!\big| \le \N{v-\CB_{\ell_k}^\top \globmin}_{\infty,d_\ell}$, which is bounded by $\tilde{r}_{\ell_k}$ on the set $\B^{d_{\ell_k}}_{\tilde{r}_{\ell_k}}(\CB_{\ell_k}^\top\globmin)$.
    For the latter, recalling the definition of $\CE_{r,\ell_k}$ from the statement, it holds 
    \begin{equation}
    \label{eq:QLPl_Vk_proof3}
    \begin{split}
        &\int_{\left( \B^{d_{\ell_k}}_{\tilde{r}_{\ell_k}}(\CB_{\ell_k}^\top\globmin) \right)^c} \left| v_{k'_k} - \left( \CB_{\ell_k}^\top \globmin \right)_{k'_k} \right| \frac{\omega_{\alpha}^{\ell_k}(v)}{\Nbig{\omega_{\alpha}^{\ell_k}}_{L^1(\indivmeasure^{\ell_k})}} d \indivmeasure^{\ell_k}(v)\\
        &\qquad\quad\,\le 
        \frac{\exp \left( -\alpha \left( \inf_{v \in \left( \B^{d_{\ell_k}}_{\tilde{r}_{\ell_k}}(\CB_{\ell_k}^\top\globmin) \right)^c} \CE_{\ell_k}(v) - \CE_{\rlk,\ell_k} \right) \right) }{\indivmeasure^{\ell_k}\left(\B^{d_{\ell_k}}_{\rlk}(\CB_{\ell_k}^\top\globmin)\right)} \int_{\R^{d_{\ell_k}}} \left| v_{k'_k} - \left( \CB_{\ell_k}^\top \globmin \right)_{k'_k} \right| d \indivmeasure^{\ell_k}(v).
    \end{split}
    \end{equation}
    Choosing $\tilde{r}_{\ell_k} = \big(\qlk + \CE_{\rlk,{\ell_k}}\big)^{\nu_{\ell_k}}/\eta_{\ell_k}$, applying the inverse continuity assumption, Assumption \ref{ass:invcont_QLPl2},
    and plugging \eqref{eq:QLPl_Vk_proof2} and \eqref{eq:QLPl_Vk_proof3} into \eqref{eq:QLPl_Vk_proof1} yields the desired bound \eqref{eq:QLPl_Vk}.
\end{proof} 

\begin{remark}
    \label{rem:whynotstdQLP}
    The proof of \cite[Proposition~4.5]{fornasier2024consensus} needs to be adapted as the integral on the right‑hand side of \eqref{eq:QLPl_Vk_proof1} is taken with respect to $\R^{d_{\ell_k}}$ while the integrand itself is the absolute value of a real number (rather than the $\N{\dummy}_{2,\ell_k}$ norm of a $d_{\ell_k}$-dimensional vector).
\end{remark}

%%%%%%%%%%%%%%%%%%%%%%%%%%%%%%%%%%%%%%%%%%%%%%%%%%%%%%%%%%%%%%%%

\subsubsection{Proof of Theorem \ref{thm:conv_dlg1_Vk}}
\label{subsec:proof_Vk_dlg1}

We now present the proof of Theorem \ref{thm:conv_dlg1_Vk}, which is about the convergence of anisotropic CBO for separable functions.
To do this, we analyze the functional $\CV_{\infty}$.
As already mentioned earlier,
our proof follows the framework developed in \cite{fornasier2024consensus,fornasier2021convergence,riedl2024perspective}.
It combines the Lyapunov-type estimates \eqref{eq:evolutionVk_upper} and \eqref{eq:evolutionVk_lower}, with the quantitative Laplace principle developed in Proposition~\ref{prop:QLPl_Vk}.
To be able to apply the latter,
we require lower-bound estimates of the probability mass surrounding the global minimizer \cite[Proposition~2]{fornasier2021convergence}.
To  facilitate cross-referencing, 
we present in what follows such result for a generic dimension $d$.
In the proof thereafter, we will specify the value of $d$.

\begin{proposition}[{\cite[Proposition~2]{fornasier2021convergence}}]
    \label{prop:lower_bound}

    Fix $\alpha,\lambda,\sigma > 0$. Let $r>0$, $T>0$ and let $\mu \in \CC([0,T], \CP(\Rd))$ be a weak solution to the Fokker-Planck equation~\eqref{eq:aCBO_mf_d}
    with initial condition $\mu_0\in\mathcal{P}(\mathbb{R}^d)$ and for $t\in[0,T]$.
    Then, for all $t\in[0,T]$, we have
    \begin{equation}
        \mu_t(B^\infty_r(\globmin))\ge\left(\int\phi_r(v)\,d\mu_0(v)\right)\exp(-pt)
    \end{equation}
    with $\phi_r:\bbR^{d}\rightarrow\bbR$ a suitable mollifier (as provided in \cite[Lemma~2]{fornasier2021convergence}) and 
    \begin{equation}
        p\coloneqq2d\max \left\{\frac{\lambda(cr+\beta\sqrt{c})}{(1-c)^2r}+\frac{\sigma^2(cr^2+\beta^2)(2c+1)}{(1-c)^4r^2},\frac{2\lambda^2}{(2c-1)\sigma^2}\right\}
    \end{equation}
    for any $\beta<\infty$ with $\sup_{t\in[0,T]} \N{\conspoint{\rho_t}-\globmin}_{\infty,d}\le \beta$ and for any $c\in(1/2,1)$ satisfying $(1-c)^2\le (2c-1)c$.
\end{proposition}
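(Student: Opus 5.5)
The plan follows the strategy of \cite[Proposition~2]{fornasier2021convergence} (and its isotropic precursor in \cite{fornasier2024consensus}): test the weak formulation \eqref{eq:aCBO_mf_weak} of \eqref{eq:aCBO_mf_d} against a smooth bump and show that its integral decays at most exponentially. Concretely, take the tensorized mollifier
\begin{equation*}
\phi_r(v)=\prod_{k=1}^d \phi_{r}^{(k)}(v_k),\qquad \phi^{(k)}_r(v_k)=\exp\Bigl(1-\tfrac{r^2}{r^2-(v-\globmin)_k^2}\Bigr)\ \text{for } |(v-\globmin)_k|<r,
\end{equation*}
and $\phi^{(k)}_r=0$ otherwise. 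Then $0\le\phi_r\le 1$, $\phi_r\in\CC_*^2(\Rd)$, and $\operatorname{supp}\phi_r\subset\B^d_r(\globmin)$, so $\mu_t(\B^d_r(\globmin))\ge\int\phi_r\,d\mu_t$. It therefore suffices to prove $\frac{d}{dt}\int\phi_r\,d\mu_t\ge -p\int\phi_r\,d\mu_t$ and apply Gr\"onwall's inequality.

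Because of the product structure, $\partial_k\phi_r=\phi_r\,\partial_k\log\phi^{(k)}_r$ and $\partial_{kk}\phi_r=\phi_r\bigl((\partial_k\log\phi^{(k)}_r)^2+\partial_{kk}\log\phi^{(k)}_r\bigr)$. The weak identity then reads $\frac{d}{dt}\int\phi_r\,d\mu_t=\sum_{k=1}^d\int \Theta_k(v)\phi_r(v)\,d\mu_t(v)$, where $\Theta_k$ collects the drift term $-\lambda(v-\conspoint{\mu_t})_k\,\partial_k\log\phi^{(k)}_r$ and the diffusion term $\frac{\sigma^2}{2}(v-\conspoint{\mu_t})_k^2\bigl((\partial_k\log\phi_r^{(k)})^2+\partial_{kk}\log\phi^{(k)}_r\bigr)$. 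The anisotropic structure makes each coordinate a genuinely one-dimensional problem. Following \cite[Lemma~2]{fornasier2021convergence}, one shows $\Theta_k\ge -p/(2d)\cdot 2=-p/d$ pointwise on $\operatorname{supp}\phi_r$, using only $|(\conspoint{\mu_t}-\globmin)_k|\le\beta$. Summing over $k$ produces the factor $2d$ in $p$, and hence the linear dependence on $d$.

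The main obstacle is this pointwise lower bound on $\Theta_k$. For fixed $k$ I would split $\{|(v-\globmin)_k|<r\}$ into two regions.
\begin{enumerate}
\item[(a)] The inner region $|(v-\globmin)_k|\le\sqrt c\,r$. Here $\partial_k\log\phi^{(k)}_r$ and $\partial_{kk}\log\phi^{(k)}_r$ are bounded by explicit multiples of $1/((1-c)^2r)$ and $1/((1-c)^4r^2)$. Combined with $|(v-\conspoint{\mu_t})_k|\le\sqrt c\,r+\beta$, this yields the first entry of the maximum in $p$.
\item[(b)] The boundary layer $\sqrt c\,r<|(v-\globmin)_k|<r$. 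Here the positive term $\frac{\sigma^2}{2}(v-\conspoint{\mu_t})_k^2(\partial_k\log\phi_r^{(k)})^2$ must be shown to dominate the negative term $\frac{\sigma^2}{2}(v-\conspoint{\mu_t})_k^2\,\partial_{kk}\log\phi^{(k)}_r$; the condition $(1-c)^2\le(2c-1)c$ guarantees exactly this. After that, the only remaining problem is the drift, which may have a bad sign. It is absorbed by Young's inequality against the spare $(2c-1)$ fraction of the squared-gradient term, which leaves the constant $2\lambda^2/((2c-1)\sigma^2)$.
\end{enumerate}
One more subcase needs care: $(v-\conspoint{\mu_t})_k$ may be small, so that the diffusion gives no help. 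In that subcase the drift points inward, i.e.\ $(v-\conspoint{\mu_t})_k$ has the same sign as $(v-\globmin)_k$, and is therefore nonnegative. I expect this sign bookkeeping to be the delicate part of the argument. Once $\Theta_k\ge -p/d$ holds on the support of $\phi_r$, integrating gives the claimed exponential lower bound with $\int\phi_r\,d\mu_0$ as prefactor.
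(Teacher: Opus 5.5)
Your argument is essentially the proof of \cite[Proposition~2]{fornasier2021convergence}, which the paper imports without proof. The shared elements are the tensorized mollifier, the weak-form identity, the per-coordinate target $\Theta_k\ge -p/d$ on $\operatorname{supp}\phi_r$, and the inner region $|y|\le\sqrt c\,r$ with $y=(v-\globmin)_k$. In that inner region, dropping the nonnegative square and using $\partial_{kk}\log\phi_r^{(k)}=-2r^2(r^2+3y^2)/(r^2-y^2)^3$ gives slightly better than the first entry of $p$, because $(1+3c)(1-c)\le 1+2c$.

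The only genuine difference is in the boundary layer. Write $a=(v-\conspoint{\mu_t})_k$ and $g=\partial_k\log\phi_r^{(k)}$. The cited proof splits the layer according to whether the drift term $-\lambda ag$ is bounded below by $-(c-\tfrac12)\frac{\sigma^2}{2}a^2g^2$; this is their set $K_{2k}^c$, written there in unnormalized form with weight $\tilde c=2c-1$. If it is, the sum is $\ge 0$. If not, that failure itself gives $|ag|<4\lambda/((2c-1)\sigma^2)$; the diffusion term is then discarded as nonnegative, leaving $-4\lambda^2/((2c-1)\sigma^2)$. Your Young's inequality is the one-line version of this dichotomy. It needs no case split, and since it does not throw the diffusion away, it gives a smaller constant. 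Both constants fit inside the stated $p$.

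Two statements in your layer argument need correcting, though neither breaks the route.

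\emph{The spendable share.} The share of $\frac{\sigma^2}{2}a^2g^2$ you may spend on the drift is $\epsilon=c-\tfrac12$, not $2c-1$. Indeed, $(1-\epsilon)g^2+\partial_{kk}\log\phi_r^{(k)}=2r^2(3y^4-2\epsilon r^2y^2-r^4)/(r^2-y^2)^4$. As $y^2\downarrow cr^2$, the bracket tends to $-(1-c)^2r^4<0$ when $\epsilon=2c-1$, so the domination fails. When $\epsilon=c-\tfrac12$, it tends to $(c^2+c-1)r^4$, and nonnegativity of this is exactly the hypothesis $(1-c)^2\le(2c-1)c$; the bracket is also increasing in $y^2$ on the layer. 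Young's inequality then gives $\Theta_k\ge-\lambda^2/((2c-1)\sigma^2)$ there.

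\emph{The extra subcase.} It rests on a false claim: a small $a$ need not have the sign of $y$. Only $|(\conspoint{\mu_t}-\globmin)_k|\le\beta$ is known, so one can have $\sqrt c\,r<y<(\conspoint{\mu_t}-\globmin)_k$ with a small gap. Then the drift points outward and $-\lambda ag<0$. No separate treatment is needed, however. The bound $-\lambda ag\ge-\epsilon\frac{\sigma^2}{2}a^2g^2-\lambda^2/(2\epsilon\sigma^2)$ holds for every value of $a$, so simply delete the sign argument.
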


With this, we now have all necessary tools at hand to present a detailed proof of Theorem~\ref{thm:conv_dlg1_Vk}.

\begin{proof}[Proof of Theorem~\ref{thm:conv_dlg1_Vk}]
    Without loss of generality, we may assume that $\CEunder_\ell = 0$ for all $\ell\in\{1,\dots,L\}$.
    Let $\varepsilon\in(0,\CV_\infty(\rho_{0}))$ be given.
    We define for all $\ell \in \{ 1, \ldots, L \}$
    \begin{equation}
         \label{def:alpha0l}
         \alphazerol
         \coloneqq \frac{1}{\qle} \left( \log\left( \frac{2^{d_{\ell}+1} \sqrt{2\CV_{\infty}(\rho_0)}}{c(\vartheta,\lambda,\sigma)\sqrt{\varepsilon}} \right) + \frac{\ple}{(1-\vartheta)\left(2\lambda-\sigma^2\right)} \log\left( \frac{\CV_{\infty}(\rho_0)}{\varepsilon} \right) - \log \left( \rho_0^{\ell} \left( \B^{d_{\ell}}_{\rle/2}(\CB^\top_{\ell} \globmin) \right) \right) \right) 
    \end{equation}
    where (here, $\eta_\ell, \nu_\ell, \CE_{\infty,\ell}, R_{0,\ell}$ are the parameters from Assumption~\ref{ass:invcont_QLPl2})
    \begin{equation}
        \qle\coloneqq\frac{1}{2}\min\left\{ \left(  \eta_{\ell}\frac{c(\vartheta,\lambda,\sigma)\sqrt{\varepsilon}}{2}\right)^{1/\nu_{\ell}},\CE_{\infty,\ell}\right\}
        \quad \text{ and } \quad
        \rle \coloneqq \max_{s\in[0,R_{0,\ell}]}\left\{ \max_{v \in \B^{d_{\ell}}_{s}(\CB_{\ell}^\top \globmin)} \CE_{\ell}(v)\leq \qle\right\}
    \end{equation}
    with
    \begin{equation}
        c(\vartheta,\lambda,\sigma)\coloneqq\min \left\{ \frac{\vartheta}{2}\frac{\left(2\lambda-\sigma^2\right)}{\sqrt{2}(\lambda+\sigma^2)},\sqrt{\vartheta \frac{\left(2\lambda-\sigma^2\right)}{\sigma^2}}\right\}
    \end{equation}
    and where $\ple$ is as defined in Proposition~\ref{prop:lower_bound} with $d=d_\ell$, $\beta = c(\vartheta,\lambda,\sigma) \sqrt{\CV_{\infty}(\rho_0)}$ and $r = \rle$.
    We note that 
    $\qle >0$, $\rle \in (0,R_{0,\ell})$ and $\qle + \CE_{\rle} \le \CE_{\infty,\ell}$ by construction.
    
    We now select the parameter $\alpha$ to satisfy 
    \begin{equation}
    \label{def:alpha}
        \alpha
        > \alpha_0
        = \max_{\ell \in \{ 1, \ldots, L \}} \alphazerol.
    \end{equation} 
    Moreover, we define the time horizon $T_{\alpha} \ge 0$ according to
     \begin{equation}
        \label{def:Talpha}
            T_\alpha\coloneqq\sup \left\{t\geq0: \CV_\infty(\rho_{t'})>\varepsilon , \quad \SN{(\conspoint{\rho_{t'}}-\globmin)_{k}}<C_\infty(t') \quad \text{for all $t'\in[0,t]$ and $k \in \{1,\dots,D\}$} \right\}
        \end{equation}
    with $C_\infty(t)\coloneqq c(\vartheta,\lambda,\sigma) \sqrt{\CV_\infty(\rho_t)}$.
    Our goal is to prove that $ \CV_{\infty}(\rho_{T_{\alpha}})=\varepsilon$ with $T_{\alpha} \in \big[\frac{1-\vartheta}{1+\vartheta/2}T^*,T^*\big]$ and that the functional $\CV_{\infty}(\rho_t)$ decays at least exponentially until the accuracy $\varepsilon$ is reached at time $T_{\alpha}$.
    
    The first step involves demonstrating that $T_{\alpha}>0$. The regularity $\rho\in \mathcal{C}([0,T],\mathcal{P}_4(\RD))$ implies the continuity of the mappings $t\mapsto\CV_k(\rho_t)$ and $t\mapsto \SN{(\conspoint{\rho_t}-\globmin)_{k}}$ for all $k \in \{1,\dots,D\}$, where we additionally used \cite[Lemma~3.2]{carrillo2018analytical} for the latter.
    In particular, $t\mapsto\CV_\infty(\rho_t)$ is continuous as the maximum of continuous functions. Since $\CV_{\infty}(\rho_0)>\varepsilon$,
    in order to conclude that $T_{\alpha}>0$, it suffices to prove that   $\SN{(\conspoint{\rho_0}-\globmin)_{k}}< C_{\infty}(0)$ for all $k \in \{1,\dots,D\}$.
    To show this, fix $k \in \{1,\dots,D\}$. 
    Then, there exist $\ell_k \in \{1,\dots,L\}$ such that $k \in \CI_{\ell_k}$ and $k'_k  \in \{1,\dots, d_{\ell_k}\}$ such that \eqref{eq:simplification_diffk} holds for $\indivmeasure = \rho_0$. 
    An application of Proposition \ref{prop:QLPl_Vk} yields
    \begin{equation*}
    \begin{split}
        \SN{\left( \conspoint{\rho_0} - \globmin \right)_k} 
        &\le \frac{(q_{\ell_k,\varepsilon} +\CE_{r_{\ell_k,\varepsilon},{\ell_k}})^{\nu_{\ell_k}}}{\eta_{\ell_k}} + \frac{\exp\left(-\alpha q_{\ell_k,\varepsilon}\right)}{\rho_{0}^{\ell_k}\left(\B^{d_{\ell_k}}_{r_{\ell_k,\varepsilon}}(\CB_{\ell_k}^\top\globmin)\right)} \int_{\R^{d_{\ell_k}}} \left| v_{k'_k}- \left( \CB_{\ell_k}^\top \globmin \right)_{k'_k} \right| d\rho_{0}^{\ell_k}(v)\\
        &= \frac{(q_{\ell_k,\varepsilon} +\CE_{r_{\ell_k,\varepsilon},{\ell_k}})^{\nu_{\ell_k}}}{\eta_{\ell_k}} + \frac{\exp\left(-\alpha q_{\ell_k,\varepsilon}\right)}{\rho_{0}^{\ell_k}\left(\B^{d_{\ell_k}}_{r_{\ell_k,\varepsilon}}(\CB_{\ell_k}^\top\globmin)\right)} \int_{\RD} \left| x_{k}- \globmin \right| d\rho_{0}(x).
    \end{split}
    \end{equation*}
    The first summand of the right-hand side can be upper bounded by $c(\vartheta,\lambda,\sigma)\sqrt{\varepsilon}/2$ by using the definitions of $q_{\ell_k,\varepsilon}$ and $r_{\ell_k,\varepsilon}$.
    More precisely, it holds 
    \begin{equation*}
        q_{\ell_k,\varepsilon}+\CE_{r_{\ell_k,\varepsilon},\ell_k} \leq  2q_{\ell_k,\varepsilon} \leq \left(\eta_{\ell_k} \frac{c(\vartheta,\lambda,\sigma)\sqrt{\varepsilon}}{2} \right)^{\frac{1}{\nu_{\ell_k}}}.
    \end{equation*}
    The integral in the second summand of the right-hand side can be upper bounded by $\sqrt{2\CV_{k}(\rho_0)}$ thanks to Jensen's inequality.
    Using that $\alpha > \alphazerol$ with $\alphazerol$ as defined in \eqref{def:alpha0l}, it further holds that
    \begin{equation*}
        \exp\left(-\alpha q_{\ell_k,\varepsilon}\right)
        \le \frac{c(\vartheta,\lambda,\sigma)\sqrt{\varepsilon}}{2^{d_{\ell_k}+1}\sqrt{2\CV_{\infty}(\rho_0)}} \rho_0^{\ell_k} \left(\B^{d_{\ell_k}}_{r_{\ell_k,\varepsilon}/2}(\CB_{\ell_k}^\top \globmin)\right)
        \le \frac{c(\vartheta,\lambda,\sigma)\sqrt{\varepsilon}}{2\sqrt{2\CV_{\infty}(\rho_0)}} \rho_0^{\ell_k} \left(\B^{d_{\ell_k}}_{r_{\ell_k,\varepsilon}}(\CB_{\ell_k}^\top \globmin)\right),
    \end{equation*}
    where the last inequality is immediate.
    Using these observations and recalling that $\CV_k(\dummy) \le \CV_{\infty}(\dummy)$ by definition,
    we can conclude that
    \begin{equation*}
    \begin{split}
        \SN{\left( \conspoint{\rho_0} - \globmin \right)_k}
        &\le \frac{c(\vartheta,\lambda,\sigma)\sqrt{\varepsilon}}{2} + \frac{c(\vartheta,\lambda,\sigma)\sqrt{\varepsilon}}{2\sqrt{2\CV_{\infty}(\rho_0)}} \sqrt{2\CV_{k}(\rho_0)} \\
        & \leq c(\vartheta,\lambda,\sigma)\sqrt{\varepsilon} <  c(\vartheta,\lambda,\sigma) \sqrt{\CV_\infty(\rho_0)} = C_\infty(0)
    \end{split}
    \end{equation*}
    as desired.
    
    Next, we demonstrate that the functional $\CV_{\infty}$ shows at least exponential decay over the time interval $[0,T_\alpha]$. By employing Lemma \ref{lem:evolutionVinf_upper}
    and the definition of $T_\alpha$ in \eqref{def:Talpha},
    we get
    \begin{equation}
    \begin{split}
        \frac{d}{dt}\CV_\infty(\rho_t) &\leq -\left(2\lambda-\sigma^2\right) \CV_\infty(\rho_t) 
        + \sqrt{2}\left(\lambda+\sigma^2\right) \sqrt{\CV_\infty(\rho_t)} \max_{k \in \{1,\dots,D\}} \SN{\left(\conspoint{\rho_t}-\globmin\right)_k} \\
        & \phantom{-\left(2\lambda-\sigma^2\right) \CV_\infty(\rho_t)}\quad\,\,
        +\frac{\sigma^2}{2} \max_{k \in \{1,\dots,D\}} \left(\conspoint{\rho_t}-\globmin\right)_k^2\\
        &\leq -\left(2\lambda-\sigma^2\right) \CV_\infty(\rho_t) 
        + \sqrt{2}\left(\lambda+\sigma^2\right) c(\vartheta,\lambda,\sigma) \CV_\infty(\rho_t) + \frac{\sigma^2}{2} c^2(\vartheta,\lambda,\sigma) \CV_\infty(\rho_t)\\
        &\le -(1-\vartheta)\left(2\lambda-\sigma^2\right) \CV_\infty(\rho_t),
    \end{split}
    \end{equation}
    for almost every $t \in (0,T_\alpha)$. 
    Applying Gr\"onwall’s inequality and using the continuity of $t\mapsto \CV_{\infty}(\rho_t)$, we obtain
    
    \begin{equation}
    \label{eq:upperbound_functional}
        \CV_{\infty}(\rho_t)
        \leq \CV_{\infty}(\rho_0)\exp\left(-(1-\vartheta)\left(2\lambda-\sigma^2\right)t\right) \quad  \text{ for all } t\in[0,T_{\alpha}].
    \end{equation}
    At the same time, by utilizing Lemma~\ref{lem:evolutionVinf_lower} and following analogous computations,
    we can derive that the functional $\CV_\infty$ exhibits at most exponential decay, i.e.,
    \begin{equation}
        \label{eq:lowerbound_functional}
        \CV_{\infty}(\rho_t)\geq \CV_{\infty}(\rho_0)\exp\left(-\left(1+\vartheta/2\right)\left(2\lambda-\sigma^2\right)t\right) \quad \text{ for all } t\in[0,T_{\alpha}].
    \end{equation}
    To conclude, it remains to show that $\CV_{\infty}(\rho_{T_\alpha}) = \varepsilon$ for $T_\alpha \in \big[\frac{1 - \vartheta}{1 + \vartheta/2} T^*, T^*\big]$.
    We will do this by considering three distinct cases.
    \begin{itemize}
            \item \textbf{Case $\bm{T_{\alpha}\geq T^*}$}:
            Evaluating the inequality \eqref{eq:upperbound_functional} at time $T^*$ and exploiting the definition of $T^*$ in \eqref{def:Tstar}, we obtain that $\CV_{\infty}(\rho_{T^*}) \le \varepsilon$. 
            Consequently, by the definition of $T_{\alpha}$ in \eqref{def:Talpha} and the continuity of $t\mapsto \CV_{\infty}(\rho_t)$, we have that $\CV_{\infty}(\rho_{T_\alpha})=\varepsilon$ with $T_{\alpha}=T^*$.
            \item \textbf{Case $\bm{T_{\alpha}< T^*}$ and $\bm{\CV_{\infty}(\rho_{T_{\alpha}})\leq\varepsilon}$}:
            From the definition of $T_{\alpha}$ in \eqref{def:Talpha} and the continuity of the functional $t\mapsto \CV_{\infty}(\rho_t)$, it is possible to conclude that $\CV_{\infty}(\rho_{T_{\alpha}})=\varepsilon$. From the lower bound of the functional \eqref{eq:lowerbound_functional}, we hence obtain
            $\varepsilon=\CV_{\infty}(\rho_{T_{\alpha}})\geq \CV_{\infty}(\rho_0) \exp{\left(-(1+\vartheta/2)\left(2\lambda-\sigma^2\right)T_{\alpha}\right)}$,
            which is equivalent to
            \begin{equation}
                \frac{1}{(1+\vartheta/2)\left(2\lambda-\sigma^2\right)}\log\left(\frac{\CV_{\infty}(\rho_0)}{\varepsilon} \right)\leq T_{\alpha}.
            \end{equation}
            By definition of $T^*$ in \eqref{def:Tstar}, it holds $\frac{1-\vartheta}{1+\vartheta/2}T^*= \frac{1}{(1+\vartheta/2)\left(2\lambda-\sigma^2\right)}\log\left({\CV_{\infty}(\rho_0)/\varepsilon} \right)$, so we can conclude
            \begin{equation}
                \frac{1-\vartheta}{1+\vartheta/2}T^*\leq T_{\alpha}<T^*.
            \end{equation}
            \item \textbf{Case $\bm{T_{\alpha}< T^*}$ and $\bm{\CV_{\infty}(\rho_{T_{\alpha}})>\varepsilon}$}:
            We show that this case can never occur by proving that $\SN{(\conspoint{\rho_{T_{\alpha}}}-\globmin)_{k}}< C_{\infty}(T_{\alpha})$ for all $k \in \{1,\dots,D\}$ due to the previously made choice of $\alpha$.
            This would contradict the definition of $T_\alpha$ in \eqref{def:Talpha}.
            To show this, fix $k \in \{1,\dots,D\}$. 
            Then, there exist $\ell_k \in \{1,\dots,L\}$ such that $k \in \CI_{\ell_k}$ and $k'_k  \in \{1,\dots, d_{\ell_k}\}$ such that \eqref{eq:simplification_diffk} holds for $\indivmeasure = \rho_{T_\alpha}$.
            The key steps are similar to those taken earlier in the proof at time $t=0$. 
            Specifically, an application of Proposition \ref{prop:QLPl_Vk} yields
            \begin{equation}
            \begin{split}
                \SN{(\conspoint{\rho_{T_{\alpha}}}-\globmin)_{k}}
                & \le \frac{(q_{\ell_k,\varepsilon} +\CE_{r_{\ell_k,\varepsilon},{\ell_k}})^{\nu_{\ell_k}}}{\eta_{\ell_k}} + \frac{\exp\left(-\alpha q_{\ell_k,\varepsilon}\right)}{\rho_{T_{\alpha}}^{\ell_k}\left(\B^{d_{\ell_k}}_{r_{\ell_k,\varepsilon}}(\CB_{\ell_k}^\top\globmin)\right)} \int_{\R^{d_{\ell_k}}} \left| v_{k'_k}- \left( \CB_{\ell_k}^\top \globmin \right)_{k'_k} \right| d\rho_{T_{\alpha}}^{\ell_k}(v)\\
                & = \frac{(q_{\ell_k,\varepsilon} +\CE_{r_{\ell_k,\varepsilon},{\ell_k}})^{\nu_{\ell_k}}}{\eta_{\ell_k}} + \frac{\exp\left(-\alpha q_{\ell_k,\varepsilon}\right)}{\rho_{T_{\alpha}}^{\ell_k}\left(\B^{d_{\ell_k}}_{r_{\ell_k,\varepsilon}}(\CB_{\ell_k}^\top\globmin)\right)} \int_{\RD} \left| x_k- \globmin_{k} \right| d\rho_{T_{\alpha}}(x)\\
                &\leq \frac{(q_{\ell_k,\varepsilon} +\CE_{r_{\ell_k,\varepsilon},{\ell_k}})^{\nu_{\ell_k}}}{\eta_{\ell_k}} + \frac{\exp\left(-\alpha q_{\ell_k,\varepsilon}\right)}{\rho_{T_{\alpha}}^{\ell_k}\left(\B^{d_{\ell_k}}_{r_{\ell_k,\varepsilon}}(\CB_{\ell_k}^\top\globmin)\right)} \sqrt{\CV_{k}(\rho_{T_\alpha})} \\
                &\leq \frac{(q_{\ell_k,\varepsilon} +\CE_{r_{\ell_k,\varepsilon},{\ell_k}})^{\nu_{\ell_k}}}{\eta_{\ell_k}} + \frac{\exp\left(-\alpha q_{\ell_k,\varepsilon}\right)}{\rho_{T_{\alpha}}^{\ell_k}\left(\B^{d_{\ell_k}}_{r_{\ell_k,\varepsilon}}(\CB_{\ell_k}^\top\globmin)\right)} \sqrt{\CV_{\infty}(\rho_{T_\alpha})},
            \end{split}
            \end{equation}
            where the inequality in the next-to-last line is due to Jensen's inequality and
            the last step due to $\CV_{k}(\rho_{T_\alpha})\leq \CV_{\infty}(\rho_{T_\alpha})$.
            Thanks to Proposition \ref{prop:lower_bound} it further holds that
             \begin{equation}
            \begin{split}
                \rho_{T_{\alpha}}^{\ell_k}\left(\B^{d_{\ell_k}}_{r_{\ell_k,\varepsilon}}(\CB_{\ell_k}^\top\globmin)\right)
                &\geq \left( \int_{\R^{d_{\ell_k}}} \phi_{r_{\ell_k,\varepsilon}}(v) \,d\rho_0^{\ell_k}(v)\right)\exp(-p_{\ell_k,\varepsilon} T_{\alpha})\\
                 &\geq \frac{1}{2^{d_{\ell_k}}} \rho_0^{\ell_k}\left(\B^{d_{\ell_k}}_{r_{\ell_k,\varepsilon}/2}(\CB_{\ell_k}^\top\globmin)\right)\exp{\left(-p_{\ell_k,\varepsilon} T^*\right)}>0,
            \end{split}
            \end{equation}
            allowing us to continue the former as
            \begin{equation}
            \begin{split}
                \SN{(\conspoint{\rho_{T_{\alpha}}}-\globmin)_{k}}
                &\leq \frac{(q_{\ell_k,\varepsilon} +\CE_{r_{\ell_k,\varepsilon},{\ell_k}})^{\nu_{\ell_k}}}{\eta_{\ell_k}} + \frac{2^{d_{\ell_k}}\exp\left(-\alpha q_{\ell_k,\varepsilon}\right)\exp{\left(-p_{\ell_k,\varepsilon} T^*\right)}}{\rho_{0}^{\ell_k}\left(\B^{d_{\ell_k}}_{r_{\ell_k,\varepsilon}/2}(\CB_{\ell_k}^\top\globmin)\right)} \sqrt{\CV_{\infty}(\rho_{T_\alpha})}.
            \end{split}
            \end{equation}
            The first summand of the right-hand side can be upper bounded as in the computations at time $t=0$ by $c(\vartheta,\lambda,\sigma)\sqrt{\varepsilon}/2$ by using the definitions of $q_{\ell_k,\varepsilon}$ and $r_{\ell_k,\varepsilon}$.
            Using that $\alpha > \alphazerol$ with $\alphazerol$ as defined in \eqref{def:alpha0l}, it further holds that
            \begin{equation*}
                \exp\left(-\alpha q_{\ell_k,\varepsilon}\right)
                \le \frac{c(\vartheta,\lambda,\sigma)\sqrt{\varepsilon}}{2^{d_{\ell_k}+1}\sqrt{2\CV_{\infty}(\rho_0)}} \exp{\left(p_{\ell_k,\varepsilon} T^*\right)} \rho_0^{\ell_k} \left(\B^{d_{\ell_k}}_{r_{\ell_k,\varepsilon}/2}(\CB_{\ell_k}^\top \globmin)\right).
            \end{equation*}
            Using these observations,
            we can conclude that
            \begin{equation}
            \begin{split}
                \SN{(\conspoint{\rho_{T_{\alpha}}}-\globmin)_{k}}
                &\leq \frac{c(\vartheta,\lambda,\sigma)\sqrt{\varepsilon}}{2} +  \frac{c(\vartheta,\lambda,\sigma)\sqrt{\varepsilon}}{2\sqrt{2\CV_{\infty}(\rho_0)}} \sqrt{2\CV_{\infty}(\rho_{T_\alpha})}\\
                &< \frac{c(\vartheta,\lambda,\sigma)\sqrt{\varepsilon}}{2} +  \frac{c(\vartheta,\lambda,\sigma)}{2} \sqrt{\CV_{\infty}(\rho_{T_\alpha})}\\
                & < c(\vartheta,\lambda,\sigma)\sqrt{\CV_\infty(\rho_{T_{\alpha}})}
                = C_\infty(T_{\alpha}),
            \end{split}
            \end{equation}
            where we used in the second line that 
            $\CV_{\infty}(\rho_0)>\varepsilon$
            and in the last step that $\CV_{\infty}(\rho_{T_{\alpha}})>\varepsilon$ by assumption of this case, hence the desired contradiction.
    \end{itemize} 
    This concludes the proof.
\end{proof}

\begin{remark}
\label{rem:proof_Vk_dlg1_specialcases}
As already mentioned in Remark \ref{rem:notablechoicesL}, the expression \eqref{eq:add_sep} of $\CE$ simplifies for the special choices $L=1$ ($\intrd = D$) and $\intrd = 1$ ($L=D$).
In this remark, we comment on the convergence of the anisotropic CBO method \eqref{eq:aCBO_McKean} in these two special scenarios for the functional $\CV_{\infty}$.

(i) If $L=1$, $\CE$ is non-separable, which corresponds to the case of general objectives already treated in~\cite[Theorem~3.6]{riedl2024perspective}. There, a mean‑field convergence result was established for the functional $\CV(\indivmeasure) = \frac{1}{2} \int_{\RD} \N{x-\globmin}_{2,D}^2 d\indivmeasure(x)$ for $\indivmeasure \in \CP_2(\RD)$, with $\alpha$ being chosen greater than some \(\alpha_{0}\) scaling as \(\CO(D)\). We recover this result by using the functional \(\CV_{\infty}\) under the identical Assumption~\ref{ass:invcont_QLP_glob}.

(ii) If $L=D$ and $\intrd = 1$, $\CE$ is fully separable and
Theorem \ref{thm:conv_dlg1_Vk} simplifies.
In this case, condition \eqref{eq:simplification_diffk} appearing in the new quantitative Laplace principle (Proposition \ref{prop:QLPl_Vk}) reads $\left| \left( \conspoint{\rho_t} - \globmin \right)_k \right| = \left|  x_{\alpha}^{k}(\rho_t^{k})-  \globmin_{k}  \right|$, $k\in\{1,\ldots,D\}$
and the right-hand side of \eqref{eq:QLPl_Vk} reduces to an integral over $\R$
(cf.\@ Remark \ref{rem:whynotstdQLP}),
allowing us to directly apply~\cite[Proposition~1]{fornasier2021convergence} in the $1$-dimensional case to 
eventually derive that $\alpha$ needs to be chosen larger than $\widetilde\alpha_{0}^{L} \coloneqq \max_{k \in \{1,\dots,D\}} \widetilde\alpha_{0,k}^{L}$ with
\begin{equation}
     \begin{split}
     \widetilde\alpha_{0,k}^{L}\coloneqq \frac{1}{q_{k,\varepsilon}} \left( \log\left( \frac{2^{2} \sqrt{2\CV_{\infty}(\rho_0)}}{c(\vartheta,\lambda,\sigma)\sqrt{\varepsilon}} \right) + \frac{p_{k,\varepsilon}}{(1-\vartheta)\left(2\lambda-\sigma^2\right)} \log\left( \frac{\CV_{\infty}(\rho_0)}{\varepsilon} \right) - \log \left( \rho_0^{k} ( \B^{1}_{r_{k,\varepsilon}/2}(\globmin_k) ) \right) \right),
     \end{split}
 \end{equation}
where $q_{k,\varepsilon}, r_{k,\varepsilon}$ and $p_{k,\varepsilon}$ are defined as in the proof of Theorem \ref{thm:conv_dlg1_Vk}.
\end{remark}

%%%%%%%%%%%%%%%%%%%%%%%%%%%%%%%%%%%%%%%%%%%%%%%%%%
%%%%%%%%%% Section %%%%%%%%%%%%%%%%%%%%%%%%%%%%%%%
%%%%%%%%%%%%%%%%%%%%%%%%%%%%%%%%%%%%%%%%%%%%%%%%%%
\section{Numerical experiments} 
\label{sec:numerics}
To analyze how different levels of complexity (in particular, different levels of separability and complexity within the separable components themselves) of the objective function
affect the performance of anisotropic CBO,
we compare the success probabilities of the anisotropic CBO method when minimizing two types of objective functions:
a standard Rastrigin-type function of the form \eqref{eq:rastrigin}, which is by design fully separable, i.e., $\intrd = 1$,
and rotated versions of it, which, by design, are not fully separable and can have varying intrinsic dimensionalities~$\intrd$ that we can specify explicitly, see Section~\ref{sec:numerics_intrd}.
This allows us to investigate the influence of the level of separability of the objective function.
Beyond that, we compare the performance of the anisotropic CBO method for different magnitudes of oscillation and hence non-convexity within the components of the rotated Rastrigin function, see Section~\ref{sec:numerics_cD}.
This permits us to study the influence of the complexity of the objective function within the separable components.

By doing so, we numerically verify our theoretical findings which are concisely summarized in Remark~\ref{rem:crucial_remark}.
Firstly, we demonstrate that the performance of anisotropic CBO~\eqref{eq:aCBO_micro_EM} deteriorates as the function becomes less separable, verifying the scaling of $\alpha_0$ on the intrinsic dimension $\intrd$. 
On a side note, we show that isotropic CBO, which is as \eqref{eq:aCBO_micro_EM} once the diffusion term is replaced by $\sigma \Nbig{\Xihat{h\dt} - \conspoint{\empmeasure{h\dt}}}_{2,D} B^{(i)}_{h\dt}$,
is unaffected by such additive structure.
Secondly, we show that the complexity depends on the properties of the individual component functions $\CE_\ell$.

\subsection{Experimental setting} 
\label{sec:numerics_setting}

The standard Rastrigin function $\CE : \mathbb{R}^D \to \mathbb{R}$ is defined as
\begin{equation}
\label{eq:rastrigin}
    \CE(x) = \sum_{k=1}^{D} \left( x_k^2 - (c_D)_k \cos(2\pi x_k) + (c_D)_k \right),
\end{equation}
which is a non-convex, axis-aligned function with a unique global minimizer at $\globmin = (0,\ldots,0) \in \mathbb{R}^D$. The constants $(c_D)_k$ quantifies the magnitude of the oscillatory component of the function, describing the level of non-convexity and hence complexity.

By varying the constant $(c_D)_k$, we can increase or decrease the non-convexity in a certain coordinate direction by making $(c_D)_k$ larger or smaller, respectively.
This is the focus of Section~\ref{sec:numerics_cD}.

In Section~\ref{sec:numerics_intrd}, we focus on the level of separability.
It is straightforward to see that $\CE$  in \eqref{eq:rastrigin} is fully separable, i.e., $\intrd=1$.
However, as we begin to gradually rotate the function across more and more dimensions, we weaken its additive‑separability property.
More precisely, we consider multiple rotated Rastrigin functions, defined according to
\begin{equation}
\label{eq:rotated_rastrigin}
\CE_R(x) = \CE(Rx),
\end{equation}
where $R \in O(D)\subset\mathbb{R}^{D \times D}$ is an orthogonal matrix.

The effects of both such rotation on the Rastrigin function and the altering of the non-convexity parameters~$(c_D)_k$ is illustrated in Figure~\ref{fig:standrotRastrigin} in two dimensions.
\begin{figure}[ht!]
    \centering
    \hphantom{--}
    % Subfigure 1
    \begin{subfigure}[b]{0.3\textwidth}
        \centering
        % Include your first heatmap image here
        \includegraphics[width=1\textwidth]{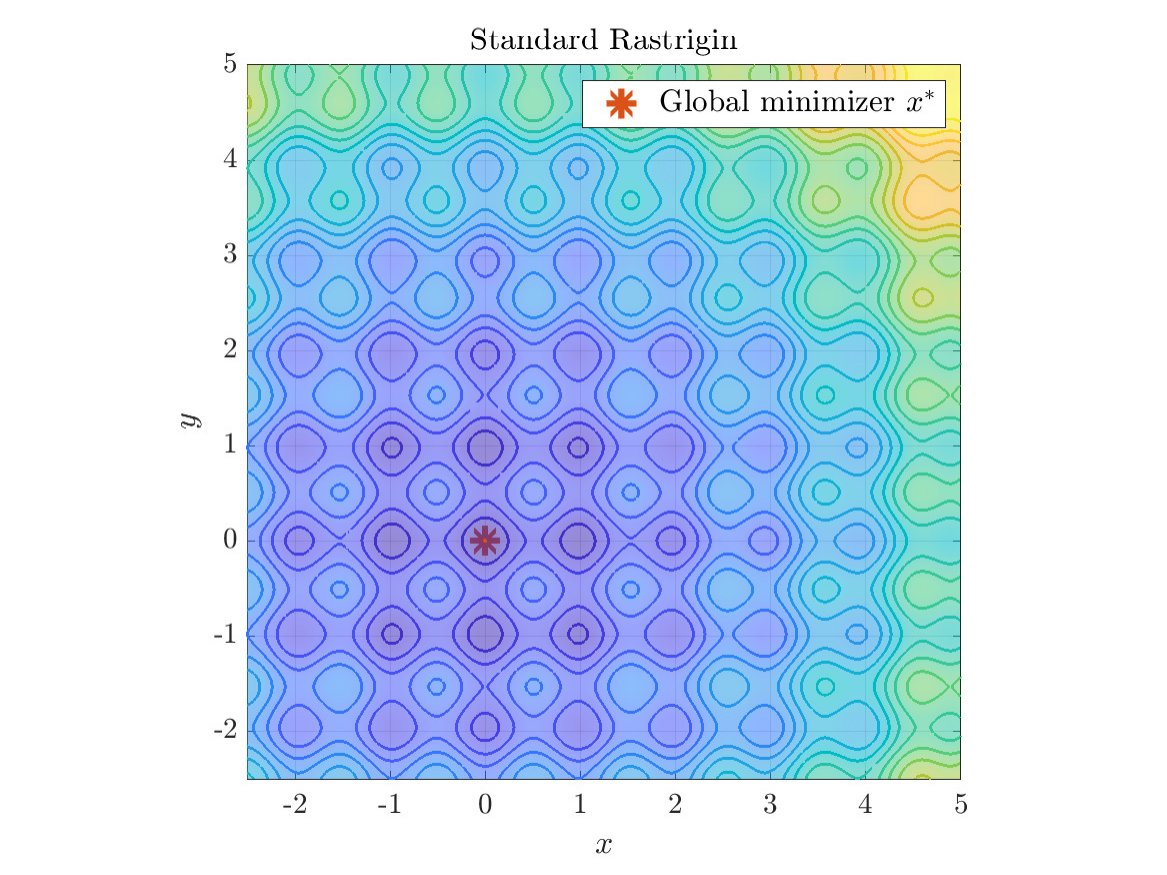}
        \caption{Standard Rastrigin function~\eqref{eq:rastrigin} with $(c_D)_1=(c_D)_2={10}$}
        \label{fig:standardRastrigin}
    \end{subfigure}
    \hfill
    % Subfigure 2
       \begin{subfigure}[b]{0.3\textwidth}
        \centering
        % Include your third heatmap image here
        \includegraphics[width=1\textwidth]{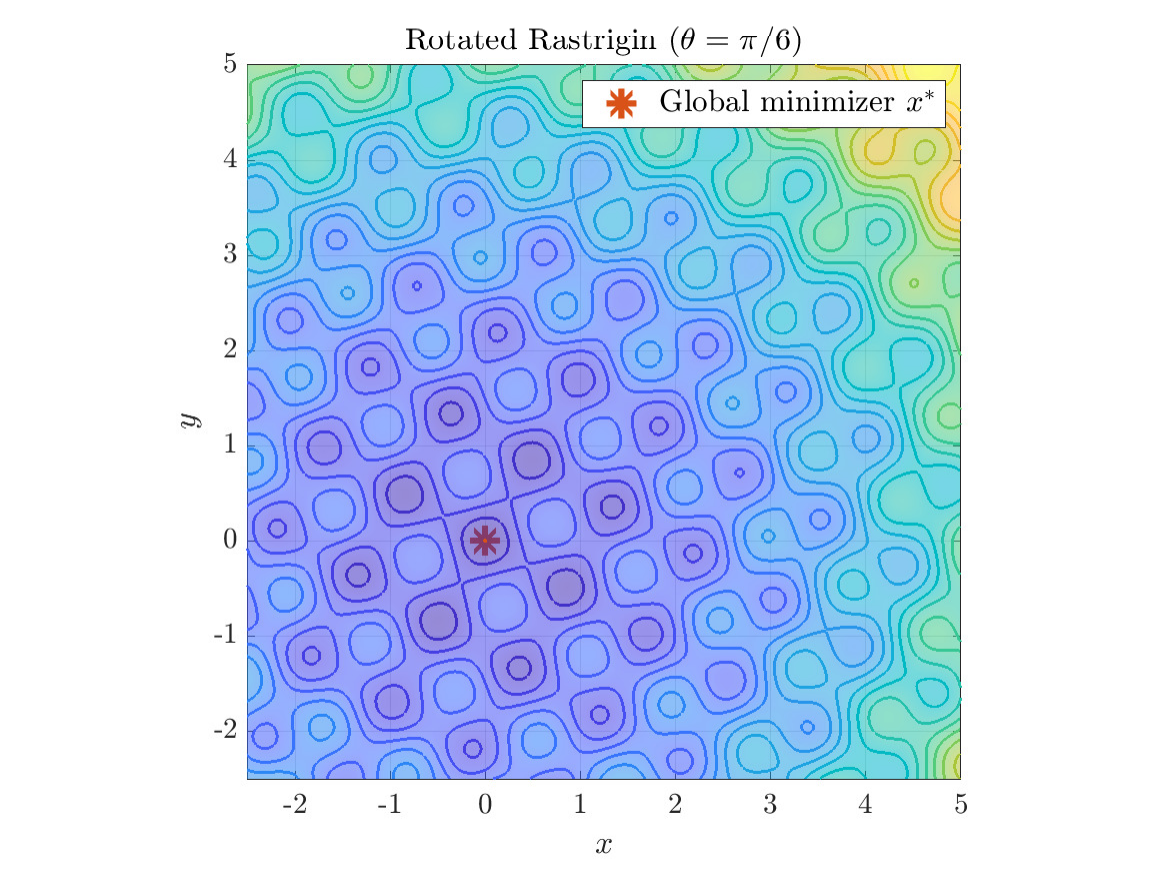}
        \caption{Rotated Rastrigin function~\eqref{eq:rotated_rastrigin} with $(c_D)_1={10}$, $(c_D)_2={10}$}
        \label{fig:rotatedRastrigin}
    \end{subfigure}
    \hfill
    % Subfigure 3
       \begin{subfigure}[b]{0.3\textwidth}
        \centering
        % Include your third heatmap image here
        \includegraphics[width=1\textwidth]{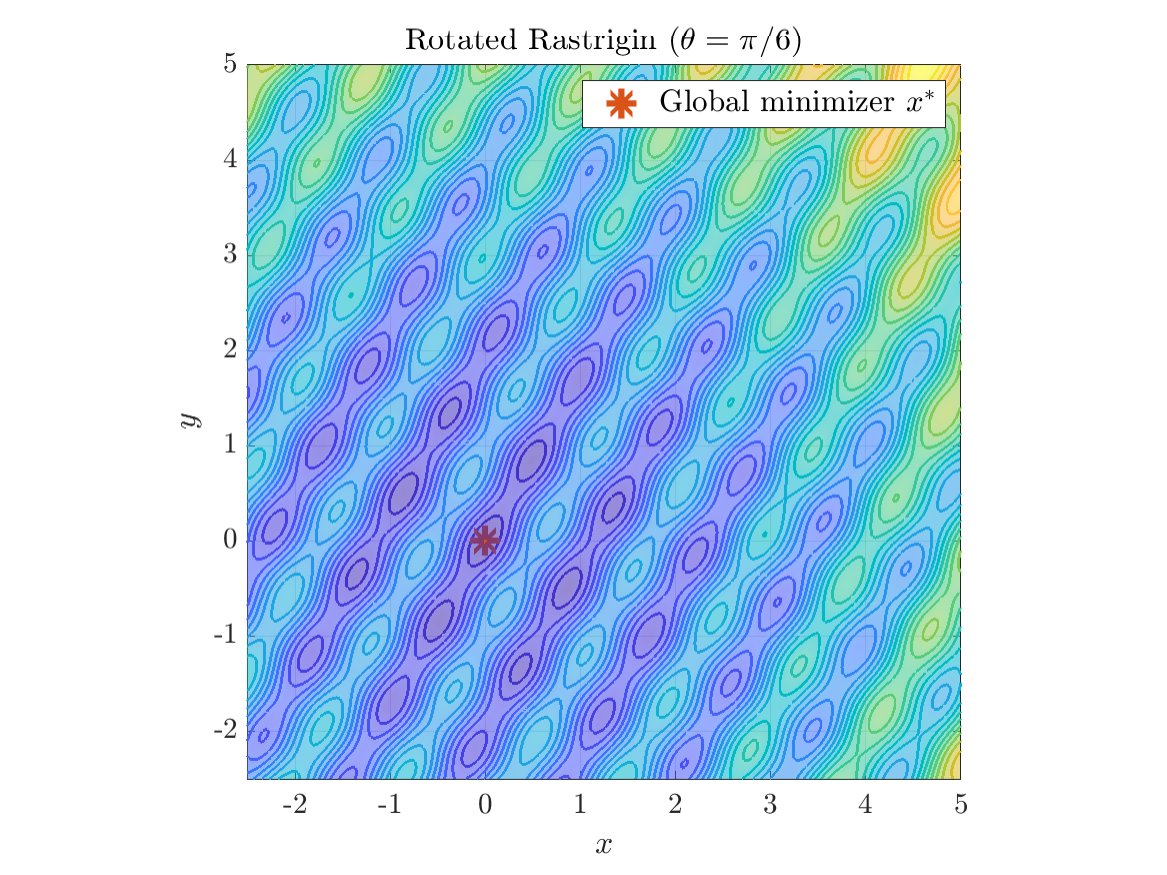}
        \caption{Rotated Rastrigin function~\eqref{eq:rotated_rastrigin} with $(c_D)_1={10}$, $(c_D)_2={2.5}$}
        \label{fig:rotatedRastrigin_cD}
    \end{subfigure}
    \hphantom{--}
    \caption{Comparison between the standard Rastrigin function (fully separable) and a rotated version of the function by $\pi/6$ (non-separable) with different levels of non-convexity. The rotation mixes the variables, thereby preventing additive separability.}
    \label{fig:standrotRastrigin}
\end{figure}

In order to generate objective functions with different levels of additive separability,
we design, as depicted in Figure~\ref{fig:blocks}, block-diagonal rotation matrices $R$ of the form
\begin{equation*}
    R = \mathrm{diag}(Q_1, \dots, Q_L) \in O(D)
\end{equation*}
with $Q_\ell\in O(d_\ell)$ for all $\ell \in \{1,\dots,L\}$.
\begin{figure}
\begin{center}
\usetikzlibrary{matrix,fit,positioning,calc}

\begin{tikzpicture}[baseline=(m.center),scale=01]
    \matrix (m) [matrix of math nodes,
             left delimiter=(, right delimiter=),
             nodes={ text width=3mm, text height=1.5mm, text depth=1.5mm, text=white, align=center, inner sep=0pt},
             column sep=0.15mm, row sep=0.15mm
              ]{      
        a_{11} & a_{12} & a_{13} &      &      &      &      &      \\
        a_{21} & a_{22} & a_{23} &      &      &      &      &      \\
        a_{31} & a_{32} & a_{33} &      &      &      &      &      \\
        &       &       & b_{44}&      &      &      &      \\
        &       &       &      & c_{55} & c_{56}&      &      \\
        &       &       &      & c_{65} & c_{66}&      &      \\
        &       &       &      &      &      & d_{77} & d_{78} \\
        &       &       &      &      &      & d_{87} & d_{88} \\
  };
  % draw colored boxes using fit (use the node names m-row-col)
  \node[draw=blue, very thick, fill=blue!12, inner sep=0.04pt, fit=(m-1-1) (m-3-3)] (B1) {};
  \node[draw=blue, very thick, fill=blue!12, inner sep=0.04pt, fit=(m-4-4) (m-4-4)] (B2) {};
  \node[draw=blue, very thick, fill=blue!12, inner sep=0.04pt, fit=(m-5-5) (m-6-6)] (B3) {};
  \node[draw=blue, very thick, fill=blue!12, inner sep=0.04pt, fit=(m-7-7) (m-8-8)] (B4) {};
\end{tikzpicture}
\caption{Conceptual illustration of the block-diagonal structure of the rotation matrix $R$. Each colored block represents a full rotation matrix $Q_\ell\in O(d_\ell)$.}
\label{fig:blocks}
\end{center}
\end{figure}
Each block corresponds to a subset of $d_\ell$ variables that are linearly mixed together through the orthogonal transformation~$Q_{\ell}$.
Then, \eqref{eq:rotated_rastrigin} yields a rotated Rastrigin function $\CE_R$
which has precisely $L$ separable components with individual dimensions $d_\ell$.

From a technical stance, in order to obtain the orthogonal matrices~$Q_\ell$, we generate a random matrix
$A_\ell \in \mathbb{R}^{d_\ell \times d_\ell}$ with $(A_\ell)_{ij} \sim \mathcal{N}_1(0,1)$,
and compute its QR decomposition $A_\ell = Q_\ell R_\ell$, using the orthogonal factor $Q_\ell$ to define our rotation.
By varying the block sizes $d_\ell$ and the number of blocks $L$,
we generate a family of rotated functions with different degrees of separability: smaller blocks (larger $L$) correspond to rotations that preserve most of the original separability, while larger blocks (smaller $L$) progressively introduce stronger coupling among variables. The limiting cases are when $d_\ell = 1$ for all $\ell$, $R$ is a signed permutation matrix with entries $\pm 1$, and the problem remains fully separable, and when $L = 1$, $R$ is a dense orthogonal matrix of size $D \times D$, coupling all variables.

In order to vary the complexity of the objective function within the separable components, we investigate the influence of the magnitude of the oscillation~$(c_D)_k$.

\subsection{Influence of the intrinsic dimensionality for dimension \texorpdfstring{$\intrd$}{} and the level of separability}
\label{sec:numerics_intrd}

In this section, we investigate the influence of the intrinsic dimensionality $\intrd$ and the level of separability of the objective function on the performance of the anisotropic CBO algorithm.
To do so, we compare success rates for the standard (fully separable) Rastrigin function and its rotated versions, which, by design, have varying intrinsic dimensionalities~$\intrd$ and component structures.
We showcase experiments in $D=4$ and $D=8$.

\subsubsection{Experiments for dimension \texorpdfstring{$D=4$}{4}}
\label{sec:numerics_intrd_D4}

The experiments in dimension $D = 4$ are performed using the parameters
\begin{equation*}
    \lambda = 1, \sigma = \sqrt{1.6}, \quad T =5, \Delta t = 10^{-3}, \quad \rho_0 \sim \CN_D((3,\ldots,3)^T,20I_D).
\end{equation*}  
In Figure~\ref{fig:heatmaps_cbo_ANISO4D}, we report the success rates of the CBO method with anisotropic noise for the standard Rastrigin function and for several rotated variants with different intrinsic dimensions.
Throughout this section, $(c_D)_k=10$ for all $k \in \{1,\dots,D\}$.

A run is considered successful if
\begin{equation*}
\label{eq: success_rate}
|\CE(\conspoint{\rho_t}) - \CE(\globmin)| < 0.25,
\end{equation*}
which ensures that the consensus point lies within the basin of attraction of the global minimizer, i.e., in the correct valley of the objective landscape.

We begin in Figure~\ref{fig:heatmap_rastr4D_anisod1} with the results for the standard Rastrigin function~\eqref{eq:rastrigin}, whose level sets are aligned with the Cartesian axes.
The corresponding heatmap shows high success rates over a wide range of values of the hyperparameter $\alpha$ and the number of particles $N$.
We then display the results for rotated versions of the objective with different rotation matrices $R$~\eqref{eq:rotated_rastrigin}, constructed such that the intrinsic dimension increases $\intrd = 2,3,4$, corresponding to a progressively stronger coupling between the variables.
As the intrinsic dimension increases, the level sets become increasingly misaligned with the coordinate axes. This results in a degradation of the success rates. Already for $\intrd = 2$ a noticeable drop is observed (Figure~\ref{fig:heatmap_rastr4D_anisod2}), which becomes more pronounced for $\intrd = 3$ (Figure~\ref{fig:heatmap_rastr4D_anisod3}), and is most significant in the fully non-separable case $\intrd = 4$ (Figure~\ref{fig:heatmap_rastr4D_anisod4}).

\begin{figure}[ht!]
    \centering
    % Subfigure 1
    \begin{subfigure}[b]{0.24\textwidth}
        \centering
        \includegraphics[width=\textwidth]{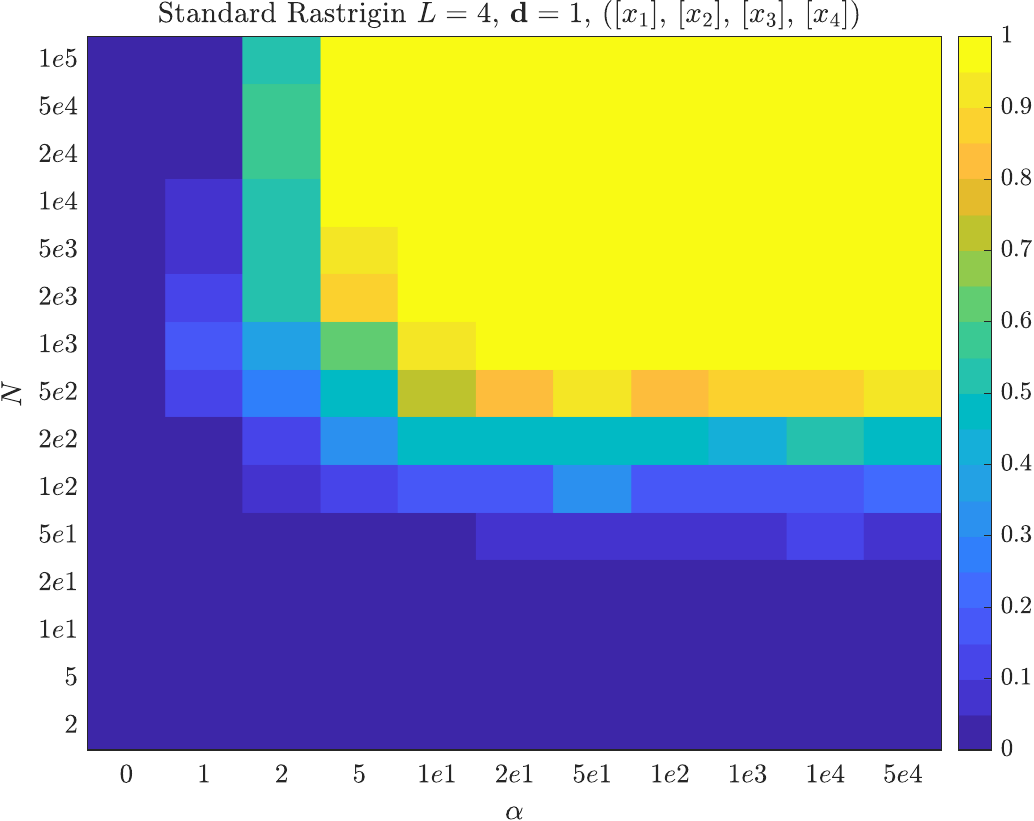}
        \caption{$\mathbf{d}=1$}
        \label{fig:heatmap_rastr4D_anisod1}
    \end{subfigure}
    \hfill
    % Subfigure 2
       \begin{subfigure}[b]{0.24\textwidth}
        \centering
        \includegraphics[width=\textwidth]{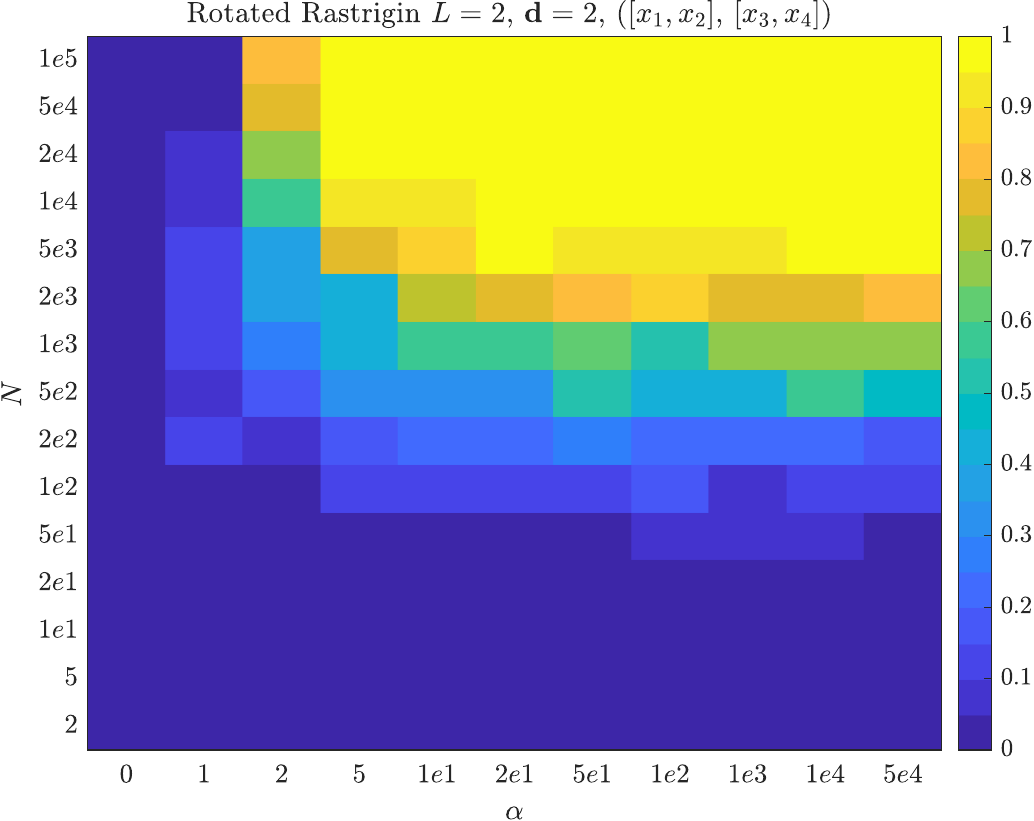}
        \caption{$\mathbf{d}=2$}
        \label{fig:heatmap_rastr4D_anisod2}
    \end{subfigure}
    \hfill
    % Subfigure 3
    \begin{subfigure}[b]{0.24\textwidth}
        \centering
        \includegraphics[width=\textwidth]{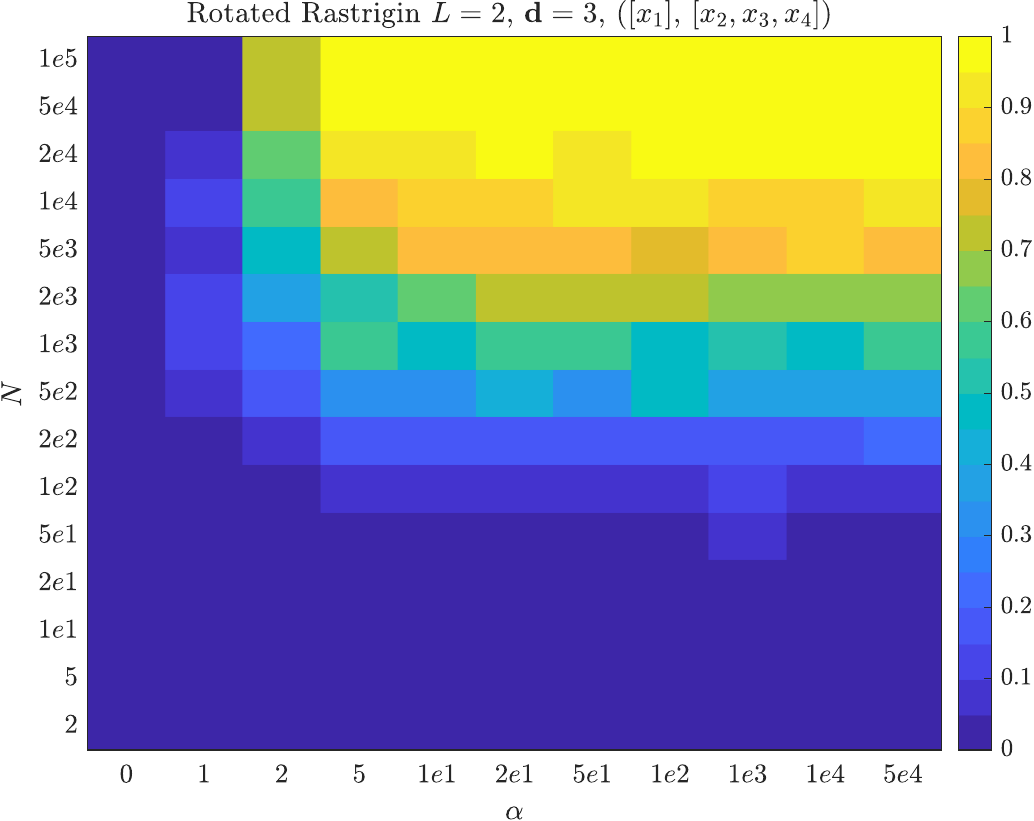}
        \caption{$\mathbf{d}=3$.}
        \label{fig:heatmap_rastr4D_anisod3}
    \end{subfigure}
    \hfill
    % Subfigure 4
    \begin{subfigure}[b]{0.24\textwidth}
        \centering
        \includegraphics[width=\textwidth]{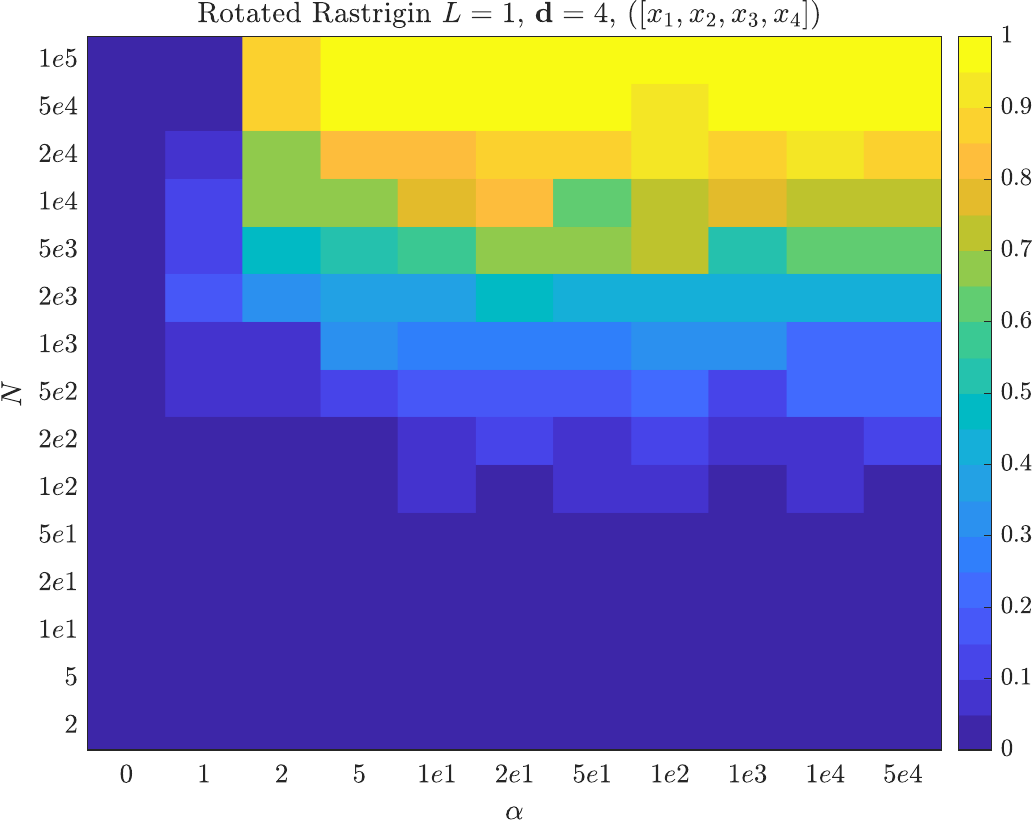}
        \caption{$\mathbf{d}= D= 4$.}
        \label{fig:heatmap_rastr4D_anisod4}
    \end{subfigure}

    \caption{
    Success rate of the CBO method with anisotropic noise applied to different versions of the Rastrigin function in dimension $D=4$. Each heatmap shows the success rate as a function of $\alpha$ and $N$. Results are averaged over 100 runs. The progressive deterioration of the success rate can be observed when comparing the images from Figure \ref{fig:heatmap_rastr4D_anisod1} to Figure \ref{fig:heatmap_rastr4D_anisod4}.}
    \label{fig:heatmaps_cbo_ANISO4D}
\end{figure}

\paragraph{Comparison for isotropic CBO.}
Next, let us, as a side remark, examine how the isotropic CBO algorithm \cite{pinnau2017consensus,carrillo2018analytical,fornasier2024consensus} is affected by the level of separability of the objective function.
Unlike the anisotropic case, in which the noise has a directional structure, in the isotropic case the noise is uniform in all spatial directions.
One therefore does not expect that isotropic CBO can benefit from any additively separable structure.
Our goal is not to conduct a theoretical study of the isotropic dynamics, but rather to evaluate at the numerical level the method’s dependence on the separability of the objective function.

The experiments are performed for parameters
\begin{equation*}
    \lambda = 1, \sigma = \frac{\sqrt{1.6}}{2}, \quad T =5, \Delta t = 10^{-3}, \quad \rho_0 \sim \CN_D((3,\ldots,3)^T,20I_D),
\end{equation*} 
and the remaining setting is as before.

Figure~\ref{fig:heatmaps_cbo_ISO4D} shows that, in the isotropic case, the heatmaps of the standard Rastrigin function, with constant $c_D=10$, and its rotated version are essentially indistinguishable, confirming our expectation.
This suggests that the degree of separability of the objective function does not affect the performance of the isotropic CBO.
The reason is that the noise has a global and uniform structure, so it acts the same way in all directions: the alignment of the function with respect to the axes does not change its effect.

\begin{figure}[ht!]
    \centering
    % Empty slot 1
    \begin{subfigure}[b]{0.24\textwidth}
        \centering
        \phantom{\includegraphics[width=\textwidth]{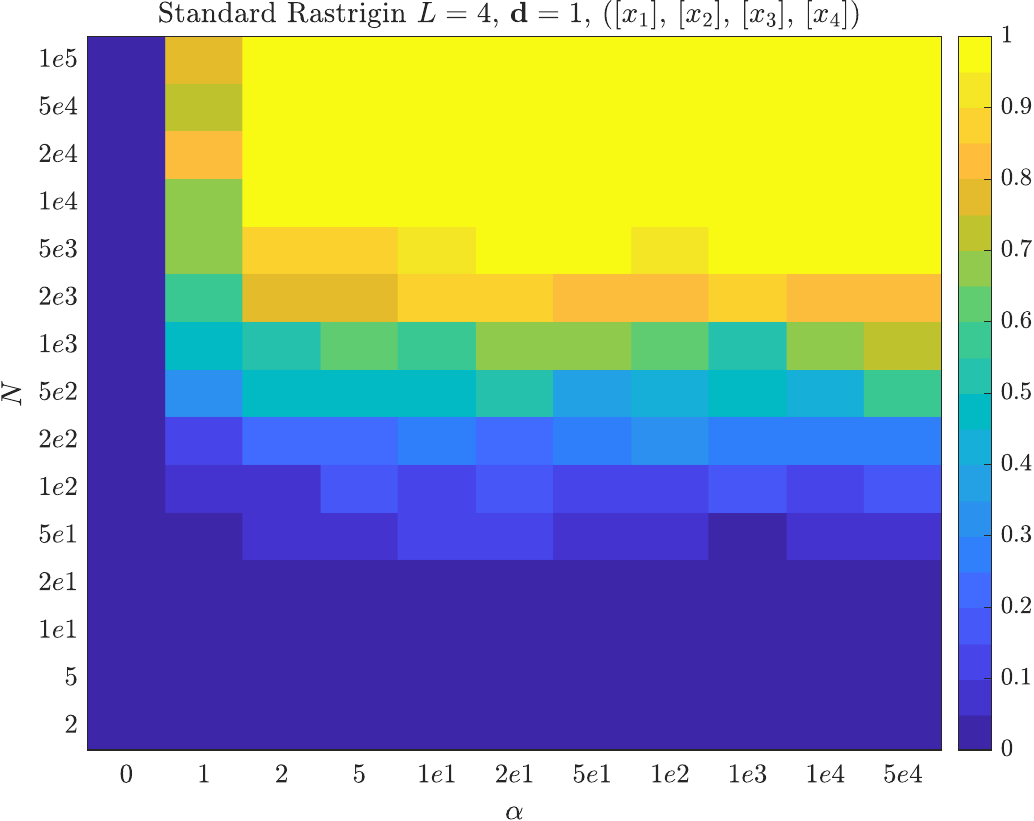}}
    \end{subfigure}
    \hfill
    % Subfigure 1
    \begin{subfigure}[b]{0.24\textwidth}
        \centering
        % Include your first heatmap image here
        \includegraphics[width=\textwidth]{Figures/ISOstandardD4d4.pdf}
        \caption{ $\mathbf{d}=1$.}
        \label{fig:heatmap_rastr_isod1}
    \end{subfigure}
    \hfill
    % Subfigure 2
       \begin{subfigure}[b]{0.24\textwidth}
        \centering
        % Include your third heatmap image here
        \includegraphics[width=\textwidth]{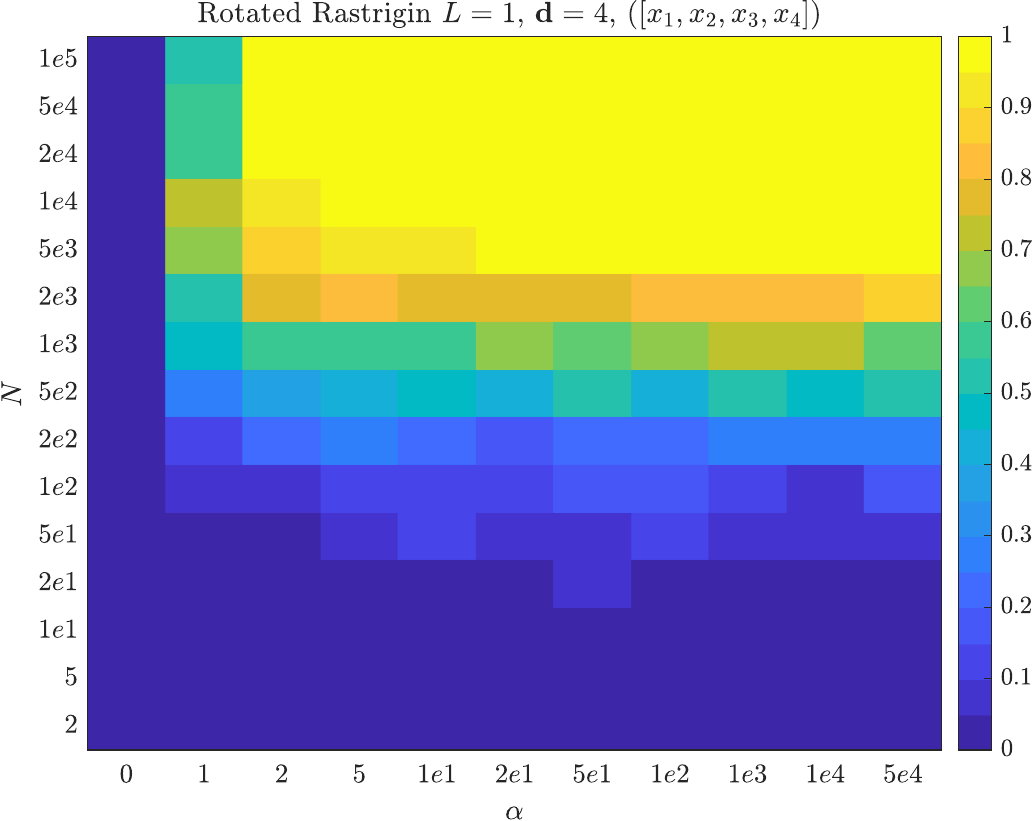}
        \caption{$\mathbf{d}=4$.}
        \label{fig:heatmap_rastr_isod4}
    \end{subfigure}
    \hfill
    % Empty slot 4
    \begin{subfigure}[b]{0.24\textwidth}
        \centering
        \phantom{\includegraphics[width=\textwidth]{Figures/ISOstandardD4d4.pdf}}
    \end{subfigure}

    \caption{
    Success rate of the CBO method with isotropic noise applied to different versions of the Rastrigin function in dimension $D=4$. Each heatmap shows the success rate as a function of $\alpha$ and $N$. Results are averaged over 100 runs.}
    \label{fig:heatmaps_cbo_ISO4D}
\end{figure}
It should be noted, however, that in the case of a separable function, the heatmap obtained with anisotropic CBO in Figure~\ref{fig:heatmap_rastr4D_anisod1} is better than that of its isotropic counterpart in Figure~\ref{fig:heatmap_rastr_isod4}. This is because the anisotropic formulation is able to exploit the structure of the function, adapting to the different directions. The isotropic formulation, on the other hand, precisely because of how it is constructed, fails to capture this characteristic and is therefore less effective in this case. This comparison is obtained after a proper tuning of the parameters for both formulations.

\subsubsection{Experiments for dimension \texorpdfstring{$D=8$}{8}}
\label{sec:numerics_intrd_D8}
We now repeat the experiments of the previous section in the higher dimension $D = 8$ using parameters
\begin{equation*}
    \lambda = 1, \sigma = 3, \quad T =10, \Delta t = 10^{-3}, \quad \rho_0 \sim \CN_D((0.5,\ldots,0.5)^T,4I_D).
\end{equation*} 
As before, we minimize the Rastrigin function \eqref{eq:rastrigin} and several of its rotated versions \eqref{eq:rotated_rastrigin} with anisotropic CBO.
Throughout this section, $(c_D)_k=2.5$ for all $k \in \{1,\dots,D\}$.

The results obtained in Figure~\ref{fig:heatmaps_cbo_ANISO8D} confirm the qualitative behavior already observed in lower dimension. In particular, the success rates are strongly influenced by the component structure of the objective function.
Indeed, for the standard Rastrigin function, which is fully additively separable, and for versions with small intrinsic dimension, there are relatively larger regions of success. In contrast, an increase in intrinsic dimensionality leads to a progressive deterioration in performance.

\begin{figure}[ht!]
    \centering
    % Subfigure 1
    \begin{subfigure}[b]{0.24\textwidth}
        \centering
        \includegraphics[width=\textwidth]{Figures/standardD8d4.pdf}
        \caption{$\mathbf{d}=1$}
        \label{fig:heatmap_rastr8D_anisod1}
    \end{subfigure}
    \hfill
    % Subfigure 2
       \begin{subfigure}[b]{0.24\textwidth}
        \centering
        \includegraphics[width=\textwidth]{Figures/rotatedD8d2.pdf}
        \caption{$\mathbf{d}=2$}
        \label{fig:heatmap_rastr8D_anisod2}
    \end{subfigure}
    \hfill
    % Subfigure 3
    \begin{subfigure}[b]{0.24\textwidth}
        \centering
        \includegraphics[width=\textwidth]{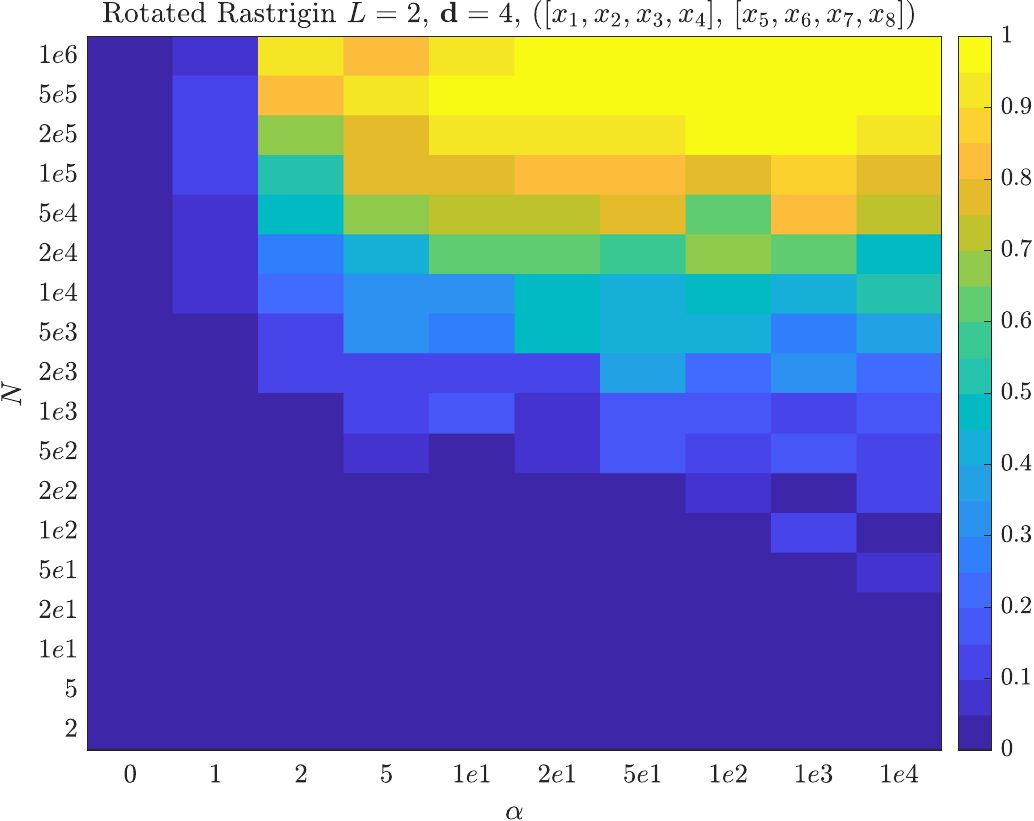}
        \caption{$\mathbf{d}=4$.}
        \label{fig:heatmap_rastr8D_anisod4}
    \end{subfigure}
    \hfill
    % Subfigure 4
    \begin{subfigure}[b]{0.24\textwidth}
        \centering
        \includegraphics[width=\textwidth]{Figures/rotatedD8d7.pdf}
        \caption{$\mathbf{d}= 7$.}
        \label{fig:heatmap_rastr8D_anisod7}
    \end{subfigure}

    \caption{
    Success rate of the CBO method with anisotropic noise applied to different versions of the Rastrigin function in dimension $D=8$. Each heatmap shows the success rate as a function of $\alpha$ and $N$. Results are averaged over 50 runs. The progressive deterioration of the success rate can be observed when comparing the images from Figure \ref{fig:heatmap_rastr8D_anisod1} to Figure \ref{fig:heatmap_rastr8D_anisod7}.}
    \label{fig:heatmaps_cbo_ANISO8D}
\end{figure}

\paragraph{Comparison for isotropic CBO.}
We now present again also the numerical results obtained in dimension $D=8$ for isotropic CBO.
The parameters are set to 
\begin{equation*}
    \lambda = 1, \sigma = 1.5, \quad T =10, \Delta t = 10^{-3}, \quad \rho_0 \sim \CN_D((0.5,\ldots,0.5)^T,4I_D).
\end{equation*} 
We minimize the Rastrigin function~\eqref{eq:rastrigin} and its rotations. Figure~\ref{fig:heatmaps_cbo_ISO8D} shows that the heatmaps for the standard and rotated Rastrigin functions remain indistinguishable, confirming that the performance of the isotropic CBO is not influenced by the separability properties of the objective function. Thus, for the isotropic CBO, success rates depend primarily on the choice of algorithmic parameters rather than on the structure of the objective function. 

\begin{figure}[ht!]
    \centering
    % Empty slot 1
    \begin{subfigure}[b]{0.24\textwidth}
        \centering
        \phantom{\includegraphics[width=\textwidth]{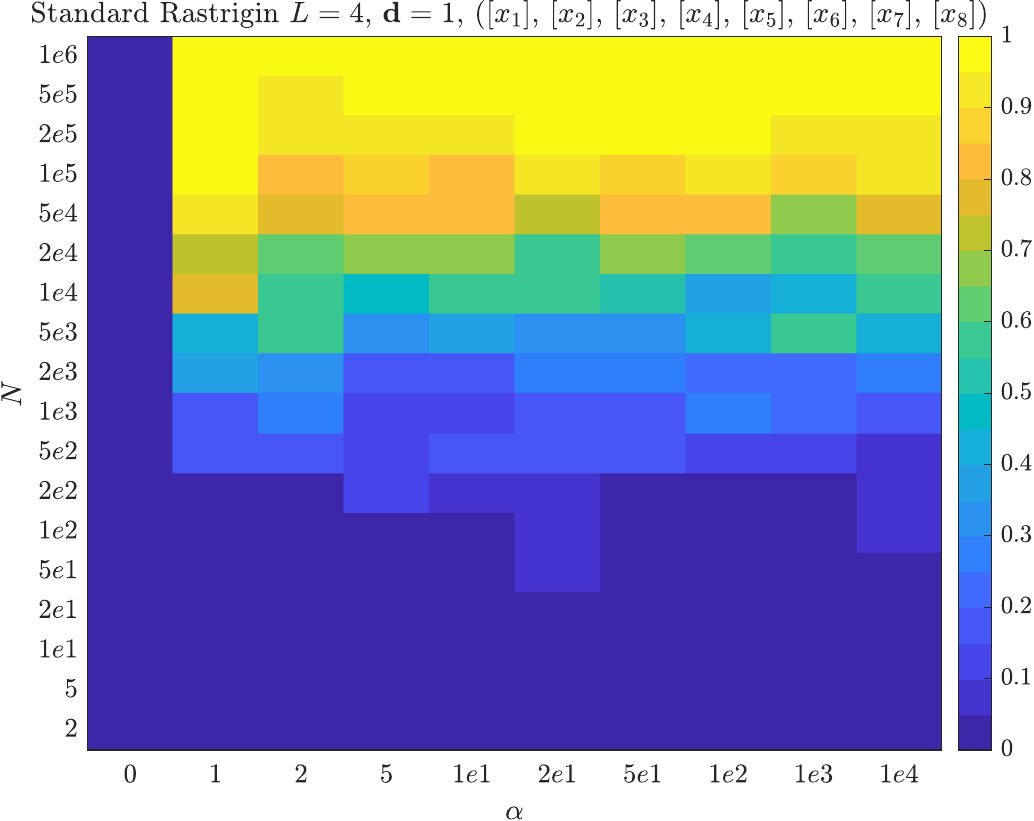}}
    \end{subfigure}
    \hfill
    % Subfigure 1
    \begin{subfigure}[b]{0.24\textwidth}
        \centering
        % Include your first heatmap image here
        \includegraphics[width=\textwidth]{Figures/ISOstandardD8d2.pdf}
        \caption{ $\mathbf{d}=1$.}
        \label{fig:heatmap_rastr_isod1}
    \end{subfigure}
    \hfill
    % Subfigure 2
       \begin{subfigure}[b]{0.24\textwidth}
        \centering
        % Include your third heatmap image here
        \includegraphics[width=\textwidth]{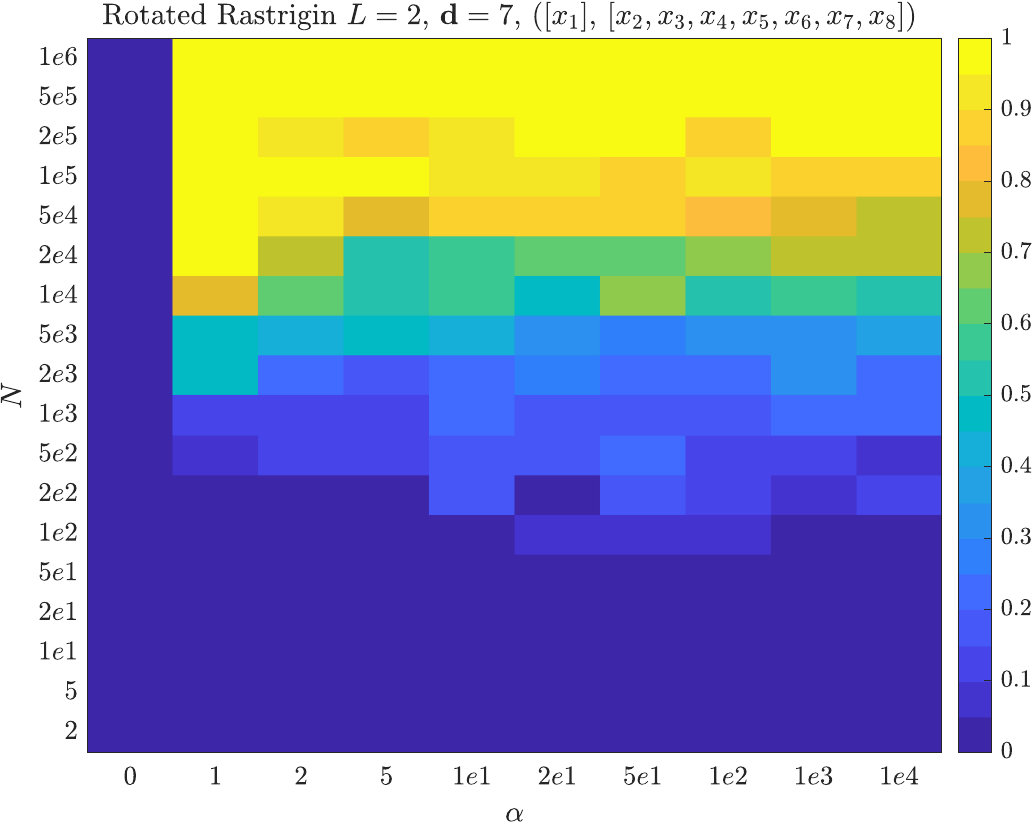}
        \caption{$\mathbf{d}=7$.}
        \label{fig:heatmap_rastr_isod2}
    \end{subfigure}
    \hfill
    % Empty slot 4
    \begin{subfigure}[b]{0.24\textwidth}
        \centering
        \phantom{\includegraphics[width=\textwidth]{Figures/ISOstandardD8d2.pdf}}
    \end{subfigure}

    \caption{
    Success rate of the CBO method with isotropic noise applied to different versions of the Rastrigin function in dimension $D=8$. Each heatmap shows the success rate as a function of $\alpha$ and $N$. Results are averaged over 50 runs.}
    \label{fig:heatmaps_cbo_ISO8D}
\end{figure}

\subsection{Influence of the complexity of the objective function within the separable components}
\label{sec:numerics_cD}

In this section, we investigate the influence of the complexity and the level of non-convexity of the objective function within the separable components on the performance of the anisotropic CBO algorithm.
To do so, we compare success rates for a rotated version of the Rastrigin function for different values of the parameters~$(c_D)_k$. We recall that the coefficients $(c_D)_k$ in \eqref{eq:rastrigin} regulate the magnitude of the oscillatory component of the energy landscape. We showcase experiments in $D=4$ and $D=8$.

\subsubsection{Experiments for dimension \texorpdfstring{$D=4$}{4}}
\label{sec:numerics_cD_D4}

The experiments in dimension $D=4$ are performed using the parameters
\begin{equation*}
    \lambda = 1, \sigma = \sqrt{1.6}, \quad T =5, \Delta t = 10^{-3}, \quad \rho_0 \sim \CN_D((3,\ldots,3)^T,20I_D).
\end{equation*}
In Figure~\ref{fig:heatmaps_cbo_ANISO4D_cD}, we report the success rates of the CBO method with anisotropic noise for the rotated Rastrigin function~\eqref{eq:rotated_rastrigin} with fixed intrinsic dimension but several varying coefficients $(c_D)_k$.

The success criterion is the same as in Section~\ref{sec:numerics_intrd_D4}, namely
\begin{equation*}
    |\CE(\conspoint{\rho_t})-\CE(\globmin)|<0.25.
\end{equation*}

The results show that an increase in the oscillation amplitudes $(c_D)_k$ generally leads to a decrease in the success rates of the anisotropic CBO algorithm. In particular, Figure~\ref{fig:heatmap_rastr4D_cD_anisod1} exhibits the best performance, while Figures~\ref{fig:heatmap_rastr4D_cD_anisod2} and~\ref{fig:heatmap_rastr4D_cD_anisod3} show a gradual reduction of the convergence region as the oscillatory amplitudes increase. As explained in Remark~\ref{rem:crucial_remark}, larger oscillations produce a more complex local geometry in the separable components, which affects the constants entering the expression for $\alphazerol$ in~\eqref{eq:alphazerol_simplified}.
Interestingly, Figure~\ref{fig:heatmap_rastr4D_cD_anisod4}, in which the coefficient $(c_D)_1=20$ (hence, substantially worse than in Figure~\ref{fig:heatmap_rastr4D_cD_anisod3}), while the remaining coefficients are $(c_D)_2=\cdots= (c_D)_4=10$, produces a heatmap comparable (in fact, visually indistinguishable) to that of Figure~\ref{fig:heatmap_rastr4D_cD_anisod3}. This, however, is in line with our theoretical observations.
The complexity of the objective function is determined by an interplay between the dimensionality of the component and its local complexity.
Here, despite the worsening of the complexity in the first component (by increasing the value of $(c_D)_1$ from $10$ to $20$), this does not affect the overall success rate, since the critical component is the second one (due to it having a significantly bigger dimensionality $d_2=3>1=d_1$).

Thus, the experiments show that the difficulty of the optimization problem does not depend solely on the intrinsic dimension~$\intrd$, but also on the geometric properties of the individual components~$\CE_\ell$. It follows that two objective functions with identical separability structure may exhibit different convergence behaviors, depending on the local complexity of their respective separable components.

\begin{figure}[ht!]
    \centering
    % Subfigure 1
    \begin{subfigure}[b]{0.24\textwidth}
        \centering
        \includegraphics[width=\textwidth]{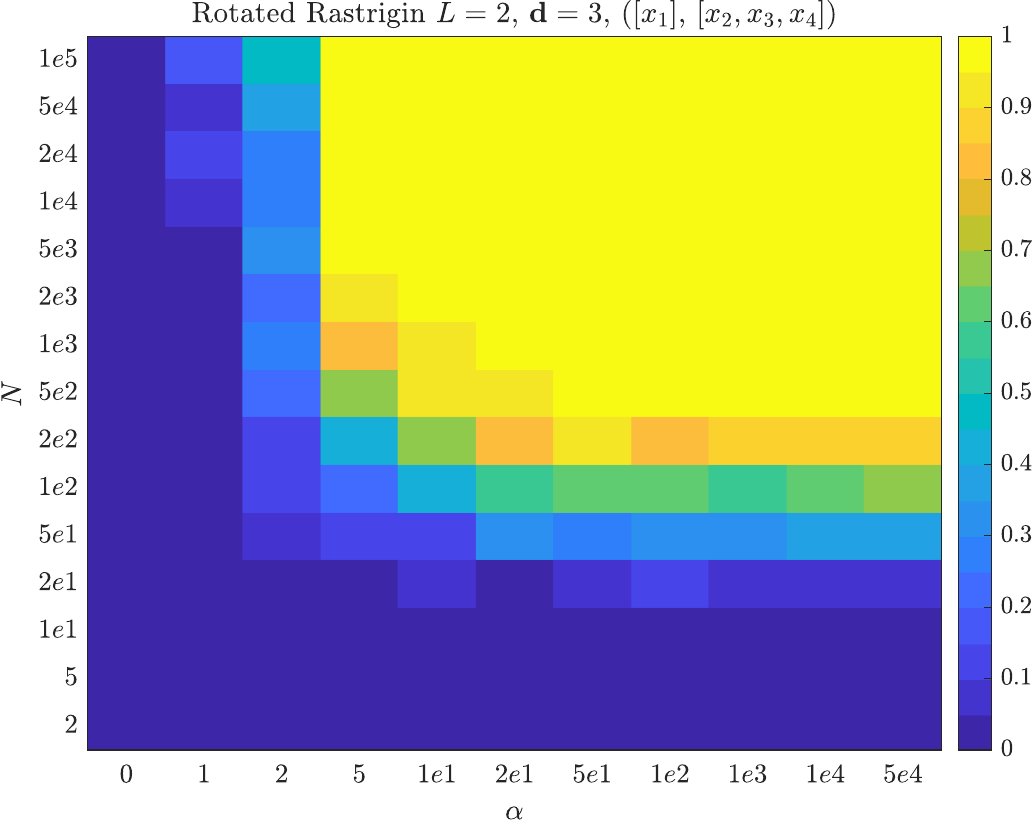}
        \caption{$\mathbf{d}=3$ with $(c_D)_1=10$ and $(c_D)_2=(c_D)_3=(c_D)_4=0.5$.}
        \label{fig:heatmap_rastr4D_cD_anisod1}
    \end{subfigure}
    \hfill
    % Subfigure 2
       \begin{subfigure}[b]{0.24\textwidth}
        \centering
        \includegraphics[width=\textwidth]{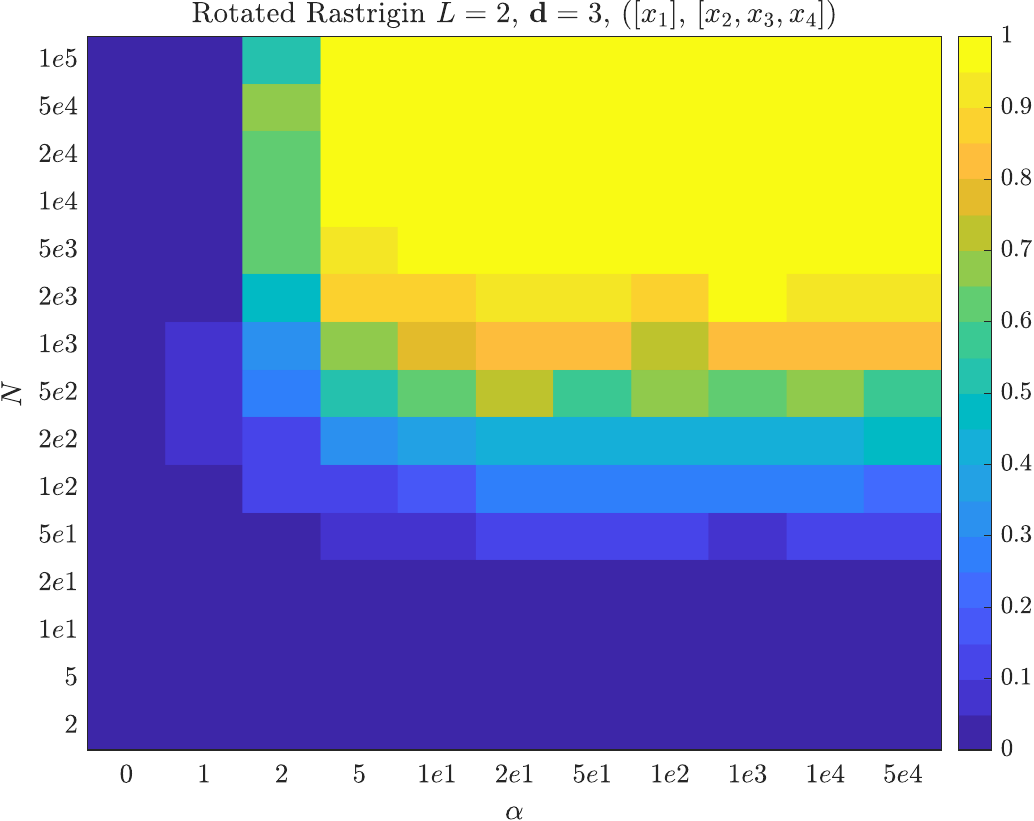}
        \caption{$\mathbf{d}=3$ with $(c_D)_1=10$ and $(c_D)_2=(c_D)_3=(c_D)_4=2.5$.}
        \label{fig:heatmap_rastr4D_cD_anisod2}
    \end{subfigure}
    \hfill
    % Subfigure 3
    \begin{subfigure}[b]{0.24\textwidth}
        \centering
        \includegraphics[width=\textwidth]{Figures/rotatedD4d3.pdf}
        \caption{$\mathbf{d}=3$ with $(c_D)_1=10$ and $(c_D)_2=(c_D)_3=(c_D)_4=10$.}
        \label{fig:heatmap_rastr4D_cD_anisod3}
    \end{subfigure}
    \hfill
    % Subfigure 4
    \begin{subfigure}[b]{0.24\textwidth}
        \centering
        \includegraphics[width=\textwidth]{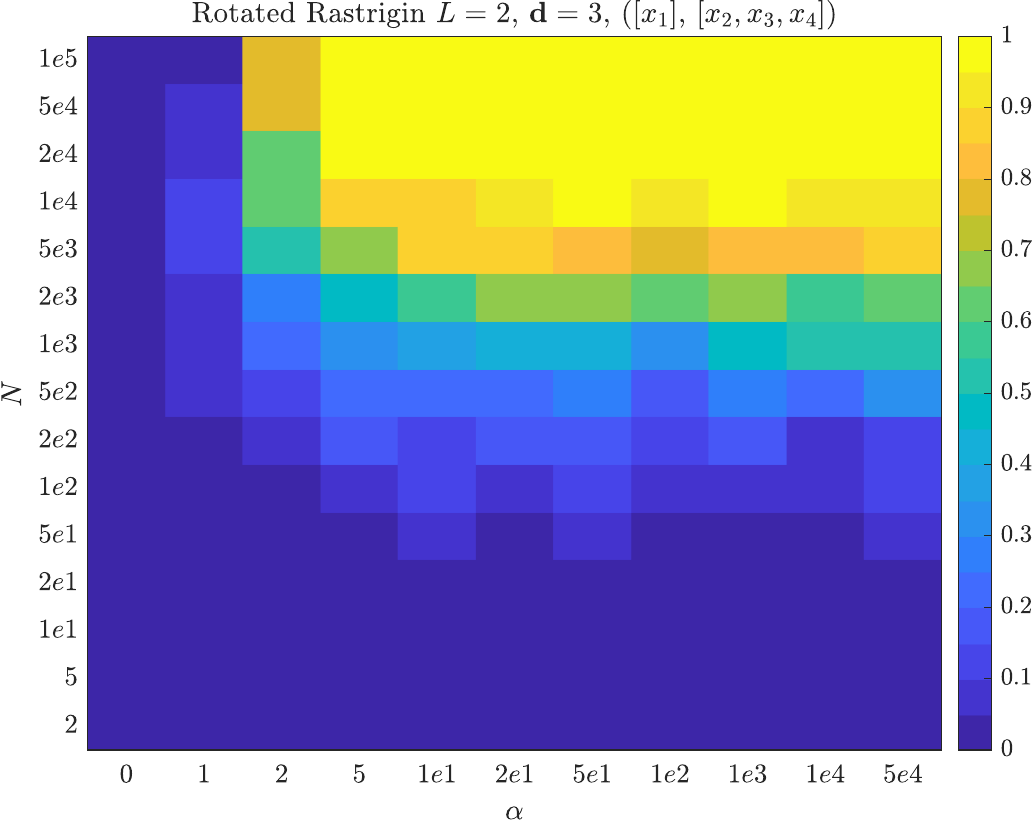}
        \caption{$\mathbf{d}=3$ with $(c_D)_1=20$ and $(c_D)_2=(c_D)_3=(c_D)_4=10$.}
        \label{fig:heatmap_rastr4D_cD_anisod4}
    \end{subfigure}

    \caption{
    Success rate of the CBO method with anisotropic noise applied to different versions of the Rastrigin function in dimension $D=4$. Each heatmap shows the success rate as a function of $\alpha$ and $N$. Results are averaged over 100 runs. The figures highlight a progressive reduction of the convergence region as the oscillatory amplitudes $(c_D)_k$, and thereby the complexity and level of non-convexity of the objective function, increase.}
    \label{fig:heatmaps_cbo_ANISO4D_cD}
\end{figure}

\subsubsection{Experiments for dimension \texorpdfstring{$D=8$}{8}}
\label{sec:numerics_cD_D8}
We now repeat the experiments of the previous section in the higher dimension $D=8$ using parameters
\begin{equation*}
    \lambda = 1, \sigma = 3, \quad T =10, \Delta t = 10^{-3}, \quad \rho_0 \sim \CN_D((0.5,\ldots,0.5)^T,4I_D).
\end{equation*}

As before, we consider rotated Rastrigin functions of the form~\eqref{eq:rotated_rastrigin} with fixed intrinsic dimension $\intrd=7$, while varying the oscillation amplitudes $(c_D)_k$.

The numerical results are reported in Figure~\ref{fig:heatmaps_cbo_ANISO8D_cD}.
As in the previous setting, the experiments show that the local complexity of the separable components has a strong influence on the success probability of anisotropic CBO. In particular, functions with smaller oscillatory parameters exhibit larger regions of successful convergence in the parameter space $(\alpha,N)$, whereas higher values of $(c_D)_k$ result in smaller regions of success. The behavior observed in Figure~\ref{fig:heatmap_rastr8D_anisod7_cD_setting_d} is consistent with the corresponding observations in dimension $D=4$ of Figure~\ref{fig:heatmap_rastr4D_cD_anisod4}.

\begin{figure}[ht!]
    \centering
    % Subfigure 1
    \begin{subfigure}[b]{0.24\textwidth}
        \centering
        \includegraphics[width=\textwidth]{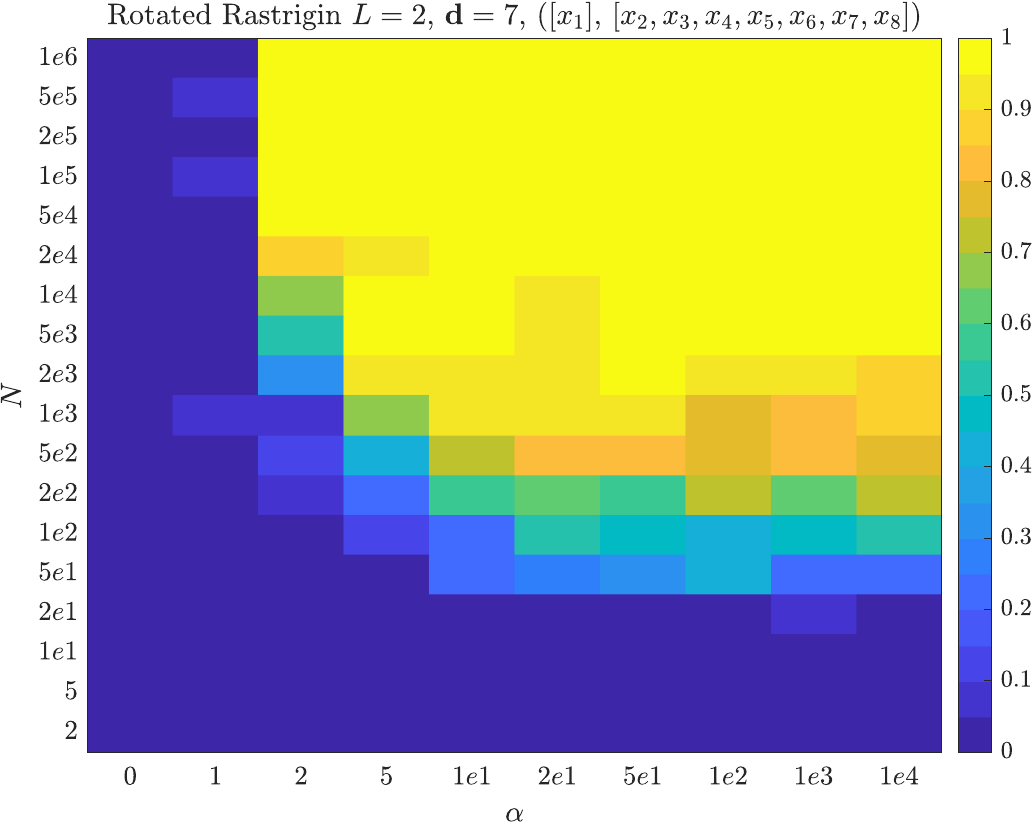}
        \caption{$\mathbf{d}=7$ with $(c_D)_1=2.5$ and $(c_D)_2=\cdots=(c_D)_8=0.5$.}
        \label{fig:heatmap_rastr8D_anisod7_cD_setting_a}
    \end{subfigure}
    \hfill
    % Subfigure 2
       \begin{subfigure}[b]{0.24\textwidth}
        \centering
        \includegraphics[width=\textwidth]{Figures/rotatedD8d7_cD_easier.pdf}
        \caption{$\mathbf{d}=7$ with $(c_D)_1=2.5$ and $(c_D)_2=\cdots=(c_D)_8=1.5$.}
        \label{fig:heatmap_rastr8D_anisod7_cD_setting_b}
    \end{subfigure}
    \hfill
    % Subfigure 3
    \begin{subfigure}[b]{0.24\textwidth}
        \centering
        \includegraphics[width=\textwidth]{Figures/rotatedD8d7.pdf}
        \caption{$\mathbf{d}=7$ with $(c_D)_1=2.5$ and $(c_D)_2=\cdots=(c_D)_8=2.5$.}
        \label{fig:heatmap_rastr8D_anisod7_cD_setting_c}
    \end{subfigure}
    \hfill
    % Subfigure 4
    \begin{subfigure}[b]{0.24\textwidth}
        \centering
        \includegraphics[width=\textwidth]{Figures/rotatedD8d7_cD_harder.pdf}
        \caption{$\mathbf{d}=7$ with $(c_D)_1=10$ and $(c_D)_2=\cdots=(c_D)_8=2.5$.}
        \label{fig:heatmap_rastr8D_anisod7_cD_setting_d}
    \end{subfigure}

    \caption{Success rate of the CBO method with anisotropic noise applied to different versions of the Rastrigin function in dimension $D=8$. Each heatmap shows the success rate as a function of $\alpha$ and $N$. Results are averaged over 50 runs. The figures highlight a progressive reduction of the convergence region as the oscillatory amplitudes $(c_D)_k$, and thereby the complexity and level of non-convexity of the objective function, increase.}
    \label{fig:heatmaps_cbo_ANISO8D_cD}
\end{figure}

%%%%%%%%%%%%%%%%%%%%%%%%%%%%%%%%%%%%%%%%%%%%%%%%%%
%%%%%%%%%% Section %%%%%%%%%%%%%%%%%%%%%%%%%%%%%%%
%%%%%%%%%%%%%%%%%%%%%%%%%%%%%%%%%%%%%%%%%%%%%%%%%%

\section{Conclusions}
\label{sec:conclusion}

In this work,
we have investigated, both theoretically and experimentally,
the performance of the anisotropic consensus-based optimization (CBO) method
in the scenario where the objective function possesses additional structure.
Specifically, we demonstrated that
anisotropic CBO automatically detects and exploits the additive separable structure of the objective
as described by Assumption~\ref{ass:add_sep},
i.e., when the objective can be decomposed additively into lower-dimensional components. 
Our analysis shows that this enables anisotropic CBO to provably mitigate the curse of dimensionality, which, for instance, isotropic CBO would have incurred into.
This is reflected by our main contributions through two aspects.
First, we demonstrate that the computational complexity depends only exponentially on the intrinsic dimension $\intrd$ of the objective function, rather than on the ambient dimension $D\gg \intrd$.
Second, we show that, rather than depending on a tractability condition of the full energy landscape, it depends only on tractability conditions on the lower-dimensional components, 
which allows for a more refined description of the objective function, as it captures the objective function landscape directly on the level of the individual components.
This is confirmed by our numerical experiments.

Our results highlight the effectiveness of anisotropic CBO for additively separable objective functions provided that there is sufficient alignment between the structure of the anisotropic noise in the algorithm and the separability structure of the objective itself.
This raises a crucial and interesting question for future research regarding an enhanced algorithm design of CBO:
How can we adapt the method such that it learns during the optimization how to most effectively explore the loss landscape by aligning on the fly the noise with the structure of the objective function.

%%%%%%%%%%%%%%%%%%%%%%%%%%%%%%%%%%%%%%%%%%%%%%%%%%
%%%%%%%%%% Acknowledgements %%%%%%%%%%%%%%%%%%%%%%
%%%%%%%%%%%%%%%%%%%%%%%%%%%%%%%%%%%%%%%%%%%%%%%%%%
\section*{Acknowledgements}
The authors, and in particular KR, would like to sincerely thank Timo Klock for discussions motivating and leading to this work. Moreover, the authors would like to thank Michael Herty, Lorenzo Pareschi, and Giacomo Borghi for valuable discussions during the preparation of the manuscript.

The work of SB is funded by the Deutsche Forschungsgemeinschaft (DFG, German Research Foundation) – 320021702/GRK2326 – Energy, Entropy, and Dissipative Dynamics (EDDy).
SB is a member of the INdAM Research National Group of Mathematical Physics (INdAM-GNFM).
KR acknowledges the financial support from the Technical University of Munich and the Munich Center for Machine Learning, where most of this work was done.
His work there has been funded by the German Federal Ministry of Education and Research and the Bavarian State Ministry for Science and the Arts.
The work of SV is supported by the European Union’s Horizon Europe research and innovation program under the Marie Sklodowska-Curie Doctoral Network Datahyking (Grant No. 101072546).
SV is a member of the INdAM Research National Group of Scientific Computing (INdAM-GNCS).

For the purpose of Open Access, the authors have applied a CC BY public copyright license to any Author Accepted Manuscript (AAM) version arising from this submission.

%%%%%%%%%%%%%%%%%%%%%%%%%%%%%%%%%%%%%%%%%%%%%%%%%%
%%%%%%%%%% Bibliography %%%%%%%%%%%%%%%%%%%%%%%%%%
%%%%%%%%%%%%%%%%%%%%%%%%%%%%%%%%%%%%%%%%%%%%%%%%%%
\bibliographystyle{abbrv}
\bibliography{bibliography.bib}

\end{document}